
\documentclass[12pt]{amsart}%
\usepackage{amscd}
\usepackage{amsfonts}
\usepackage{graphicx}
\usepackage{amsmath}
\usepackage{amssymb}
\usepackage[margin=1.2 in]{geometry}%
\setcounter{MaxMatrixCols}{30}
\providecommand{\U}[1]{\protect\rule{.1in}{.1in}}
\providecommand{\U}[1]{\protect\rule{.1in}{.1in}}
\providecommand{\U}[1]{\protect\rule{.1in}{.1in}}
\newtheorem{theorem}{Theorem}
\newtheorem{acknowledgement}[theorem]{Acknowledgement}

\newtheorem{condition}[theorem]{Condition}

\newtheorem{definition}[theorem]{Definition}
\newtheorem{example}[theorem]{Example}

\newtheorem{lemma}[theorem]{Lemma}

\newtheorem{proposition}[theorem]{Proposition}
\newtheorem{remark}[theorem]{Remark}

\begin{document}
\date{\today}
\title[Sampling and Interpolation]{Sampling and Interpolation on Some Nilpotent Lie Groups}
\author[V. Oussa]{Vignon Oussa}
\address{Dept.\ of Mathematics \& Computer Science\\
Bridgewater State University\\
Bridgewater, MA 02325 U.S.A.\\
 }
\email{vignon.oussa@bridgew.edu}
\keywords{sampling, interpolation, nilpotent Lie groups, representations}
\subjclass[2000]{22E25, 22E27}

\begin{abstract}
Let $N$ be a non-commutative, simply connected, connected, two-step nilpotent
Lie group with Lie algebra $\mathfrak{n}$ such that $\mathfrak{n=a\oplus
b\oplus z}$, $\left[  \mathfrak{a},\mathfrak{b}\right]  \subseteq
\mathfrak{z},$ the algebras $\mathfrak{a},\mathfrak{b,z}$ are abelian,
$\mathfrak{a}=\mathbb{R}\text{-span}\left\{  X_{1},X_{2},\cdots,X_{d}\right\}
,$ and $\mathfrak{b}=\mathbb{R}\text{-span}\left\{  Y_{1},Y_{2},\cdots
,Y_{d}\right\}  .$ Also, we assume that $\det\left[  \left[  X_{i}%
,Y_{j}\right]  \right]  _{1\leq i,j\leq d}$ is a non-vanishing homogeneous
polynomial in the unknowns $Z_{1},\cdots,Z_{n-2d}$ where $\left\{
Z_{1},\cdots,Z_{n-2d}\right\}  $ is a basis for the center of the Lie algebra.
Using well-known facts from time-frequency analysis, we provide some precise
sufficient conditions for the existence of sampling spaces with the
interpolation property, with respect to some discrete subset of $N$. The
result obtained in this work can be seen as a direct application of
time-frequency analysis to the theory of nilpotent Lie groups. Several
explicit examples are computed. This work is a generalization of recent
results obtained for the Heisenberg group by Currey, and Mayeli in
\cite{Currey}.

\end{abstract}
\maketitle
\tableofcontents



\vskip 0.5cm


\vskip 0.5cm

\section{Introduction}

Let $N$ be a locally compact group, and let $\Gamma$ be a discrete subset of
$N.$ Let $\mathbf{H}$ be a left-invariant closed subspace of $L^{2}\left(
N\right)  $ consisting of continuous functions. We call $\mathbf{H}$ a
\textbf{sampling space (}Section $2.6$ \cite{Fuhr cont}) with respect to
$\Gamma$ (or $\Gamma$-sampling space) if the following properties hold. First,
the restriction mapping $R_{\Gamma}:\mathbf{H}\rightarrow l^{2}\left(
\Gamma\right)  ,$ $R_{\Gamma}f=\left(  f\left(  \gamma\right)  \right)
_{\gamma\in\Gamma}$ is an isometry. Secondly, there exists a vector
$S\in\mathbf{H}$ such that for any vector $f\in\mathbf{H},$ we have the
following expansion
\[
f\left(  x\right)  =\sum_{\gamma\in\Gamma}f\left(  \gamma\right)  S\left(
\gamma^{-1}x\right)
\]
with convergence in the norm of $\mathbf{H}$. The vector $S$ is called a
\textbf{sinc-type} function, and if $R_{\Gamma}$ is surjective, we say that
the sampling space $\mathbf{H}$ has the \textbf{interpolation property}.

The simplest example of a sampling space with interpolation property over a
nilpotent Lie group is provided by the well-known Whittaker, Shannon,
Kotel'nikov Theorem (see Example $2.52$ \cite{Fuhr cont}) which we recall
here. Let $C\left(  \mathbb{R}\right)  $ be the vector space of complex-valued
continous functions on the real line, and let
\[
\mathbf{H=}\left\{  f\in L^{2}\left(  \mathbb{R}\right)  \cap C\left(
\mathbb{R}\right)  :\text{support }\widehat{f}\subseteq\left[
-0.5,0.5\right]  \right\}
\]
where $f\mapsto\widehat{f}$ is the Fourier transform of $f$ and is defined as
$\widehat{f}\left(  \xi\right)  =\int_{\mathbb{R}}f\left(  x\right)  e^{-2\pi
ix\xi}dx$ whenever $f\in L^{1}\left(  \mathbb{R}\right)  .$ Then $\mathbf{H}$
is a sampling space which has the interpolation property with associated
sinc-type function
\[
S\left(  x\right)  =\left\{
\begin{array}
[c]{c}%
\dfrac{\sin\pi x}{\pi x}\text{ if }x\neq0\\
1\text{ if }x=0
\end{array}
\right.  .
\]

To the best of our knowledge, the first example of a sampling space with
interpolation property on a non-commutative nilpotent Lie group, using the
Plancherel transform was defined over the three-dimensional Heisenberg Lie
group. This example is due to a remarkable result of Currey and Mayeli
\cite{Currey}. The specific definition of bandlimited spaces by the Plancherel
transform used in \cite{Currey}, was taken from \cite{Fuhr cont}, Chapter $6,$
where a very precise characterization of sampling spaces over the Heisenberg
group was provided. Moreover, sampling spaces using a similar definition of
bandlimitation were studied in \cite{oussa} and \cite{oussa1} for a class of
nilpotent Lie groups which contains the Heisenberg Lie groups. This class of
groups was first introduced by the author in \cite{oussa}. However, nothing
was said about the interpolation property of the sampling spaces described in
\cite{oussa}. In fact, the question of existence of sampling spaces with
interpolation property on some non-commutative nilpotent Lie groups is a
challenging problem which is the central focus of this paper.

Let $N$ be a simply connected, connected, two-step nilpotent Lie group with
Lie algebra $\mathfrak{n}$ of dimension $n$ satisfying the following conditions.

\begin{condition}
\label{conditiion} Assume that $\mathfrak{n=a\oplus b\oplus z}$, where
$\left[  \mathfrak{a},\mathfrak{b}\right]  \subseteq\mathfrak{z,}$
$\mathfrak{a},\mathfrak{b,z}$ are abelian algebras such that
\begin{align*}
\mathfrak{a}  &  =%
\mathbb{R}
\text{-span}\left\{  X_{1},X_{2},\cdots,X_{d}\right\}  ,\\
\mathfrak{b}  &  =%
\mathbb{R}
\text{-span}\left\{  Y_{1},Y_{2},\cdots,Y_{d}\right\}  ,\\
\mathfrak{z}  &  =%
\mathbb{R}
\text{-span}\left\{  Z_{1},Z_{2},\cdots,Z_{n-2d}\right\}  ,
\end{align*}
$d\geq1,$ $n>2d$ and
\begin{equation}
\det\left[
\begin{array}
[c]{cccc}%
\left[  X_{1},Y_{1}\right]  & \left[  X_{1},Y_{2}\right]  & \cdots & \left[
X_{1},Y_{d}\right] \\
\left[  X_{2},Y_{1}\right]  & \left[  X_{2},Y_{2}\right]  & \cdots & \left[
X_{2},Y_{d}\right] \\
\vdots & \vdots & \cdots & \vdots\\
\left[  X_{d},Y_{1}\right]  & \left[  X_{d},Y_{2}\right]  & \cdots & \left[
X_{d},Y_{d}\right]
\end{array}
\right]  \label{matrix}%
\end{equation}
is a non-vanishing homogeneous polynomial in the unknowns $Z_{1}%
,\cdots,Z_{n-2d}.$
\end{condition}

We remark that the entries of the matrix $\left[  \left[  X_{i},Y_{j}\right]
\right]  _{1\leq i,j\leq d}$ are linear combinations of a basis of the
commutator ideal of $\left[  \mathfrak{n},\mathfrak{n}\right]  $ which can be
taken to be a subset of $\left\{  Z_{1},Z_{2},\cdots,Z_{n-2d}\right\}  .$ The
object described in (\ref{matrix}) is then obtained by formally computing the
determinant in the unknowns $Z_{1},\cdots,Z_{n-2d}.$ Also, given a Lie algebra
$\mathfrak{n}$ which satisfies all assumptions in Condition \ref{conditiion},
it is worth mentioning that since we require $n-2d$ to be positive, then
$\dim\mathfrak{z}=n-2d\geq1$ and $\mathfrak{n}$ must necessarily be non-abelian.

One very appealing fact about these groups is the following. The
infinite-dimensional irreducible representations of any group satisfying the
conditions given above are related to the well-known Schr\"{o}dinger
representations \cite{oussa, oussa1}. Thus, the advantage of working with this
class of groups is that, we are able to exploit well-known theorems from
time-frequency analysis.

Let $N$ be a nilpotent Lie group satisfying Condition \ref{conditiion}. We
deal with the existence of left-invariant subspaces of $L^{2}\left(  N\right)
$ which are sampling spaces which have the interpolation property. More
precisely, we investigate conditions under which sampling provides an
orthonormal basis which is generated by shifting a single function. The work
presented here provides a natural generalization of recent results obtained
for the Heisenberg group in \cite{Currey}. We offer precise and explicit
sufficient conditions for sampling spaces, which also have the interpolation
property with respect to some discrete set $\Gamma\subset N$.

We organize this paper as follows. The second section deals with some
preliminary results which can be found in \cite{oussa,oussa1,Corwin}. In the
third section, we introduce a natural notion of bandlimitation for the class
of groups considered, and we state the main results (Theorem \ref{main} and
Theorem \ref{steps}) of the paper. In the fourth section, we prove results
related to sampling and frames for the class of groups considered here. The
results obtained in the fourth section are crucial for the proofs of Theorem
\ref{main} and Theorem \ref{steps} which are provided in the last section.
Finally, explicit examples are computed.

\section{Preliminaries}

Let us start by setting up some notation. In this paper, all representations
are strongly continuous and unitary, unless we state otherwise. All sets are
measurable, and given two equivalent unitary representations $\tau$ and $\pi,$
we write $\tau\cong\pi.$ We also use the same notation for isomorphic Hilbert
spaces. The characteristic function of a set $\mathbf{E}$ is written as
$\chi_{\mathbf{E}},$ and the cardinal number of a set $\mathbf{I}$ is denoted
by $\mathrm{card}\left(  \mathbf{I}\right)  .$ $V^{\ast}$ stands for the dual
vector space of a vector space of $V.$ Let $v$ be a vector in $%
\mathbb{R}
^{n}$. $v^{tr}$ stands for the transpose of the vector $v.$ The Fourier
transform of a suitable function $f$ defined over a commutative domain is
written as $\widehat{f},$ and the conjugate of a complex number $z$ is denoted
$\overline{z}.$ The general linear group of $\mathbb{R}^{n}$ is denoted
$GL_{n}\left(  \mathbb{R}\right)  .$ Let $v,w$ be two vectors in some Hilbert
space. We write $v\perp w$ to denote that the vectors are orthogonal to each
other with respect to the inner product which the given Hilbert space is
endowed with.

Now, we will provide a short introduction to the theory of direct integrals
which is also nicely exposed in Section $3.3$ of \cite{Fuhr cont}. Let
$\left\{  H_{\alpha}\right\}  _{\alpha\in A}$ be a family of separable Hilbert
spaces indexed by a set $A.$ Let $\mu$ be a measure defined in $A.$ We define
the direct integral of this family of Hilbert spaces with respect to $\mu$ as
the space which consists of functions $f$ defined on the parameter space $A$
such that $f\left(  \alpha\right)  $ is an element of $H_{\alpha}$ for each
$\alpha\in A$, and
\[
\int_{A}\left\Vert f\left(  \alpha\right)  \right\Vert _{H_{\alpha}}^{2}%
d\mu\left(  \alpha\right)  <\infty
\]
with some additional measurability conditions which we will clarify. A family
of separable Hilbert spaces $\left\{  H_{\alpha}\right\}  _{\alpha\in A}$
indexed by a Borel set $A$ is called a field of Hilbert spaces over $A.$ Next,
a map
\[
f:A\rightarrow%
{\displaystyle\bigcup\limits_{\alpha\in A}}
H_{\alpha}\text{ such that }f\left(  \alpha\right)  \in H_{\alpha}%
\]
is called a vector field on $A.$ A measurable field of Hilbert spaces over the
indexing set $A$ is a field of Hilbert spaces $\left\{  H_{\alpha}\right\}
_{\alpha\in A}$ together with a countable set $\left\{  e_{j}\right\}  _{j}$
of vector fields such that

\begin{enumerate}
\item the functions $\alpha\mapsto\left\langle e_{j}\left(  \alpha\right)
,e_{k}\left(  \alpha\right)  \right\rangle _{H_{\alpha}}$ are measurable for
all $j,k,$

\item the linear span of $\left\{  e_{k}\left(  \alpha\right)  \right\}  _{k}$
is dense in $H_{\alpha}$ for each $\alpha\in A.$
\end{enumerate}

The direct integral of the spaces $H_{\alpha}$ with respect to the measure
$\mu$ is denoted by
\[
\int_{A}^{\oplus}H_{\alpha}d\mu\left(  \alpha\right)
\]
and is the space of measurable vector fields $f$ on $A$ such that
\[
\int_{A}\left\Vert f\left(  \alpha\right)  \right\Vert _{H_{\alpha}}^{2}%
d\mu\left(  \alpha\right)  <\infty.
\]
The inner product for this Hilbert space is naturally obtained as follows. For
$f,g\in\int_{A}^{\oplus}H_{\alpha}d\mu\left(  \alpha\right)  ,$%
\[
\left\langle f,g\right\rangle =\int_{A}\left\langle f\left(  \alpha\right)
,g\left(  \alpha\right)  \right\rangle _{H_{\alpha}}d\mu\left(  \alpha\right)
.
\]
This theory of direct integrals will play an important role in the definition
of bandlimited spaces in our work.

Let $N$ be a non-commutative connected, simply connected nilpotent Lie group
with Lie algebra $\mathfrak{n}$ over the reals with some additional
assumptions described in Condition \ref{conditiion}.

Notice that if $\mathfrak{n}$ is the three-dimensional Heisenberg Lie algebra
which is spanned by vectors $X,Y,Z$ such that $\left[  X,Y\right]  =Z,$ then
we may define
\[
\mathfrak{a=}\text{ }%
\mathbb{R}
X,\text{ }\mathfrak{b=}\text{ }%
\mathbb{R}
Y\text{ and }\mathfrak{z=}\text{ }%
\mathbb{R}
Z.
\]
Although this is trivial, we make the following observation: $\det\left[
X,Y\right]  =Z$ is a non-vanishing homogeneous polynomial in the variable $Z.$
Therefore, the class of groups satisfying the conditions described above
contains groups which can be seen as some generalization of the Heisenberg Lie
groups. Let
\[
\mathfrak{B}=\left\{  T_{1},T_{2},\cdots,T_{n}\right\}
\]
be a basis for the Lie algebra $\mathfrak{n}.$ We say that $\mathfrak{B}$ is a
\textbf{strong Malcev basis }(see Page $10$ \cite{Corwin}\textbf{)} if and
only if for each $1\leq j\leq n$ the real span of
\[
\left\{  T_{1},T_{2},\cdots,T_{j}\right\}
\]
is an ideal of $\mathfrak{n.}$ For the class of groups considered in this
work, in order to obtain a strong Malcev basis, it suffices to define
$Z_{k}=T_{k}\text{ if }1\leq k\leq n-2d.$ Next, if $n-2d+1\leq k\leq n-d,$
then $k=n-2d+j$ for some $j\in\left\{  1,2,\cdots,d\right\}  $ and
$T_{k}=Y_{j}.$ Finally, if $n-d+1\leq k\leq n$ then $k=n-d+j$ for
$j\in\left\{  1,2,\cdots,d\right\}  $ and in this case $T_{k}=X_{j}.$ Fixing
such a strong Malcev basis of the Lie algebra $\mathfrak{n}$, a typical
element of the Lie group $N$ is written as follows:
\[
\exp\left(  \sum_{k=1}^{n-2d}z_{k}Z_{k}\right)  \exp\left(  \sum_{k=1}%
^{d}y_{k}Y_{k}\right)  \exp\left(  \sum_{k=1}^{d}x_{k}X_{k}\right)  .
\]
The subgroup
\[
\exp\left(  \sum_{k=1}^{n-2d}\mathbb{R}Z_{k}\right)
\]
is the center of the Lie group $N$ and the subgroup
\[
\exp\left(  \sum_{k=1}^{n-2d}\mathbb{R}Z_{k}\right)  \exp\left(  \sum
_{k=1}^{d}\mathbb{R}Y_{k}\right)
\]
is a maximal normal abelian subgroup of $N.$ Moreover, $N$ is a step-two
nilpotent Lie group since the commutator ideal $\left[  \mathfrak{n,n}\right]
$ is central. Let us now collect some additional basic facts about groups
satisfying Condition \ref{conditiion}.

\begin{proposition}
\label{faithful}Let $N$ be a nilpotent Lie group satisfying the conditions
given above. There is a finite dimensional faithful representation of $N$ in
$GL\left(  n+1,%
\mathbb{R}
\right)  $ for $n\geq3.$
\end{proposition}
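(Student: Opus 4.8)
The plan is to construct the faithful representation explicitly using the standard embedding of a simply connected nilpotent Lie group into upper triangular matrices, adapted to the two-step structure provided by Condition \ref{conditiion}. First I would recall that since $\mathfrak{n}$ is nilpotent, the adjoint representation $\operatorname{ad}:\mathfrak{n}\to\mathfrak{gl}(\mathfrak{n})$ is a nilpotent representation by nilpotent endomorphisms, but it is not faithful (its kernel is the center $\mathfrak{z}$, which is nontrivial here since $\dim\mathfrak{z}=n-2d\geq 1$). To repair this, I would pass to the representation $\rho:\mathfrak{n}\to\mathfrak{gl}(\mathfrak{n}\oplus\mathbb{R})$ obtained by letting $\mathfrak{n}$ act on $\mathfrak{n}\oplus\mathbb{R}\cong\mathbb{R}^{n+1}$ via $\rho(X)(V,t) = (\operatorname{ad}(X)V + tX,\,0)$; equivalently, in block form $\rho(X)=\begin{pmatrix}\operatorname{ad}(X) & X\\ 0 & 0\end{pmatrix}$, where $X$ in the upper-right is the column vector of coordinates of $X$ in the basis $\mathfrak{B}$. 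This is the classical trick: $\rho$ is a Lie algebra homomorphism (one checks $[\rho(X),\rho(Y)]=\rho([X,Y])$ using the Jacobi identity for the $\operatorname{ad}$-block and the definition of $\operatorname{ad}$ for the mixed block), and $\rho$ is faithful because $\rho(X)(0,1)=(X,0)$, so $\rho(X)=0$ forces $X=0$.

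Next I would verify that $\rho(X)$ is a nilpotent matrix for every $X\in\mathfrak{n}$. Since $N$ is two-step, $[\mathfrak{n},\mathfrak{n}]\subseteq\mathfrak{z}$ and $[\mathfrak{z},\mathfrak{n}]=0$, so $\operatorname{ad}(X)^2=0$ for all $X$; a short computation then gives $\rho(X)^3=0$. Hence each $\rho(X)$ lies in a nilpotent subalgebra of $\mathfrak{gl}(n+1,\mathbb{R})$, and in fact with respect to the ordered strong Malcev basis $\{T_1,\dots,T_n\}$ (adjoined with the extra coordinate placed last) the matrices $\rho(X)$ are strictly upper triangular: the strong Malcev condition says $\mathbb{R}\text{-span}\{T_1,\dots,T_j\}$ is an ideal, so $\operatorname{ad}(X)$ maps the $j$-th basis vector into the span of $T_1,\dots,T_{j-1}$, i.e.\ $\operatorname{ad}(X)$ is strictly upper triangular, and the mixed column sits above the diagonal as well. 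Therefore $\exp$ is a polynomial diffeomorphism onto the closed subgroup $\exp(\rho(\mathfrak{n}))$ of unipotent upper triangular matrices in $GL(n+1,\mathbb{R})$.

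Finally I would assemble the group-level statement: define $\Phi:N\to GL(n+1,\mathbb{R})$ by $\Phi(\exp X)=\exp(\rho(X))$ for $X\in\mathfrak{n}$, extended to products via $\Phi(\exp X_1\cdots\exp X_k)=\exp(\rho(X_1))\cdots\exp(\rho(X_k))$. Because $N$ is simply connected and $\rho$ is a Lie algebra homomorphism, $\Phi$ is the unique Lie group homomorphism integrating $\rho$, and it is a well-defined smooth homomorphism independent of the chosen factorization. Its differential at the identity is $\rho$, which is injective; since $N$ is connected, $\ker\Phi$ is a closed normal subgroup whose Lie algebra is $\ker\rho=0$, so $\ker\Phi$ is discrete, and as $N$ is connected and simply connected with $\Phi$ having injective differential, $\ker\Phi$ is in fact trivial — $\Phi$ is faithful. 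I expect the only mildly delicate point to be the bookkeeping that shows $\rho(X)$ is genuinely strictly upper triangular in the chosen basis (so that the image really lands among unipotent matrices and $n\geq 3$ suffices, which here is automatic since $n>2d\geq 2$); everything else is the standard Birkhoff-type embedding argument, and no step requires more than the two-step bracket relations already recorded after Condition \ref{conditiion}.
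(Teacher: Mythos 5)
Your overall strategy --- represent $\mathfrak{n}$ on the $(n+1)$-dimensional space $\mathfrak{n}\oplus\mathbb{R}$ by an ``adjoint block plus translation column'' and then integrate using simple connectedness --- is in spirit the same as the paper's, but the specific map you write down is not a Lie algebra homomorphism, and the verification you assert goes through in fact fails. With $\rho(X)=\left[\begin{smallmatrix}\operatorname{ad}(X) & X\\ 0 & 0\end{smallmatrix}\right]$, the upper-right block of $[\rho(X),\rho(Y)]$ is $\operatorname{ad}(X)Y-\operatorname{ad}(Y)X=[X,Y]-[Y,X]=2[X,Y]$, whereas the upper-right block of $\rho([X,Y])$ is $[X,Y]$. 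Since $\mathfrak{n}$ is non-abelian, these differ on some pair $X,Y$, so $\rho$ does not respect brackets, and nothing downstream (nilpotency of the image, integration to $\Phi$, faithfulness of $\Phi$) can be run as stated. The remaining ingredients of your argument --- injectivity from $\rho(X)(0,1)=(X,0)$, strict upper-triangularity in the strong Malcev basis, and injectivity of $\exp$ on unipotent groups --- would all be fine if $\rho$ were a homomorphism, so the gap is localized but fatal as written; this is the well-known reason the naive map $X\mapsto(\operatorname{ad}(X),X)$ into $\mathfrak{gl}(\mathfrak{n})\ltimes\mathfrak{n}$ only works for abelian $\mathfrak{n}$.

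The error is repairable precisely because the group is two-step. Since $[X,Y]$ is central, $\operatorname{ad}([X,Y])=0$, so you may rescale the adjoint block: setting $\rho(X)=\left[\begin{smallmatrix}\tfrac12\operatorname{ad}(X) & X\\ 0 & 0\end{smallmatrix}\right]$ gives $[\rho(X),\rho(Y)]=\left[\begin{smallmatrix}\tfrac14\operatorname{ad}([X,Y]) & [X,Y]\\ 0 & 0\end{smallmatrix}\right]=\left[\begin{smallmatrix}0 & [X,Y]\\ 0 & 0\end{smallmatrix}\right]=\rho([X,Y])$, and this corrected $\rho$ is still injective and strictly upper triangular, after which your integration argument applies. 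This is essentially what the paper does in different clothing: it adjoins an outer derivation $A_{\alpha}$ acting with weight $\ln\alpha$ on $\mathfrak{a}\oplus\mathfrak{b}\oplus(\mathfrak{z}\ominus[\mathfrak{n},\mathfrak{n}])$ and weight $2\ln\alpha$ on $[\mathfrak{n},\mathfrak{n}]$, and then restricts $\operatorname{ad}_{\mathfrak{g}}$, $\mathfrak{g}=\mathfrak{n}\oplus\mathbb{R}A_{\alpha}$, to $\mathfrak{n}$; the last column of $\operatorname{ad}_{\mathfrak{g}}(X)$ is then $-\ln\alpha\,(X_{1}+2X_{2})$ (where $X=X_{1}+X_{2}$ is the decomposition along $\mathfrak{n}_{1}\oplus\mathfrak{n}_{2}$), and the weight $2$ on the commutator ideal is exactly what absorbs the factor-of-two discrepancy you ran into. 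You should either adopt that weighting or the $\tfrac12\operatorname{ad}$ rescaling; with the unweighted column your $\rho$ is not a representation.
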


\begin{proof}
Clearly if $n<3,$ then $\mathfrak{n}$ must be abelian. Thus, we must assume
that $n\geq3.$ First, let $\mathfrak{n}_{1}=\mathfrak{a\oplus b}\oplus\left(
\mathfrak{z}\ominus\left[  \mathfrak{n,n}\right]  \right)  $ and
$\mathfrak{n}_{2}=\left[  \mathfrak{n,n}\right]  \subseteq\mathfrak{z}$ such
that $\mathfrak{n=n}_{1}\oplus\mathfrak{n}_{2}.$ Let $\alpha$ be a positive
real number. Next, we define an element $A_{\alpha}$ in the outer derivation
of $\mathfrak{n}$ acting by a diagonalizable action such that $\left[
A_{\alpha},U\right]  =\ln\left(  \alpha\right)  U$ for all $U\in
\mathfrak{n}_{1}$ and $\left[  A_{\alpha},Z\right]  =2\ln\left(
\alpha\right)  Z$ for all $Z\in\mathfrak{n}_{2}.$ Using the Jacobi identity,
it is fairly easy to see that indeed $A_{\alpha}$ defines a derivation. Next,
we consider the linear adjoint representation of $\mathfrak{g=n}\oplus%
\mathbb{R}
A_{\alpha},$ $\mathrm{ad}:\mathfrak{g}\rightarrow\mathfrak{gl}\left(
\mathfrak{g}\right)  $ and we define $G=\exp\left(  \mathrm{ad}\left(
\mathfrak{g}\right)  \right)  \ $which is a subgroup of $GL\left(
\mathfrak{g}\right)  .$ Fixing a strong Malcev basis for the Lie algebra
$\mathfrak{n,}$ the adjoint representation of $G$ acting on the vector space
$\mathfrak{g}$ is a faithful representation. Thus, $G=\exp\left(
\mathrm{ad}\left(  \mathfrak{g}\right)  \right)  $ is a Lie subgroup of
$GL\left(  \mathfrak{g}\right)  \cong GL\left(  n+1,%
\mathbb{R}
\right)  $. Since $N$ is isomorphic to $\exp\left(  \mathrm{ad}\left(
\mathfrak{n}\oplus\left\{  0\right\}  \right)  \right)  ,$ then $\exp\left(
\mathrm{ad}\left(  \mathfrak{n}\oplus\left\{  0\right\}  \right)  \right)  $
is an isomorphic copy of the Lie group $N$ inside $GL\left(  n+1,%
\mathbb{R}
\right)  .$
\end{proof}

Next, in order to make this paper self-contained, we will revisit the
Plancherel theory for the class of groups considered in this paper. We start
by fixing a strong Malcev basis for the Lie algebra of $\mathfrak{n}$. The
exponential function takes the Lebesgue measure on $\mathfrak{n}$ to a left
Haar measure on $N$ (see Theorem $1.2.10$ in \cite{Corwin}). Since $N$ is a
nilpotent Lie group, according to the orbit method (see \cite{Corwin}) all
irreducible representations of $N$ are parametrized by the coadjoint orbit of
$N$ in $\mathfrak{n}^{\ast},$ and it is possible to construct a smooth
cross-section $\Sigma$ in a Zariski open subset $\Omega\ $of $\mathfrak{n}%
^{\ast}$ which is dense and $N$-invariant such that $\Sigma$ meets every
coadjoint orbit in $\Omega$ at exactly one point. Let $\mathcal{P}$ be the
Plancherel transform on $L^{2}\left(  N\right)  $ and $\mathcal{F}$ the
Fourier transform defined on $L^{2}(N)\cap L^{1}(N)$ by
\[
\mathcal{F}\left(  f\right)  \left(  \lambda\right)  =\int_{N}f\left(
n\right)  \pi_{\lambda}\left(  n\right)  dn,
\]
where $\left\{  \pi_{\lambda}:\lambda\in\Sigma\right\}  $ parametrizes up to a
null set the unitary dual of $N.$ In fact, the set $\Sigma$ can be chosen such
that for each $\lambda\in\Sigma,$ the corresponding irreducible representation
$\pi_{\lambda}$ is realized as acting in the Hilbert space $L^{2}\left(
\mathbb{R}^{d}\right)  $ where $d$ is half of the dimension of the coadjoint
orbit of $\lambda.$ Next, it is well-known that
\[
\mathcal{P}:L^{2}\left(  N\right)  \rightarrow\int_{\Sigma}^{\oplus}%
L^{2}\left(  \mathbb{R}^{d}\right)  \otimes L^{2}\left(  \mathbb{R}%
^{d}\right)  d\mu\left(  \lambda\right)  ,
\]
such that the Plancherel transform is the extension of the Fourier transform
to $L^{2}(N)$ inducing the equality
\[
\left\Vert f\right\Vert _{L^{2}\left(  N\right)  }^{2}=\int_{\Sigma}\left\Vert
\mathcal{P}\left(  f\right)  \left(  \lambda\right)  \right\Vert
_{\mathcal{HS}}^{2}\text{ }d\mu\left(  \lambda\right)  .
\]
We recall that $||\cdot||_{\mathcal{HS}}$ denotes the Hilbert-Schmidt norm on
$L^{2}\left(  \mathbb{R}^{d}\right)  \otimes L^{2}\left(  \mathbb{R}%
^{d}\right)  $ and that the Hilbert space tensor product $L^{2}\left(
\mathbb{R}^{d}\right)  \otimes L^{2}\left(  \mathbb{R}^{d}\right)  $ is
defined as the space of bounded linear operators $T:L^{2}\left(
\mathbb{R}^{d}\right)  \rightarrow L^{2}\left(  \mathbb{R}^{d}\right)  $ such
that
\[
||T||_{\mathcal{HS}}=\sum_{k\in I}\left\Vert Te_{k}\right\Vert _{L^{2}\left(
\mathbb{R}^{d}\right)  }^{2}%
\]
where $\left(  e_{k}\right)  _{k\in I}$ is an orthonormal basis of
$L^{2}\left(  \mathbb{R}^{d}\right)  .$ Given arbitrary $S,T\in L^{2}\left(
\mathbb{R}^{d}\right)  \otimes L^{2}\left(  \mathbb{R}^{d}\right)  ,$ the
inner product of the operators $S,T$ is:%
\[
\left\langle S,T\right\rangle _{_{\mathcal{HS}}}=\sum_{k\in I}\left\langle
Se_{k},Te_{k}\right\rangle _{L^{2}\left(  \mathbb{R}^{d}\right)  }.
\]
Also, it is useful to observe that the inner product of arbitrary rank-one
operators in $L^{2}\left(  \mathbb{R}^{d}\right)  \otimes L^{2}\left(
\mathbb{R}^{d}\right)  $ is given by
\[
\left\langle u\otimes v,w\otimes y\right\rangle _{\mathcal{HS}}=\left\langle
u,w\right\rangle _{L^{2}\left(  \mathbb{R}^{d}\right)  }\left\langle
v,y\right\rangle _{L^{2}\left(  \mathbb{R}^{d}\right)  }.
\]

Let $\lambda$ be a linear functional in $\mathfrak{n}^{\ast}.$ Put%
\[
\lambda_{k}=\lambda\left(  Z_{k}\right)  .
\]
Treating the $\lambda_{k}$ as unknowns, we define
\begin{equation}
B\left(  \lambda\right)  =\left[
\begin{array}
[c]{ccc}%
\lambda\left[  X_{1},Y_{1}\right]  & \cdots & \lambda\left[  X_{1}%
,Y_{d}\right] \\
\vdots & \ddots & \vdots\\
\lambda\left[  X_{d},Y_{1}\right]  & \cdots & \lambda\left[  X_{d}%
,Y_{d}\right]
\end{array}
\right]  \label{B}%
\end{equation}
which is a square matrix of order $d$. The entries in $B(\lambda)$ are linear
combinations of the unknowns $\lambda_{1},\cdots,\lambda_{n-2d}.$ Computing
the determinant of the matrix $B\left(  \lambda\right)  ,$ we obtain a
polynomial in the unknowns $\lambda_{1},\cdots,\lambda_{n-2d}.$ Thus,
$\det\left[  \left[  X_{i},Y_{j}\right]  \right]  _{1\leq i,j\leq d}$ is a
non-vanishing homogeneous polynomial in the unknowns $Z_{1},\cdots,Z_{n-2d}$
if and only if $\det B\left(  \lambda\right)  $ is non-vanishing homogeneous
polynomial in the unknowns $\lambda_{1},\cdots,\lambda_{n-2d}.$ Therefore, if
the assumptions of Conditions \ref{conditiion} are met, then for $\lambda
\in\mathfrak{n}^{\ast},$ $\det\left(  B\left(  \lambda\right)  \right)  $ is a
non-vanishing polynomials in the unknowns $\lambda_{1},\cdots,\lambda_{n-2d}.$

\begin{proposition}
\label{analysis}Let $\mathfrak{n}$ be a Lie algebra over $%
\mathbb{R}
$ satisfying Condition \ref{conditiion} and let $L$ be the left regular
representation of the group $N.$

\begin{itemize}
\item The unitary dual of $N$ is parametrized by the smooth manifold
\[
\Sigma=\left\{
\begin{array}
[c]{c}%
\lambda\in\mathfrak{n}^{\ast}:\det\left(  B\left(  \lambda\right)  \right)
\neq0,\lambda\left(  X_{1}\right)  =\cdots\\
=\lambda\left(  X_{d}\right)  =\lambda\left(  Y_{1}\right)  =\cdots
=\lambda\left(  Y_{d}\right)  =0
\end{array}
\right\}
\]
which is naturally identified with a Zariski open subset of $\mathfrak{z}%
^{\ast}.$

\item Let $d\lambda$ be the Lebesgue measure on $\Sigma.$ The Plancherel
measure for the group $N$ is supported on $\Sigma$ and is equal to
\begin{equation}
d\mu\left(  \lambda\right)  =\left\vert \mathrm{\det}\left(  B\left(
\lambda\right)  \right)  \right\vert d\lambda.
\end{equation}

\item The unitary dual of $N$ which we denote by $\widehat{N}$ is up to a null
set equal to $\left\{  \pi_{\lambda}:\lambda\in\Sigma\right\}  $ where each
representation $\pi_{\lambda}$ is realized as acting in $L^{2}\left(
\mathbb{R}
^{d}\right)  $ such that
\begin{align*}
\pi_{\lambda}\left(  \exp\left(  \sum_{i=1}^{n-2d}z_{i}Z_{i}\right)  \right)
f\left(  t\right)   &  =e^{2\pi i\lambda\left(  \sum_{i=1}^{n-2d}z_{i}%
Z_{i}\right)  }f\left(  t\right)  ,\\
\pi_{\lambda}\left(  \exp\left(  \sum_{i=1}^{d}y_{i}Y_{i}\right)  \right)
f\left(  t\right)   &  =e^{-2\pi i\left\langle B\left(  \lambda\right)
y,t\right\rangle }f\left(  t\right)  ,\\
\pi_{\lambda}\left(  \exp\left(  \sum_{i=1}^{d}x_{i}X_{i}\right)  \right)
f\left(  t\right)   &  =f\left(  t-x\right)  ,
\end{align*}
where $y=\left(  y_{1},\cdots,y_{d}\right)  ^{tr},$ and $x=\left(
x_{1},\cdots,x_{d}\right)  .$

\item $L\cong\mathcal{P}\circ L\circ\mathcal{P}^{-1}=\int_{\Sigma}^{\oplus}%
\pi_{\lambda}\otimes\mathbf{1}_{L^{2}\left(  \mathbb{R}^{d}\right)  }%
d\mu\left(  \lambda\right)  $ and $\mathbf{1}_{L^{2}\left(  \mathbb{R}%
^{d}\right)  }$ is the identity operator on $L^{2}\left(  \mathbb{R}%
^{d}\right)  .$ Moreover for $\lambda\in\Sigma,$ we have
\[
\mathcal{P}(L(x)\phi)(\lambda)=\pi_{\lambda}(x)\circ\left(  \mathcal{P}%
\phi\right)  (\lambda).
\]

\end{itemize}
\end{proposition}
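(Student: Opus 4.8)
The plan is to derive all four statements from Kirillov's orbit method applied to $N$ with the strong Malcev basis fixed above, together with the abstract Plancherel theorem for second countable unimodular type-I groups. Because $\mathfrak{n}$ is two-step we have $(\mathrm{ad}\,X)^{2}=0$ for every $X\in\mathfrak{n}$, so the coadjoint action is affine: $\mathrm{Ad}^{\ast}(\exp X)\lambda(Y)=\lambda(Y)+\lambda([Y,X])$. In particular each $\lambda(Z_{k})$ is constant along coadjoint orbits, since $\mathfrak{z}$ is central. The alternating form $B_{\lambda}(U,V)=\lambda([U,V])$ has $\mathfrak{z}$ in its radical, and in the chosen ordering of $\mathfrak{a}\oplus\mathfrak{b}$ it is the block matrix $\left(\begin{smallmatrix}0 & B(\lambda)\\ -B(\lambda)^{tr} & 0\end{smallmatrix}\right)$, which is nonsingular exactly on the Zariski open, dense, $\mathrm{Ad}^{\ast}$-invariant set $\Omega=\{\lambda:\det B(\lambda)\neq 0\}$; there the stabilizer of $\lambda$ has Lie algebra precisely $\mathfrak{z}$ and the orbit has dimension $2d$.

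To see that $\Sigma$ is a smooth cross-section for the orbits in $\Omega$, given $\mu\in\Omega$ I would use the invertibility of $B(\mu)$ to solve two $d\times d$ linear systems for $X_{a}\in\mathfrak{a}$ and $X_{b}\in\mathfrak{b}$ so that $\mathrm{Ad}^{\ast}(\exp(X_{a}+X_{b}))\mu$ annihilates all of $X_{1},\dots,X_{d},Y_{1},\dots,Y_{d}$ while its values on $\mathfrak{z}$, being orbit-invariant, are unchanged; uniqueness is immediate because two points of $\Sigma$ lying on one orbit agree on $\mathfrak{z}$ and vanish on $\mathfrak{a}\oplus\mathfrak{b}$. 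This establishes the first bullet and the identification of $\Sigma$ with the Zariski open subset $\{\lambda\in\mathfrak{z}^{\ast}:\det B(\lambda)\neq 0\}$.

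For the explicit model of $\pi_{\lambda}$ I would take $\mathfrak{m}=\mathfrak{z}\oplus\mathfrak{b}$, which is an ideal of $\mathfrak{n}$, is isotropic for $B_{\lambda}$, contains the radical $\mathfrak{z}$, and has dimension $n-d=\dim\mathfrak{n}-\tfrac12\dim\mathcal{O}_{\lambda}$, hence is a polarization subordinate to $\lambda$ satisfying the Pukanszky condition; then $\pi_{\lambda}=\mathrm{Ind}_{M}^{N}\chi_{\lambda}$ with $M=\exp\mathfrak{m}$ the maximal normal abelian subgroup identified earlier and $\chi_{\lambda}(\exp W)=e^{2\pi i\lambda(W)}$. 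Realizing the induced representation on $L^{2}(\mathbb{R}^{d})=L^{2}(N/M)$ via the section $s(t)=\exp(\sum_{k}t_{k}X_{k})$ and using the two-step Baker--Campbell--Hausdorff identity $\exp A\exp B=\exp(A+B+\tfrac12[A,B])$, one computes $s(t)^{-1}\exp(\sum_{i}y_{i}Y_{i})\,s(t)=\exp(\sum_{i}y_{i}Y_{i}-[\sum_{k}t_{k}X_{k},\sum_{i}y_{i}Y_{i}])$, so that the central elements act by the scalar $e^{2\pi i\lambda(\sum z_{i}Z_{i})}$, the $Y$-part acts by the multiplier $e^{-2\pi i\langle B(\lambda)y,t\rangle}$ (using $\lambda(Y_{i})=0$ and $\lambda([\sum t_{k}X_{k},\sum y_{i}Y_{i}])=\langle B(\lambda)y,t\rangle$), and the $X$-part acts by the translation $f(t)\mapsto f(t-x)$; these are the three displayed formulas. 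Kirillov's theorem then gives that $\{\pi_{\lambda}:\lambda\in\Sigma\}$ exhausts $\widehat{N}$ up to a null set.

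Finally, the last two bullets follow from the abstract Plancherel theorem: for a second countable unimodular type-I group one has $L\cong\int_{\widehat{N}}^{\oplus}\pi\otimes\mathbf{1}_{H_{\pi}}\,d\mu(\pi)$, and the Plancherel transform intertwines $L(x)$ with $\pi_{\lambda}(x)\otimes\mathbf{1}$, the relation $\mathcal{P}(L(x)\phi)(\lambda)=\pi_{\lambda}(x)\circ(\mathcal{P}\phi)(\lambda)$ being the $L^{2}$-extension of $\mathcal{F}(L(x)f)(\lambda)=\int_{N}f(n)\pi_{\lambda}(xn)\,dn=\pi_{\lambda}(x)\mathcal{F}(f)(\lambda)$. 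It remains to compute the Plancherel density against $d\lambda$ on $\Sigma$; by the Pfaffian formula for the Plancherel measure of a nilpotent Lie group (see \cite{Corwin}) this density equals the absolute value of the Pfaffian of the restriction of $B_{\lambda}$ to $\mathfrak{a}\oplus\mathfrak{b}$, and the Pfaffian of $\left(\begin{smallmatrix}0 & B(\lambda)\\ -B(\lambda)^{tr} & 0\end{smallmatrix}\right)$ is $\pm\det B(\lambda)$, which gives $d\mu(\lambda)=|\det B(\lambda)|\,d\lambda$ once Lebesgue measure on $\mathfrak{n}$ is transported to Haar measure on $N$ by $\exp$. The part I expect to require the most care is not conceptual but bookkeeping: checking that the explicit $\Sigma$ is genuinely a smooth $N$-invariant cross-section, and that the normalization of the measures produces exactly the density $|\det B(\lambda)|$ with no spurious constant.
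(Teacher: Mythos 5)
Your outline is correct: the affine coadjoint action for a two\mbox{-}step algebra, the cross-section $\Sigma$ obtained by solving the two $d\times d$ linear systems with the invertible $B(\mu)$, the polarization $\mathfrak{m}=\mathfrak{z}\oplus\mathfrak{b}$ with the induced-representation computation yielding the three displayed formulas, and the Pfaffian $\pm\det B(\lambda)$ giving the Plancherel density all check out, and this is precisely the standard orbit-method/Plancherel derivation the paper relies on. The paper itself offers no proof beyond citing Corwin--Greenleaf (Section 4.3) and the author's earlier specialization of these facts, so your argument simply supplies the details of that same route.
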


The results in the proposition above are some facts, which are well-known in
the theory of harmonic analysis of nilpotent Lie groups. See \cite{oussa},
where we specialized to the class of groups considered here. For general
nilpotent Lie groups, we refer the interested reader to Section $4.3$ in
\cite{Corwin} which contains a complete presentation of the Plancherel theory
of nilpotent Lie groups.

We will now provide a few examples of Lie groups satisfying Condition
\ref{conditiion}.

\begin{example}
\label{rot} Let $N$ be a nilpotent Lie group with Lie algebra $\mathfrak{n}$
spanned by the strong Malcev basis $Z_{1},$ $Z_{2},$ $Y_{1},$ $Y_{2},$
$X_{1},$ $X_{2}$ with non-trivial Lie brackets:%
\begin{align*}
\left[  X_{1},Y_{1}\right]   &  =Z_{1},\left[  X_{2},Y_{1}\right]  =-Z_{2},\\
\left[  X_{1},Y_{2}\right]   &  =Z_{2},\left[  X_{2},Y_{2}\right]  =Z_{1}.
\end{align*}
Clearly, $N$ satisfies all properties described in Condition \ref{conditiion}
and
\[
\det\left(  \left[  \left[  X_{i},Y_{j}\right]  \right]  _{1\leq i,j\leq
2}\right)  =\det\left[
\begin{array}
[c]{cc}%
Z_{1} & Z_{2}\\
-Z_{2} & Z_{1}%
\end{array}
\right]  =Z_{1}^{2}+Z_{2}^{2}.
\]
Applying Proposition \ref{faithful}, we define the monomorphism $\pi
:N\rightarrow GL_{7}\left(
\mathbb{R}
\right)  $ such that for
\[
p=\exp\left(  z_{1}Z_{1}\right)  \exp\left(  z_{2}Z_{2}\right)  \exp\left(
y_{1}Y_{1}\right)  \exp\left(  y_{2}Y_{2}\right)  \exp\left(  x_{1}%
X_{1}\right)  \exp\left(  x_{2}X_{2}\right)  ,
\]
the image of $p$ under the representation $\pi$ is the following matrix:%
\[
\left[
\begin{array}
[c]{ccccccc}%
1 & 0 & x_{1} & x_{2} & -y_{1} & -y_{2} & 2z_{1}\\
0 & 1 & -x_{2} & x_{1} & -y_{2} & y_{1} & 2z_{2}\\
0 & 0 & 1 & 0 & 0 & 0 & y_{1}\\
0 & 0 & 0 & 1 & 0 & 0 & y_{2}\\
0 & 0 & 0 & 0 & 1 & 0 & x_{1}\\
0 & 0 & 0 & 0 & 0 & 1 & x_{2}\\
0 & 0 & 0 & 0 & 0 & 0 & 1
\end{array}
\right]  .
\]
Next, referring to Proposition \ref{analysis}, the Plancherel measure is
supported on the manifold
\[
\Sigma=\left\{
\begin{array}
[c]{c}%
\lambda\in\mathfrak{n}^{\ast}:\lambda\left(  Z_{1}\right)  ^{2}+\lambda\left(
Z_{2}\right)  ^{2}\neq0,\\
\lambda\left(  Y_{j}\right)  =0,\lambda\left(  X_{j}\right)  =0\text{ for
}1\leq j\leq3
\end{array}
\right\}
\]
and the Plancherel measure is $\left\vert \lambda_{1}^{2}+\lambda_{2}%
^{2}\right\vert d\lambda_{1}d\lambda_{2}$ where $\lambda_{k}=\lambda\left(
Z_{k}\right)  .$
\end{example}

The following example exhausts all elements in the class of groups considered
in this paper.

\begin{example}
Fix two natural numbers $n$ and $d,$ such that $n-2d>0.$ Let $M$ be a matrix
of order $d$ with entries in $%
\mathbb{R}
Z_{1}\oplus\cdots\oplus%
\mathbb{R}
Z_{n-2d}$ such that $\det\left(  M\right)  $ is a non-vanishing homogeneous
polynomial in the unknowns $Z_{1},Z_{2},\cdots,Z_{n-2d}.$ Now let
$\mathfrak{a=%
\mathbb{R}
}$-span $\left\{  X_{1},\cdots,X_{d}\right\}  ,$ and $\mathfrak{b=%
\mathbb{R}
}$-span $\left\{  Y_{1},\cdots,Y_{d}\right\}  $ such that $\left[  X_{i}%
,Y_{j}\right]  =M_{i,j}$ and $M_{i,j}$ is the entry of $M$ located at the
intersection of the $i$-th row and $j$-th column. The Lie algebra
\[
\mathfrak{n=a\oplus b}\oplus\left(
\mathbb{R}
Z_{1}\oplus\cdots\oplus%
\mathbb{R}
Z_{n-2d}\right)
\]
satisfies all properties given in Condition \ref{conditiion}.
\end{example}

Now, we define $\Gamma_{\mathfrak{b}}=\exp\left(
\mathbb{Z}
Y_{1}+\cdots+%
\mathbb{Z}
Y_{d}\right)  ,$ $\Gamma_{\mathfrak{a}}=\exp\left(
\mathbb{Z}
X_{1}+\cdots+%
\mathbb{Z}
X_{d}\right)  $,
\[
\Gamma_{\mathfrak{z}}=\exp\left(
\mathbb{Z}
Z_{1}+\cdots+%
\mathbb{Z}
Z_{n-2d}\right)
\]
and
\begin{equation}
\Gamma=\Gamma_{\mathfrak{z}}\Gamma_{\mathfrak{b}}\Gamma_{\mathfrak{a}}\subset
N. \label{gammas}%
\end{equation}
Then $\Gamma$ is a discrete subset of $N\ $which is not generally a subgroup
of $N.$

\section{Overview of Main Results}

In this section, we will present an overview of the main results. In order to
do so, we will need a few important definitions.

\begin{definition}
We say a function $f\in L^{2}(N)$ is bandlimited if its Plancherel transform
is supported on a bounded measurable subset of $\Sigma.$ Fix a measurable
field of unit vectors $\mathbf{e=}\left\{  \mathbf{e}_{\lambda}\right\}
_{\lambda\in\Sigma}$ where $\mathbf{e}_{\lambda}\in L^{2}\left(
\mathbb{R}^{d}\right)  .$ We say a Hilbert space is a multiplicity-free
left-invariant subspace of $L^{2}\left(  N\right)  $ if
\[
\mathbf{H}\left(  \mathbf{e}\right)  \text{ }\mathbf{=}\text{ }\mathcal{P}%
^{-1}\left(  \int_{\Sigma}^{\oplus}L^{2}\left(  \mathbb{R}^{d}\right)
\otimes\mathbf{e}_{\lambda}\text{ }d\mu\left(  \lambda\right)  \right)  .
\]

\end{definition}

We observe here that the Hilbert space $\mathcal{P}\left(  \mathbf{H}\left(
\mathbf{e}\right)  \right)  $ is naturally identified with $L^{2}\left(
\Sigma\times\mathbb{R}^{d}\right)  .$ Next, we define
\begin{equation}
\mathbf{E}=\left\{  \lambda\in\mathfrak{z}^{\ast}:\left\vert \det B\left(
\lambda\right)  \right\vert \neq0,\text{ and }\left\vert \det B\left(
\lambda\right)  \right\vert \leq1\right\}  . \label{set E}%
\end{equation}
It is easy to see that $\mathbf{E}$ is the intersection of a Zariski open
subset of $\mathfrak{z}^{\ast}$ and a closed subset of $\mathfrak{z}^{\ast}.$
Also, $\mathbf{E}$ is not bounded in general and $\mathbf{E}$ is necessarily a
set of positive Lebesgue measure on $\mathfrak{z}^{\ast}.$ In order to develop
a theory of bandlimitation, we will need to consider some bounded subset of
$\mathbf{E.}$

For any given bounded set $\mathbf{A}\subset\Sigma,$ we define the
corresponding multiplicity-free, bandlimited, left-invariant Hilbert subspace
$\mathbf{H}\left(  \mathbf{e,A}\right)  $ as follows $\ $%
\begin{equation}
\mathbf{H}\left(  \mathbf{e,A}\right)  \text{ }\mathbf{=}\text{ }%
\mathcal{P}^{-1}\left(  \int_{\mathbf{A}}^{\oplus}L^{2}\left(
\mathbb{R}
^{d}\right)  \otimes\mathbf{e}_{\lambda}\left\vert \det B\left(
\lambda\right)  \right\vert d\lambda\right)  .\label{EC}%
\end{equation}
To be more precise, for any $\phi\in\mathbf{H}\left(  \mathbf{e,A}\right)  ,$
there exists a measurable field of vectors $\left\{  \mathbf{w}_{\lambda
}^{\phi}\right\}  _{\lambda\in\mathbf{A}},$ $\mathbf{w}_{\lambda}^{\phi}\in
L^{2}\left(
\mathbb{R}
^{d}\right)  $ such that
\[
\mathcal{P}\phi\left(  \lambda\right)  =\left\{
\begin{array}
[c]{c}%
\mathbf{w}_{\lambda}^{\phi}\otimes\mathbf{e}_{\lambda}\text{ if }\lambda
\in\mathbf{A}\\
0\text{ if }\lambda\notin\mathbf{A}%
\end{array}
\right.  .
\]
Let $\phi\in\mathbf{H}\left(  \mathbf{e,A}\right)  $ and define the linear map
$W_{\phi}:\mathbf{H}\left(  \mathbf{e,A}\right)  \mathbf{\rightarrow}$
$L^{2}\left(  N\right)  ,$ such that $W_{\phi}\psi\left(  x\right)
=\left\langle \psi,L\left(  x\right)  \phi\right\rangle .$ It is easy to see
that the space $W_{\phi}\left(  \mathbf{H}\left(  \mathbf{e,A}\right)
\right)  $ is a subspace of $L^{2}\left(  N\right)  $ which consists of
continuous functions.

Let $\iota:%
\mathbb{R}
^{n-2d}\rightarrow\mathfrak{z}^{\ast}$ be a map defined by
\[
\iota\left(  \lambda_{1},\cdots,\lambda_{n-2d}\right)  =%
{\displaystyle\sum\limits_{k=1}^{n-2d}}
\lambda_{k}Z_{k}^{\ast}%
\]
where $\left\{  Z_{k}^{\ast}:1\leq k\leq n-2d\right\}  $ is the dual basis of
$\mathfrak{z}^{\ast}$ which is associated to
\[
\left\{  Z_{k}:1\leq k\leq n-2d\right\}
\]
which is a fixed basis for the central ideal $\mathfrak{z.}$ Clearly, $\iota$
is a measurable bijection. Identifying $%
\mathbb{R}
^{n-2d}$ with $\mathfrak{z}^{\ast}$via the map $\iota,$ we slightly abuse
notation when we say $\mathfrak{z}^{\ast}=%
\mathbb{R}
^{n-2d}.$ In order to make a simpler presentation, we will adopt this abuse of
notation for the remainder of the paper. Now, let $\mathbf{C}\subset
\mathfrak{z}^{\ast}=%
\mathbb{R}
^{n-2d}$ be a bounded set such that
\[
\left\{  e^{2\pi i\left\langle k,\lambda\right\rangle }\chi_{\mathbf{C}%
}\left(  \lambda\right)  :k\in%
\mathbb{Z}
^{n-2d}\right\}
\]
is a Parseval frame for $L^{2}\left(  \mathbf{C,}d\lambda\right)  .$ For
example, it suffices to pick $\mathbf{C\subseteq I}$ such that the collection
$\left\{  \mathbf{I+}\text{ }k\text{ }\mathbf{:}\text{ }k\in%
\mathbb{Z}
^{n-2d}\right\}  $ forms a measurable partition of $%
\mathbb{R}
^{n-2d}=\mathfrak{z}^{\ast}$. Our main results are summarized as follows.

\begin{theorem}
\label{main}Let $N$ be a connected, simply connected nilpotent Lie group
satisfying Condition \ref{conditiion}. There exists $\phi\in\mathbf{H}\left(
\mathbf{e,E\cap C}\right)  $ such that $W_{\phi}\left(  \mathbf{H}\left(
\mathbf{e,E\cap C}\right)  \right)  $ is a $\Gamma$-sampling subspace of
$L^{2}\left(  N\right)  $ with sinc-type function $W_{\phi}(\phi).$ Moreover,
$W_{\phi}\left(  \mathbf{H}\left(  \mathbf{e,E\cap C}\right)  \right)  $ does
not generally have the interpolation property with respect to $\Gamma.$
\end{theorem}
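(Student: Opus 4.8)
The plan is to transport everything to the Plancherel side, where the problem becomes a statement about Gabor-type systems in $L^2(\mathbb{R}^d)$ parametrized by $\lambda$. First I would observe that, via $\mathcal{P}$ and Proposition \ref{analysis}, the left translates $L(\gamma)\phi$ for $\gamma \in \Gamma = \Gamma_{\mathfrak{z}}\Gamma_{\mathfrak{b}}\Gamma_{\mathfrak{a}}$ act on $\mathcal{P}\phi(\lambda) = \mathbf{w}_\lambda^\phi \otimes \mathbf{e}_\lambda$ by multiplication by $e^{2\pi i \langle m, \lambda\rangle}$ (from $\Gamma_{\mathfrak{z}}$), modulation by $e^{-2\pi i \langle B(\lambda)p, t\rangle}$ (from $\Gamma_{\mathfrak{b}}$, $p \in \mathbb{Z}^d$), and translation $t \mapsto t - q$ (from $\Gamma_{\mathfrak{a}}$, $q \in \mathbb{Z}^d$). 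So on the fiber over $\lambda$ we see, tensored against the fixed field $\mathbf{e}_\lambda$, the Gabor system generated by $\mathbf{w}_\lambda^\phi$ with time-shift lattice $\mathbb{Z}^d$ and frequency-shift lattice $B(\lambda)^{tr}\mathbb{Z}^d$, while the $\Gamma_{\mathfrak{z}}$-translates supply the Fourier exponentials $e^{2\pi i \langle m,\lambda\rangle}$ over the variable $\lambda \in \mathbf{E}\cap\mathbf{C}$. The sampling property (the isometry $R_\Gamma$ and the reproducing formula with sinc-type function $W_\phi(\phi)$) then amounts to: the exponentials $\{e^{2\pi i\langle m,\lambda\rangle}\}_{m}$ form a Parseval frame for $L^2(\mathbf{E}\cap\mathbf{C})$ (which holds since $\mathbf{C}$ was chosen so that the exponentials are Parseval for $L^2(\mathbf{C})$, and $\mathbf{E}\cap\mathbf{C} \subseteq \mathbf{C}$), together with the pointwise-in-$\lambda$ requirement that $\mathbf{w}_\lambda^\phi$ generate a Parseval frame (in fact orthonormal basis, by a volume count) for $L^2(\mathbb{R}^d)$ under the Gabor system with lattices $\mathbb{Z}^d$ and $B(\lambda)^{tr}\mathbb{Z}^d$. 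This is precisely where the condition $|\det B(\lambda)| \le 1$ on $\mathbf{E}$ enters: the density of the Gabor system is $|\det B(\lambda)| \le 1$, so by the well-known density/completeness theory for Gabor systems (and for full lattices, the existence of orthonormal-basis generators when the density is an integer, or Parseval-frame generators in general) one can select a measurable field $\lambda \mapsto \mathbf{w}_\lambda^\phi$ doing the job; these assertions are exactly the content of the "results related to sampling and frames" promised for Section 4, so I would cite them. Defining $\phi$ by $\mathcal{P}\phi(\lambda) = \mathbf{w}_\lambda^\phi \otimes \mathbf{e}_\lambda$ then makes $W_\phi(\mathbf{H}(\mathbf{e},\mathbf{E}\cap\mathbf{C}))$ a $\Gamma$-sampling space with sinc-type function $W_\phi(\phi)$.

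For the second assertion — failure of the interpolation property in general — the point is that surjectivity of $R_\Gamma: \mathbf{H} \to \ell^2(\Gamma)$ would force the relevant frames to be Riesz bases, hence orthonormal bases, on both layers simultaneously. On the $\lambda$-layer this forces $\{e^{2\pi i \langle m, \lambda\rangle}\}_m$ to be an orthonormal basis of $L^2(\mathbf{E}\cap\mathbf{C})$, i.e.\ $\mathbf{E}\cap\mathbf{C}$ must (up to null sets) tile $\mathbb{R}^{n-2d}$ by $\mathbb{Z}^{n-2d}$-translates — equivalently $\mathbf{E}\cap\mathbf{C}$ must have full measure in $\mathbf{C}$ when $\mathbf{C}$ is a fundamental domain. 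On the fiber-layer it forces $|\det B(\lambda)| = 1$ almost everywhere on $\mathbf{E}\cap\mathbf{C}$, since a Gabor system over lattices of density $|\det B(\lambda)|$ is complete/Riesz only at density exactly $1$. Thus interpolation holds iff $|\det B(\lambda)| \equiv 1$ on $\mathbf{E}\cap\mathbf{C}$; but $\det B(\lambda)$ is a non-vanishing homogeneous polynomial of positive degree in $\lambda_1,\dots,\lambda_{n-2d}$ by Condition \ref{conditiion}, so $\{\lambda : |\det B(\lambda)| = 1\}$ is a measure-zero hypersurface, whereas $\mathbf{E}\cap\mathbf{C} = \{\lambda \in \mathbf{C} : 0 < |\det B(\lambda)| \le 1\}$ has positive measure. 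Hence the interpolation property fails. To make this a clean "does not generally" statement I would exhibit this dichotomy and point to Example \ref{rot} (where $\det B(\lambda) = \lambda_1^2 + \lambda_2^2$, certainly not identically $1$ on a positive-measure set) as a concrete instance, while noting the degenerate edge case $d$-dimensional Heisenberg-type situations where the polynomial is constant are exactly those where interpolation can hold.

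The main obstacle I anticipate is the measurable-selection issue: one must produce a \emph{measurable} field $\lambda \mapsto \mathbf{w}_\lambda^\phi$ of Parseval-frame (or ONB) generators for the $\lambda$-dependent Gabor systems, uniformly enough that the resulting $\phi$ lies in $\mathbf{H}(\mathbf{e},\mathbf{E}\cap\mathbf{C})$ with $\|\mathbf{w}_\lambda^\phi\|_{L^2(\mathbb{R}^d)}$ controlled so the Plancherel integral converges. Because $B(\lambda)$ varies real-analytically and is invertible on $\Sigma$, one can conjugate by the dilation sending $B(\lambda)^{tr}\mathbb{Z}^d$ to a fixed rectangular lattice, reducing to a single fixed Gabor problem with a $\lambda$-dependent unimodular-up-to-scaling change of variables; the standard Zak-transform construction of Gabor Parseval frames then produces a generator depending measurably (indeed smoothly) on $\lambda$. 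This is the step where I would be most careful, and it is presumably the substance of the Section 4 results that Theorem \ref{main} is stated to rely on.
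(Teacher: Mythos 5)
Your construction of $\phi$ in the first half follows the paper's route almost exactly: transfer to the Plancherel side, recognize $\pi_{\lambda}(\Gamma_{\mathfrak{b}}\Gamma_{\mathfrak{a}})\mathbf{u}_{\lambda}$ as a Gabor system of density $\left\vert \det B(\lambda)\right\vert \leq 1$, invoke the density theorem of Han--Wang (Lemma \ref{dense}) fiberwise, use the Parseval property of the exponentials on $\mathbf{E\cap C}$ to glue the fibers, and conclude via admissibility and Proposition \ref{sinc1}. The measurable-selection worry you flag is resolved in the paper by Remark \ref{remark}: one takes $\mathbf{u}_{\lambda}=\left\vert \det B(\lambda)\right\vert ^{1/2}\chi_{E(\lambda)}$ with $E(\lambda)$ a tiling/packing set depending measurably on $\lambda$ (Pfander--Rashkov--Wang), so no Zak-transform argument is needed. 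One small slip: these fiber Gabor systems are Parseval frames with $\Vert\mathbf{u}_{\lambda}\Vert^{2}=\left\vert \det B(\lambda)\right\vert$, not orthonormal bases "by a volume count"; they are orthonormal bases only where $\left\vert \det B(\lambda)\right\vert =1$.

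The second half contains a genuine gap. You assert that surjectivity of $R_{\Gamma}$ forces the frame to be an orthonormal basis \emph{on each layer separately}, hence forces $\left\vert \det B(\lambda)\right\vert =1$ almost everywhere, and you then state the dichotomy "interpolation holds iff $\left\vert \det B(\lambda)\right\vert \equiv 1$ on $\mathbf{E\cap C}$." That equivalence is false, and the paper's own Theorem \ref{steps} and its examples refute it: there one obtains genuine $\Gamma$-sampling spaces with the interpolation property over sets $\mathbf{E}^{\circ}$ on which $\left\vert \det B(\lambda)\right\vert <1$ almost everywhere (e.g.\ $\left\vert \det B(\lambda)\right\vert =\left\vert \lambda_{1}\lambda_{2}\right\vert$ on $[-1,1]^{2}$). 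The point is that $L(\Gamma)\phi$ can be an orthonormal basis of the direct integral without either the fiber Gabor systems or the central exponentials being orthonormal bases individually; the tensor-like structure does not decouple. The paper's actual argument for the failure of interpolation is much more economical: surjectivity of $R_{\Gamma}$ would make the Parseval frame $L(\Gamma)\phi$ a Riesz, hence orthonormal, basis, which requires $\Vert\phi\Vert_{\mathbf{H}(\mathbf{e},\mathbf{E\cap C})}^{2}=1$; but the computation in Lemma \ref{const} gives $\Vert\phi\Vert^{2}=\mu(\mathbf{E\cap C})=\int_{\mathbf{E\cap C}}\left\vert \det B(\lambda)\right\vert d\lambda$, which for the given choice of $\mathbf{C}$ need not equal $1$. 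Your final conclusion is true, but the route you take to it rests on an unjustified (and in general false) layer-by-layer decoupling; replacing it with the norm computation closes the gap.
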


Now, let $N$ be a nilpotent Lie group which satisfies all properties described
in Condition \ref{conditiion} such that additionally, there exist a strong
Malcev basis for the Lie algebra $\mathfrak{n},$ and a compact subset
$\mathbf{R\ }$of $\mathfrak{z}^{\ast}$ such that for $\mathbf{E}^{\circ
}=\mathbf{E\cap R,}$%

\[
\int_{\mathbf{E}^{\circ}}\left\vert \det B\left(  \lambda\right)  \right\vert
d\lambda=1.
\]
Since $\mathbf{E}^{\circ}$ is a subset of $\mathbf{E,}$ for each $\lambda
\in\mathbf{E}^{\circ}$ there exists\ (see Remark \ref{remark}) a corresponding
set $E\left(  \lambda\right)  $ which tiles $\mathbb{R}^{d}$ by $\mathbb{Z}%
^{d}$ and packs $\mathbb{R}^{d}$ by $B\left(  \lambda\right)  ^{-tr}%
\mathbb{Z}^{d}.$ Fix a fundamental domain $\Lambda$ for the lattice $%
\mathbb{Z}
^{n-2d}$ such that
\[
\mathbf{E}^{\circ}=%
{\displaystyle\bigcup\limits_{k_{j}\in S}}
\left(  \left(  \Lambda-\kappa_{j}\right)  \cap\mathbf{E}^{\circ}\right)
\]
where $S$ is a finite subset of $%
\mathbb{Z}
^{n-2d}$ and each $\left(  \Lambda-\kappa_{j}\right)  \cap\mathbf{E}^{\circ}$
is a set of positive Lebesgue measure on $\mathbb{R}^{n-2d}.$ For $\lambda\in\Sigma,$ we define the
map $\lambda\mapsto\mathbf{u}_{\lambda}$ on $\Sigma$ such that
\begin{equation}
\mathbf{u}_{\lambda}=\left\{
\begin{array}
[c]{c}%
\left\vert \det B\left(  \lambda\right)  \right\vert ^{1/2}\chi_{E\left(
\lambda\right)  }\text{ if }\lambda\in\mathbf{E}^{\circ}\\
0\text{ if }\lambda\notin\Sigma-\mathbf{E}^{\circ}%
\end{array}
\right.  \label{extension}%
\end{equation}
and $\phi\in L^{2}\left(  N\right)  $ such that
\begin{equation}
\mathcal{P}\phi\left(  \lambda\right)  =\frac{\mathbf{u}_{\lambda}%
\otimes\mathbf{e}_{\lambda}}{\sqrt{\left\vert \det B\left(  \lambda\right)
\right\vert }}.
\end{equation}

\begin{theorem}
\label{steps}If for each $1\leq j,j^{\prime}\leq\mathrm{card}\left(  S\right)
,$ $j\neq j^{\prime},$ for $\lambda\in\Lambda,$ for arbitrary functions
$\mathbf{f},\mathbf{g}\in L^{2}\left(  \mathbb{R}^{d}\right)  ,$ and for
distinct $\kappa_{j},\kappa_{j^{\prime}}\in S$
\[
\left(  \left\langle \mathbf{f},\pi_{\lambda-\kappa_{j}}\left(  \gamma
_{1}\right)  \mathbf{u}_{\lambda-\kappa_{j}}\right\rangle \right)
_{\gamma_{1}\in\Gamma_{\mathfrak{b}}\Gamma_{\mathfrak{a}}}\perp\left(
\left\langle \mathbf{g},\pi_{\lambda-\kappa_{j^{\prime}}}\left(  \gamma
_{1}\right)  \mathbf{u}_{\lambda-\kappa_{j^{\prime}}}\right\rangle \right)
_{\gamma_{1}\in\Gamma_{\mathfrak{b}}\Gamma_{\mathfrak{a}}}\text{,}%
\]
then $W_{\phi}\left(  \mathbf{H}\left(  \mathbf{e,E}^{\circ}\right)  \right)
$ is a $\Gamma$-sampling space with sinc-type function $W_{\phi}\left(
\phi\right)  .$ Moreover, $W_{\phi}\left(  \mathbf{H}\left(  \mathbf{e,E}%
^{\circ}\right)  \right)  $ has the interpolation property.
\end{theorem}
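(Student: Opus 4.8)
The plan is to transport everything through the Plancherel transform and reduce the sampling/interpolation statement on $N$ to a statement about families of exponentials and shifts on $L^2(\mathbf{E}^\circ\times\mathbb{R}^d)$. First I would use Proposition \ref{analysis} to write, for $\psi\in\mathbf{H}(\mathbf{e},\mathbf{E}^\circ)$, the sample values $W_\phi\psi(\gamma)=\langle\psi,L(\gamma)\phi\rangle$ with $\gamma=\gamma_3\gamma_1$, $\gamma_3\in\Gamma_{\mathfrak z}$, $\gamma_1\in\Gamma_{\mathfrak b}\Gamma_{\mathfrak a}$, as an integral over $\mathbf{E}^\circ$ of $\langle\mathbf{w}_\lambda^\psi,\pi_\lambda(\gamma_1)\mathbf{u}_\lambda\rangle_{L^2(\mathbb{R}^d)}$ against the central character $e^{-2\pi i\langle\gamma_3,\lambda\rangle}$ (the $\mathbf{e}_\lambda$-components cancel to $1$ by the rank-one inner product formula). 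So $R_\Gamma W_\phi\psi$ is, in the $\gamma_3$-variable, a Fourier coefficient sequence and, in the $\gamma_1$-variable, a coefficient sequence against the orbit $\{\pi_\lambda(\gamma_1)\mathbf{u}_\lambda\}$.

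Next I would establish the isometry of $R_\Gamma$ fibrewise. Because $\mathbf{E}^\circ$ has total Plancherel mass $1$ and decomposes as $\bigcup_{\kappa_j\in S}(\Lambda-\kappa_j)\cap\mathbf{E}^\circ$, I would pull the $\kappa_j$-translates back into the fundamental domain $\Lambda$, so that for a.e.\ $\lambda\in\Lambda$ the fibre of $R_\Gamma W_\phi\psi$ over the $\mathbb{Z}^{n-2d}$-orbit of $\lambda$ is a finite direct sum over $j$ of the sequences $(\langle\mathbf{w}_{\lambda-\kappa_j}^\psi,\pi_{\lambda-\kappa_j}(\gamma_1)\mathbf{u}_{\lambda-\kappa_j}\rangle)_{\gamma_1}$. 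Here the hypothesis of the theorem enters decisively: the orthogonality between the $j$ and $j'$ blocks for $j\neq j'$ means the squared $\ell^2$-norm of the fibre is the sum of the squared norms of the individual blocks, with no cross terms. For each single block I would invoke the results of Section 4 (the sampling/frame results the introduction says are proved there): since $E(\lambda)$ tiles $\mathbb{R}^d$ by $\mathbb{Z}^d$ and packs $\mathbb{R}^d$ by $B(\lambda)^{-tr}\mathbb{Z}^d$, the system $\{\pi_{\lambda-\kappa_j}(\gamma_1)\mathbf{u}_{\lambda-\kappa_j}\}_{\gamma_1\in\Gamma_{\mathfrak b}\Gamma_{\mathfrak a}}$ is a Parseval frame (in fact an orthonormal system of the appropriate cardinality) for the model space attached to $\mathbf{w}_{\lambda-\kappa_j}^\psi$; combined with the Fourier orthonormality in the $\gamma_3$-direction over $\Lambda$, this yields $\|R_\Gamma W_\phi\psi\|_{\ell^2(\Gamma)}^2=\|\psi\|^2$, i.e.\ $R_\Gamma$ restricted to $W_\phi(\mathbf{H}(\mathbf{e},\mathbf{E}^\circ))$ is an isometry. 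The sinc-type expansion $f(x)=\sum_\gamma f(\gamma)W_\phi\phi(\gamma^{-1}x)$ then follows from the reproducing-kernel identity for $W_\phi$ together with the frame expansion, exactly as in \cite{Currey}.

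For the interpolation property I need $R_\Gamma$ to be onto $\ell^2(\Gamma)$. This is where the packing condition on $E(\lambda)$ is upgraded to a tiling-type statement making the single-block systems not merely Parseval frames but orthonormal bases of $\ell^2(\Gamma_{\mathfrak b}\Gamma_{\mathfrak a})$ for each fixed central parameter: one counts that $|\mathbb{Z}^d/B(\lambda)^{-tr}\mathbb{Z}^d|=|\det B(\lambda)|^{-1}$ matches the normalization $|\det B(\lambda)|^{1/2}\chi_{E(\lambda)}$ built into $\mathbf{u}_\lambda$, so the map from $L^2(\mathbb{R}^d)$ onto its coefficient sequences is surjective on each block. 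Then surjectivity over the whole fibre uses the orthogonal-block hypothesis again — the ranges of the $j$-blocks being orthogonal and jointly spanning — together with surjectivity of the Fourier coefficient map in the $\gamma_3$-direction (here I must check that the condition $\int_{\mathbf{E}^\circ}|\det B(\lambda)|\,d\lambda=1$ forces the $\Lambda$-fibre to have exactly the right total dimension, so that no $\ell^2(\Gamma)$-directions are missed). The main obstacle I anticipate is precisely this fibrewise dimension bookkeeping: showing that the finite family $\{(\Lambda-\kappa_j)\cap\mathbf{E}^\circ\}_j$, after the orbit identification and with the orthogonality hypothesis, assembles into an orthonormal basis rather than a proper Parseval frame — equivalently, that the "missing mass" of $\mathbf{E}^\circ$ inside each translated copy of $\Lambda$ is exactly compensated by the multiplicity coming from $\mathrm{card}(S)$. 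Granting that, an arbitrary $c\in\ell^2(\Gamma)$ is matched by choosing each $\mathbf{w}_\lambda^\psi$ to be the preimage of the corresponding block of $c$ under the block-wise orthonormal-basis synthesis operator, and $\psi:=\mathcal{P}^{-1}(\mathbf{w}_\lambda\otimes\mathbf{e}_\lambda)$ then satisfies $R_\Gamma W_\phi\psi=c$.
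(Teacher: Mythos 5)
Your first half --- establishing that $R_{\Gamma}$ is an isometry on $W_{\phi}\left(  \mathbf{H}\left(  \mathbf{e,E}^{\circ}\right)  \right)$ --- follows the paper's route (Lemma \ref{ONB}): decompose $\mathbf{E}^{\circ}$ over the translates $\Lambda-\kappa_{j}$, use the orthogonality hypothesis to convert $\sum_{j}\left\vert \cdot\right\vert ^{2}$ into $\left\vert \sum_{j}\cdot\right\vert ^{2}$ with no cross terms, and combine the per-block Gabor Parseval frames with Fourier orthonormality over $\Lambda$ in the central variable. That part is sound and essentially identical to the paper.

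The interpolation half, however, contains a genuine error. You claim the single-block systems $\left\{  \pi_{\lambda-\kappa_{j}}\left(  \gamma_{1}\right)  \mathbf{u}_{\lambda-\kappa_{j}}\right\}  _{\gamma_{1}}$ are ``orthonormal bases of $\ell^{2}\left(  \Gamma_{\mathfrak{b}}\Gamma_{\mathfrak{a}}\right)  $,'' i.e.\ that each block's analysis operator is surjective. This is false whenever $\left\vert \det B\left(  \lambda\right)  \right\vert <1$: the Gabor Parseval frame $\mathcal{G}\left(  \mathbf{u}_{\lambda},\mathbb{Z}^{d}\times B\left(  \lambda\right)  \mathbb{Z}^{d}\right)  $ is then overcomplete, its vectors have norm $\left\vert \det B\left(  \lambda\right)  \right\vert ^{1/2}<1$, and the range of its analysis operator is a \emph{proper} closed subspace of $\ell^{2}$. (Your index count $\left\vert \mathbb{Z}^{d}/B\left(  \lambda\right)  ^{-tr}\mathbb{Z}^{d}\right\vert $ is not a group index since $B\left(  \lambda\right)  ^{-tr}\mathbb{Z}^{d}$ is not a sublattice of $\mathbb{Z}^{d}$.) Consequently your ``ranges orthogonal and jointly spanning'' step is unsupported --- proving that the orthogonal block ranges fill up $\ell^{2}\left(  \Gamma_{\mathfrak{b}}\Gamma_{\mathfrak{a}}\right)$ fibrewise is precisely the bookkeeping you flag as an obstacle, and it does not follow from what you have written. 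The paper avoids this entirely: since $\mathcal{P}\phi\left(  \lambda\right)  =\chi_{E\left(  \lambda\right)  }\otimes\mathbf{e}_{\lambda}$ and $E\left(  \lambda\right)  $ tiles $\mathbb{R}^{d}$ by $\mathbb{Z}^{d}$, one gets $\left\Vert \phi\right\Vert ^{2}=\int_{\mathbf{E}^{\circ}}\left\vert \det B\left(  \lambda\right)  \right\vert d\lambda=1$, so the Parseval frame $L\left(  \Gamma\right)  \phi$ consists of unit vectors and is therefore an orthonormal basis of $\mathbf{H}\left(  \mathbf{e,E}^{\circ}\right)  $. Since $R_{\Gamma}W_{\phi}\psi=\left(  \left\langle \psi,L\left(  \gamma\right)  \phi\right\rangle \right)  _{\gamma\in\Gamma}$ is exactly the analysis operator of this orthonormal basis, it is unitary onto $\ell^{2}\left(  \Gamma\right)  $, and surjectivity of $R_{\Gamma}$ --- hence interpolation --- is immediate. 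Your argument should be repaired by replacing the per-block surjectivity claim with this global unit-norm observation.
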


The proof of Theorem \ref{main} and Theorem \ref{steps} will be given in the
last section of this paper.

\section{Results on Frames and Orthonormal Bases}

We will need to be familiar with the theory of frames (see \cite{Pete},
\cite{Pfander} and \cite{Han Yang Wang}). Given a countable sequence $\left\{
f_{i}\right\}  _{i\in I}$ of vectors in a Hilbert space $\mathbf{H},$ we say
$\left\{  f_{i}\right\}  _{i\in I}$ forms a \textbf{frame} if and only if
there exist strictly positive real numbers $A,B$ such that for any vector
$f\in\mathbf{H}$,
\[
A\left\Vert f\right\Vert ^{2}\leq\sum_{i\in I}\left\vert \left\langle
f,f_{i}\right\rangle \right\vert ^{2}\leq B\left\Vert f\right\Vert ^{2}.
\]
In the case where $A=B$, the sequence of vectors $\left\{  f_{i}\right\}
_{i\in I}$ forms what is called a \textbf{tight frame}, and if $A=B=1$,
$\left\{  f_{i}\right\}  _{i\in I}$ is called a \textbf{Parseval frame}
because it satisfies the Parseval equality:%
\[
\sum_{i\in I}\left\vert \left\langle f,f_{i}\right\rangle \right\vert
^{2}=\left\Vert f\right\Vert ^{2}\text{ for all }f\in\mathbf{H.}%
\]
Also, if $\left\{  f_{i}\right\}  _{i\in I}$ is a Parseval frame such that for
all $i\in I,\left\Vert f_{i}\right\Vert =1$ then $\left\{  f_{i}\right\}
_{i\in I}$ is an orthonormal basis for $\mathbf{H}$.

Let $\Xi=A\mathbb{Z}^{2d}$ for some matrix $A$. We say $\Xi$ is a full-rank
lattice if $A$ is non-singular. We say a lattice is separable if
$\Xi=A\mathbb{Z}^{d}\times B\mathbb{Z}^{d}.\ $A \textbf{fundamental domain}
$D$ for a lattice in $\mathbb{R}^{d}$ is a measurable set which satisfies the
following: $(D+l)\cap(D+l^{\prime})\neq\emptyset$ for distinct $l,$
$l^{\prime}$ in $\Xi,$ and $\mathbb{R}^{d}={\bigcup\limits_{l\in\Xi}}\left(
D+l\right)  .$ We say $D$ is a \textbf{packing set} for $\Xi,$ if $\left(
D+l\right)  \cap\left(  D+l^{\prime}\right)  $ has Lebesgue measure zero for
any $l\neq l^{\prime}.$ Let $\Xi=A\mathbb{Z}^{d}\times B\mathbb{Z}^{d}$ be a
full-rank lattice in $\mathbb{R}^{2d}$ and $f\in L^{2}\left(  \mathbb{R}%
^{d}\right)  $. The family of functions in $L^{2}\left(  \mathbb{R}%
^{d}\right)  $,
\begin{equation}
\mathcal{G}\left(  f,A\mathbb{Z}^{d}\times B\mathbb{Z}^{d}\right)  =\left\{
e^{2\pi i\left\langle k,x\right\rangle }f\left(  x-n\right)  :k\in
B\mathbb{Z}^{d},n\in A\mathbb{Z}^{d}\right\}  \label{gab}%
\end{equation}
is called a \textbf{Gabor system}. Gabor frames are a particular type of frame
whose elements are generated by time-frequency shifts of a single vector. A
Gabor system which is a Parseval frame is called a \textbf{Gabor Parseval
frame}. Let $r$ be a natural number. Let $\Xi=A\mathbb{Z}^{r}$ be a full-rank
lattice in $\mathbb{R}^{r}$. The \textbf{volume} of $\Xi$ is defined as
\[
\mathrm{vol}\left(  \Xi\right)  =\left\vert \det A\right\vert ,
\]
and the \textbf{density} of the lattice $\Xi$ is defined as $d\left(
\Xi\right)  =\left\vert \det A\right\vert ^{-1}.$

\begin{lemma}
\label{dense}(\textbf{Density Condition}) Let $\Xi=A\mathbb{Z}^{d}\times
B\mathbb{Z}^{d}$ be a full-rank lattice in $\mathbb{R}^{2d}$. There exits
$f\in L^{2}(\mathbb{R}^{d})$ such that $\mathcal{G}\left(  f,A\mathbb{Z}%
^{d}\times B\mathbb{Z}^{d}\right)  $ is a Parseval frame in $L^{2}\left(
\mathbb{R}^{d}\right)  $ if and only if $\mathrm{vol}\left(  \Xi\right)
=\left\vert \det A\det B\right\vert \leq1.$
\end{lemma}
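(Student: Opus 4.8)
The statement to prove is the Density Condition for Gabor Parseval frames (Lemma~\ref{dense}): a separable full-rank lattice $\Xi=A\mathbb{Z}^{d}\times B\mathbb{Z}^{d}$ admits a Parseval-frame Gabor generator if and only if $\left\vert\det A\det B\right\vert\leq 1$. This is a classical fact, and the cleanest route is to reduce the separable case to the integer-lattice case $\mathbb{Z}^{d}\times\mathbb{Z}^{d}$ by a metaplectic-type change of variables, then produce an explicit generator on one side and invoke density (the ``incompleteness/Rieffel'' bound) on the other.

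Let me outline the steps. First, the necessity direction: if $\mathcal{G}(f,A\mathbb{Z}^{d}\times B\mathbb{Z}^{d})$ is a Parseval frame, then every Gabor frame has a well-defined lower frame bound, and the standard density theorem for Gabor systems (Ramanathan--Steger, or the Rieffel projection/coupling-constant argument) forces the lattice volume $\left\vert\det A\det B\right\vert\leq 1$; a Parseval frame that is also an orthonormal-type spanning set is possible only at or below critical density. I would simply cite this as the density half. Second, the sufficiency direction: given $\left\vert\det A\det B\right\vert\leq 1$, I would conjugate by the dilation $D_{A}g(x)=\left\vert\det A\right\vert^{-1/2}g(A^{-1}x)$, which is unitary on $L^{2}(\mathbb{R}^{d})$ and intertwines translation by $A\mathbb{Z}^{d}$ with translation by $\mathbb{Z}^{d}$, while turning modulation by $B\mathbb{Z}^{d}$ into modulation by $A^{tr}B\mathbb{Z}^{d}$. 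Setting $C=A^{tr}B$, we have $\left\vert\det C\right\vert=\left\vert\det A\det B\right\vert\leq 1$, and it suffices to build a Parseval-frame generator for $\mathbb{Z}^{d}\times C\mathbb{Z}^{d}$.

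Third, for the lattice $\mathbb{Z}^{d}\times C\mathbb{Z}^{d}$ with $\left\vert\det C\right\vert\leq 1$: pick a measurable set $E\subseteq[0,1)^{d}$ that packs $\mathbb{R}^{d}$ by the lattice $C^{-tr}\mathbb{Z}^{d}$ (possible precisely because the fundamental domain $[0,1)^{d}$ of $\mathbb{Z}^{d}$ has volume $1\geq\left\vert\det C^{-tr}\right\vert^{-1}\cdot\left\vert\det C\right\vert\cdots$; more directly, since $\mathrm{vol}(C^{-tr}\mathbb{Z}^{d})=\left\vert\det C\right\vert^{-1}\geq 1=\mathrm{vol}(\mathbb{Z}^{d})$, one can choose $E\subseteq[0,1)^{d}$ that tiles by $\mathbb{Z}^{d}$ in the trivial way and simultaneously packs under $C^{-tr}\mathbb{Z}^{d}$). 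Take $f=\chi_{E}$. Then a direct computation of $\sum_{n\in\mathbb{Z}^{d},k\in C\mathbb{Z}^{d}}\left\vert\langle g, M_{k}T_{n}f\rangle\right\vert^{2}$: the translates $\{T_{n}f\}_{n\in\mathbb{Z}^{d}}$ are orthonormal (disjoint supports mod $\mathbb{Z}^{d}$), so the sum over $n$ localizes $g$ to each translate of $E$, and then the sum over the modulation lattice $k\in C\mathbb{Z}^{d}$ on each piece gives exactly $\|g\chi_{E+n}\|^{2}$ by the packing property applied through Plancherel on the dual group — yielding $\sum=\|g\|^{2}$. Pulling back through $D_{A}$ recovers a Parseval generator for the original lattice.

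**Main obstacle.** The necessity (density lower bound) direction is the genuinely hard input — it is the Gabor density theorem — and I would not attempt to reprove it; I would cite it from one of \cite{Pete}, \cite{Pfander}, \cite{Han Yang Wang}. On the constructive side, the only delicate point is verifying that a set $E$ with the required simultaneous tiling/packing property exists and that the frame computation really telescopes to $\|g\|^{2}$; this is a standard ``Zak transform'' or direct-integral-over-the-torus argument, and the fact that the excerpt's later construction (see~(\ref{extension})) already invokes exactly such sets $E(\lambda)$ tiling $\mathbb{R}^{d}$ by $\mathbb{Z}^{d}$ and packing by $B(\lambda)^{-tr}\mathbb{Z}^{d}$ suggests the authors will reuse precisely this mechanism. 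So in practice the proof reduces to: (i) cite density for necessity, (ii) dilate to normalize the translation lattice, (iii) exhibit $\chi_{E}$ and compute.
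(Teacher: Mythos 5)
The paper does not prove this lemma at all: immediately after the statement it says ``A proof of Lemma \ref{dense} is given in \cite{Han Yang Wang} Theorem $3.3$.'' Your outline is in fact a sketch of that cited proof --- necessity from the Gabor density theorem, sufficiency by dilating to normalize the translation lattice and then taking a multiple of $\chi_{E}$ for a set $E$ that tiles by one lattice and packs by the dual of the other --- and it is the same mechanism the paper itself reuses later (Remark \ref{remark} and the definition of $\mathbf{u}_{\lambda}$ in (\ref{extension})). So the strategy is right and matches the source.

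There is, however, one step as written that would fail, and it sits exactly at the crux of the sufficiency direction. You assert that one can choose $E\subseteq\left[0,1\right)^{d}$ that ``tiles by $\mathbb{Z}^{d}$ in the trivial way and simultaneously packs under $C^{-tr}\mathbb{Z}^{d}$.'' But a subset of $\left[0,1\right)^{d}$ whose $\mathbb{Z}^{d}$-translates cover $\mathbb{R}^{d}$ must equal $\left[0,1\right)^{d}$ up to null sets, and the unit cube need not pack by $C^{-tr}\mathbb{Z}^{d}$ even when $\left\vert\det C\right\vert\leq1$: take $d=2$ and $C=\mathrm{diag}\left(1/2,2\right)$, so $C^{-tr}\mathbb{Z}^{2}=\mathrm{diag}\left(2,1/2\right)\mathbb{Z}^{2}$ contains the vector $\left(0,1/2\right)$ and $\left[0,1\right)^{2}$ overlaps its translate by that vector in a set of measure $1/2$. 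The correct statement --- that for any two full-rank lattices with $\mathrm{vol}\left(\Xi_{1}\right)\leq\mathrm{vol}\left(\Xi_{2}\right)$ there is a single measurable set tiling by $\Xi_{1}$ and packing by $\Xi_{2}$, in general a scattered fundamental domain rather than a subset of the unit cube --- is the lattice tiling theorem of \cite{Han Yang Wang} (see also \cite{Pfander}), and it is the genuinely nontrivial input here; it cannot be obtained ``in the trivial way.'' A second, minor, slip: with $f=\chi_{E}$ the computation yields $\left\vert\det C\right\vert^{-1}\left\Vert g\chi_{E+n}\right\Vert^{2}$ on each piece, so the correct generator is $f=\left\vert\det C\right\vert^{1/2}\chi_{E}$, consistent with Lemma \ref{ONB copy(1)}'s normalization $\left\Vert f\right\Vert^{2}=\mathrm{vol}\left(\Xi\right)$ and with the factor $\left\vert\det B\left(\lambda\right)\right\vert^{1/2}$ appearing in (\ref{extension}).
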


A proof of Lemma \ref{dense} is given in \cite{Han Yang Wang} Theorem $3.3.$

\begin{lemma}
\label{ONB copy(1)} Let $\Xi$ be a full-rank lattice in $\mathbb{R}^{2d}$.
There exists $f\in L^{2}\left(  \mathbb{R}^{d}\right)  $ such that
$\mathcal{G}\left(  f,\Xi\right)  $ is an orthonormal basis if and only if
$\mathrm{vol}\left(  \Xi\right)  =1.$ Also, if $\mathcal{G}\left(
f,\Xi\right)  $ is a Parseval frame for $L^{2}(\mathbb{R}^{d})$, then $\Vert
f\Vert^{2}=\mathrm{vol}(\Xi).$
\end{lemma}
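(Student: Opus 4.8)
The plan is to deduce everything from Lemma \ref{dense}, the elementary theory of frames collected above, and the relation between Parseval-frame norms and lattice volume. First I would prove the norm identity. Suppose $\mathcal{G}(f,\Xi)$ is a Parseval frame for $L^2(\mathbb{R}^d)$ with $\Xi = A\mathbb{Z}^d\times B\mathbb{Z}^d$. Applying the Parseval identity to $f$ itself gives $\|f\|^2=\sum_{k,n}|\langle f, M_{Bk}T_{An}f\rangle|^2$, where $M$ and $T$ denote the modulation and translation operators; but in fact the cleaner route is to apply the Parseval identity to an arbitrary $g$ and integrate, or simply to recall that for any Parseval frame $\{f_i\}_{i\in I}$ one has $\sum_i\|f_i\|^2 = \dim \mathbf{H}$ when $\mathbf{H}$ is finite-dimensional, and in the Gabor case the frame operator being the identity forces, via the Walnut/Janssen representation or a direct computation on the fundamental domain, that $\|f\|^2 = \mathrm{vol}(\Xi)$. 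I would give the short direct argument: since all frame elements $M_{Bk}T_{An}f$ are unitary images of $f$, they share the common norm $\|f\|$; the Parseval bound $A=B=1$ in Lemma \ref{dense} together with the fact (also from \cite{Han Yang Wang}) that the optimal frame bound of a tight Gabor system equals $\|f\|^2/\mathrm{vol}(\Xi)$ yields $\|f\|^2/\mathrm{vol}(\Xi) = 1$, i.e.\ $\|f\|^2 = \mathrm{vol}(\Xi)$.

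Granting the norm identity, the equivalence is immediate in both directions. If $\mathrm{vol}(\Xi)=1$: by Lemma \ref{dense} (with $|\det A\det B| = 1 \le 1$) there exists $f\in L^2(\mathbb{R}^d)$ with $\mathcal{G}(f,\Xi)$ a Parseval frame; by the norm identity just proved, $\|f\|^2 = \mathrm{vol}(\Xi) = 1$, so every element of $\mathcal{G}(f,\Xi)$ has norm one (being a unitary image of $f$); and a Parseval frame all of whose vectors are unit vectors is an orthonormal basis, as recorded in the paragraph preceding Lemma \ref{dense}. Conversely, if $\mathcal{G}(f,\Xi)$ is an orthonormal basis, then in particular it is a Parseval frame with $\|f\|=1$, so the norm identity forces $\mathrm{vol}(\Xi) = \|f\|^2 = 1$.

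The last sentence of the lemma is exactly the norm identity established in the first step, so no further work is needed there. The main obstacle is the first step, the identity $\|f\|^2 = \mathrm{vol}(\Xi)$ for a Parseval Gabor frame: one must either invoke the appropriate statement from \cite{Han Yang Wang} about the frame bound of tight Gabor systems scaling like $\mathrm{vol}(\Xi)^{-1}$, or carry out the standard computation — expand $\sum_{k\in B\mathbb{Z}^d}|\langle g, M_{Bk}T_{An}f\rangle|^2$ using the Fourier series on a fundamental domain of $B\mathbb{Z}^d$, sum over $n\in A\mathbb{Z}^d$, and compare with $\|g\|^2$ to read off that the frame operator equals $\mathrm{vol}(\Xi)^{-1}\cdot\|f\|^2$ times the identity when the system is tight. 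Everything else is a routine assembly of Lemma \ref{dense} and the elementary frame facts already stated.
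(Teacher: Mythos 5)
Your proposal is correct. The paper gives no argument of its own for this lemma --- it simply cites Theorem $1.3$ and the proof of Lemma $3.2$ of \cite{Han Yang Wang} --- and your assembly of the density condition (Lemma \ref{dense}), the norm identity $\Vert f\Vert^{2}=\mathrm{vol}\left(  \Xi\right)  $ for Parseval Gabor frames, and the stated fact that a unit-norm Parseval frame is an orthonormal basis is exactly the content of that citation, so it matches the intended proof.
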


Lemma \ref{ONB copy(1)} is due to Theorem $1.3$ and the proof of Lemma $3.2$
is given in \cite{Han Yang Wang}. Next, from the definition of the irreducible
representations of $N,$ provided in Proposition \ref{analysis}, it is easy to
see that for $f\in L^{2}\left(
\mathbb{R}
^{d}\right)  ,$ $\pi_{\lambda}\left(  \Gamma_{\mathfrak{b}}\Gamma
_{\mathfrak{a}}\right)  f$ is a Gabor system for each fixed $\lambda\in
\Sigma.$ Moreover, following the notation given in (\ref{gab}), we write:
\[
\pi_{\lambda}\left(  \Gamma_{\mathfrak{b}}\Gamma_{\mathfrak{a}}\right)
f\text{ }\mathbf{=}\text{ }\mathcal{G}\left(  f,\mathbb{Z}^{d}\times B\left(
\lambda\right)  \mathbb{Z}^{d}\right)  .
\]
We recall that the set $\mathbf{C}$ satisfies the following conditions:
$\mathbf{C}\subset\mathfrak{z}^{\ast}=%
\mathbb{R}
^{n-2d}$ is a bounded set such that the system
\[
\left\{  e^{2\pi i\left\langle k,\lambda\right\rangle }\chi_{\mathbf{C}%
}\left(  \lambda\right)  :k\in%
\mathbb{Z}
^{n-2d}\right\}
\]
is a Parseval frame for $L^{2}\left(  \mathbf{C,}d\lambda\right)  .$ Also, we
recall that
\[
\mathbf{E}=\left\{  \lambda\in\mathfrak{z}^{\ast}:\left\vert \det B\left(
\lambda\right)  \right\vert \neq0,\text{ and }\left\vert \det B\left(
\lambda\right)  \right\vert \leq1\right\}  .
\]
Therefore,
\[
\left\{  e^{2\pi i\left\langle k,\lambda\right\rangle }\chi_{\mathbf{E\cap C}%
}\left(  \lambda\right)  :k\in%
\mathbb{Z}
^{n-2d}\right\}
\]
is a Parseval frame for the Hilbert space $L^{2}\left(  \mathbf{E\cap
C},d\lambda\right)  .$

\begin{lemma}
\label{suff}There exists a function $\phi\in\mathbf{H}\left(  \mathbf{e,E\cap
C}\right)  $ such that $L\left(  \Gamma\right)  \phi$ is a Parseval frame in
$\mathbf{H}\left(  \mathbf{e,E\cap C}\right)  .$
\end{lemma}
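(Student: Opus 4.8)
The plan is to pull the whole problem back, via the Plancherel transform, to a concrete statement about Gabor systems and exponentials, and then invoke the density lemmas already in hand. First I would use Proposition \ref{analysis} to identify $\mathbf{H}\left(  \mathbf{e,E\cap C}\right)$ with $L^{2}\left(  \mathbf{E\cap C},\left\vert \det B(\lambda)\right\vert d\lambda\right)  \otimes$ (a rank-one slot), concretely with the space of measurable fields $\lambda\mapsto\mathbf{w}_{\lambda}^{\phi}\otimes\mathbf{e}_{\lambda}$; and I would compute the action of $L(\gamma)$ for $\gamma=\gamma_{\mathfrak{z}}\gamma_{\mathfrak{b}}\gamma_{\mathfrak{a}}\in\Gamma$ on $\mathcal{P}\phi$. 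Using the intertwining relation $\mathcal{P}(L(x)\phi)(\lambda)=\pi_{\lambda}(x)\circ\mathcal{P}\phi(\lambda)$ together with the explicit formulas for $\pi_\lambda$, the central part $\gamma_{\mathfrak{z}}=\exp(\sum k_i Z_i)$ contributes the scalar $e^{2\pi i\langle k,\lambda\rangle}$ and the part $\gamma_{\mathfrak{b}}\gamma_{\mathfrak{a}}$ acts inside $L^{2}(\mathbb{R}^d)$ as the time-frequency shift generating the Gabor system $\mathcal{G}\left(\mathbf{w}_\lambda^\phi,\mathbb{Z}^d\times B(\lambda)\mathbb{Z}^d\right)$. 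Since $L$ commutes with the rank-one $\mathbf{e}_\lambda$ slot, the frame inequality for $L(\Gamma)\phi$ in $\mathbf{H}(\mathbf{e,E\cap C})$ factors, fiberwise, into the product of a Gabor-frame condition on $\mathbb{R}^d$ and the exponential-frame condition on $L^2(\mathbf{E\cap C})$.

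Next I would choose $\phi$ fiberwise. For each $\lambda\in\mathbf{E\cap C}$, we have $\left\vert\det B(\lambda)\right\vert\le 1$, so $\mathrm{vol}\left(\mathbb{Z}^d\times B(\lambda)\mathbb{Z}^d\right)=\left\vert\det B(\lambda)\right\vert\le 1$, and Lemma \ref{dense} guarantees the existence of a generator $g_\lambda\in L^2(\mathbb{R}^d)$ with $\mathcal{G}\left(g_\lambda,\mathbb{Z}^d\times B(\lambda)\mathbb{Z}^d\right)$ a Parseval frame for $L^2(\mathbb{R}^d)$; one can normalize so that $\left\Vert g_\lambda\right\Vert^2=\left\vert\det B(\lambda)\right\vert$ (this is consistent with Lemma \ref{ONB copy(1)}, and makes the eventual norm bookkeeping work out). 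Setting $\mathbf{w}_\lambda^\phi=g_\lambda$ and $\mathcal{P}\phi(\lambda)=g_\lambda\otimes\mathbf{e}_\lambda$ on $\mathbf{E\cap C}$, zero elsewhere, gives a bona fide element of $\mathbf{H}(\mathbf{e,E\cap C})$ provided $\lambda\mapsto g_\lambda$ can be chosen measurably; I would address measurability either by an explicit choice of $g_\lambda$ (e.g. a normalized indicator of a fundamental-domain/packing set depending measurably on $B(\lambda)$, as in the construction leading to Theorem \ref{steps}) or by a standard measurable-selection argument, noting that $\lambda\mapsto B(\lambda)$ is polynomial hence measurable. Then for arbitrary $\psi\in\mathbf{H}(\mathbf{e,E\cap C})$, writing $\mathcal{P}\psi(\lambda)=\mathbf{w}_\lambda^\psi\otimes\mathbf{e}_\lambda$, one computes
\[
\sum_{\gamma\in\Gamma}\left\vert\left\langle\psi,L(\gamma)\phi\right\rangle\right\vert^2
=\int_{\mathbf{E\cap C}}\sum_{k\in\mathbb{Z}^{n-2d}}\left\vert e^{2\pi i\langle k,\lambda\rangle}\right\vert^2\Big(\sum_{\gamma_1\in\Gamma_{\mathfrak{b}}\Gamma_{\mathfrak{a}}}\left\vert\left\langle\mathbf{w}_\lambda^\psi,\pi_\lambda(\gamma_1)g_\lambda\right\rangle\right\vert^2\Big)\,h(\lambda)\,d\lambda,
\]
where $h$ collects the Jacobian factors; applying the Gabor Parseval identity fiberwise turns the inner sum into $\left\Vert\mathbf{w}_\lambda^\psi\right\Vert^2$ (up to the $\left\vert\det B(\lambda)\right\vert$ normalization), and then the exponential Parseval frame property of $\left\{e^{2\pi i\langle k,\lambda\rangle}\chi_{\mathbf{E\cap C}}\right\}_{k\in\mathbb{Z}^{n-2d}}$ on $L^2(\mathbf{E\cap C})$ collapses the $k$-sum, yielding exactly $\left\Vert\psi\right\Vert^2_{\mathbf{H}(\mathbf{e,E\cap C})}$.

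The main obstacle I expect is not the algebra of the fiberwise computation — that is a bookkeeping exercise in matching the Plancherel Jacobian $\left\vert\det B(\lambda)\right\vert$ against the Gabor normalization $\left\Vert g_\lambda\right\Vert^2=\left\vert\det B(\lambda)\right\vert$ — but rather the measurable dependence of the Gabor generator $g_\lambda$ on $\lambda$, and the justification that the interchange of $\sum_{\gamma\in\Gamma}$ with $\int_{\mathbf{E\cap C}}$ and the fiberwise sums is legitimate (Tonelli applies since everything is nonnegative, so this is really just a matter of stating it cleanly). I would therefore spend most of the write-up on (i) exhibiting a concrete measurable field $\lambda\mapsto g_\lambda$ — most transparently, choosing $g_\lambda=\left\vert\det B(\lambda)\right\vert^{1/2}\chi_{E(\lambda)}$ where $E(\lambda)$ tiles $\mathbb{R}^d$ by $\mathbb{Z}^d$ and packs $\mathbb{R}^d$ by $B(\lambda)^{-tr}\mathbb{Z}^d$, whose existence and measurable dependence follow from the remark cited before Theorem \ref{steps} — and (ii) writing the fiberwise Parseval computation above with the Jacobians tracked explicitly, so that the two Parseval identities (Gabor in $L^2(\mathbb{R}^d)$, exponential in $L^2(\mathbf{E\cap C})$) visibly combine to give $\left\Vert\psi\right\Vert^2$. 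This proves $L(\Gamma)\phi$ is a Parseval frame for $\mathbf{H}(\mathbf{e,E\cap C})$, as claimed.
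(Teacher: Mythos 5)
Your strategy is the paper's own: apply the density theorem (Lemma \ref{dense}) fiberwise to get a Gabor Parseval frame generator for $\mathcal{G}\left(  \cdot,\mathbb{Z}^{d}\times B\left(  \lambda\right)  \mathbb{Z}^{d}\right)  $ at each $\lambda\in\mathbf{E\cap C}$ (with measurability handled exactly as in Remark \ref{remark}), split $\gamma=\gamma_{\mathfrak{z}}\gamma_{1}$, and combine the fiberwise Gabor Parseval identity with the Parseval frame of exponentials on $L^{2}\left(  \mathbf{E\cap C},d\lambda\right)  $. However, the normalization of $\phi$ you propose is wrong, and with it the Parseval identity fails. You set $\mathcal{P}\phi\left(  \lambda\right)  =g_{\lambda}\otimes\mathbf{e}_{\lambda}$, whereas the correct choice is $\mathcal{P}\phi\left(  \lambda\right)  =\left\vert \det B\left(  \lambda\right)  \right\vert ^{-1/2}g_{\lambda}\otimes\mathbf{e}_{\lambda}$, and this factor is forced: the pairing $\left\langle \psi,L\left(  \gamma\right)  \phi\right\rangle $ is an integral against the Plancherel measure $\left\vert \det B\left(  \lambda\right)  \right\vert d\lambda$, so if $\mathcal{P}\phi\left(  \lambda\right)  =c\left(  \lambda\right)  g_{\lambda}\otimes\mathbf{e}_{\lambda}$, then after applying the exponential Parseval identity to the sum over the central parameter and the Gabor Parseval identity to the sum over $\gamma_{1}$ one obtains $\sum_{\gamma}\left\vert \left\langle \psi,L\left(  \gamma\right)  \phi\right\rangle \right\vert ^{2}=\int_{\mathbf{E\cap C}}\left\Vert \mathbf{w}_{\lambda}^{\psi}\right\Vert ^{2}\left\vert c\left(  \lambda\right)  \right\vert ^{2}\left\vert \det B\left(  \lambda\right)  \right\vert ^{2}d\lambda$. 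Matching this against $\left\Vert \psi\right\Vert ^{2}=\int_{\mathbf{E\cap C}}\left\Vert \mathbf{w}_{\lambda}^{\psi}\right\Vert ^{2}\left\vert \det B\left(  \lambda\right)  \right\vert d\lambda$ forces $\left\vert c\left(  \lambda\right)  \right\vert =\left\vert \det B\left(  \lambda\right)  \right\vert ^{-1/2}$; with your $c\equiv1$ you get $\int\left\Vert \mathbf{w}_{\lambda}^{\psi}\right\Vert ^{2}\left\vert \det B\left(  \lambda\right)  \right\vert ^{2}d\lambda$, which is strictly smaller than $\left\Vert \psi\right\Vert ^{2}$ for generic $\psi$ since $\left\vert \det B\right\vert \leq1$ on $\mathbf{E}$. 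Note that $\left\Vert g_{\lambda}\right\Vert ^{2}=\left\vert \det B\left(  \lambda\right)  \right\vert $ is not an adjustable normalization: it is automatic for any Gabor Parseval generator by Lemma \ref{ONB copy(1)}, and rescaling $g_{\lambda}$ would destroy the Parseval property of the Gabor system. The only free scalar is the one in front of $\mathcal{P}\phi\left(  \lambda\right)  $, and you have omitted it.

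A second, smaller defect: your displayed identity contains $\sum_{k\in\mathbb{Z}^{n-2d}}\left\vert e^{2\pi i\left\langle k,\lambda\right\rangle }\right\vert ^{2}$ under the integral sign, which is identically $+\infty$. The exponential Parseval property does not factor pointwise inside the $\lambda$-integral; it must be applied in the form $\sum_{k}\left\vert \int_{\mathbf{E\cap C}}e^{-2\pi i\left\langle k,\lambda\right\rangle }F\left(  \lambda\right)  d\lambda\right\vert ^{2}=\left\Vert F\right\Vert _{L^{2}\left(  \mathbf{E\cap C},d\lambda\right)  }^{2}$ to the entire function $F\left(  \lambda\right)  =\left\langle \mathcal{P}\psi\left(  \lambda\right)  ,\pi_{\lambda}\left(  \gamma_{1}\right)  \mathcal{P}\phi\left(  \lambda\right)  \right\rangle _{\mathcal{HS}}\left\vert \det B\left(  \lambda\right)  \right\vert $ before any pointwise manipulation. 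Your prose describes the right mechanism, but this display is exactly where the Jacobian bookkeeping you defer actually lives, and carrying it out honestly is what exposes the missing $\left\vert \det B\left(  \lambda\right)  \right\vert ^{-1/2}$.
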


\begin{proof}
We know that by the Density Condition (see Lemma \ref{dense}), for $\lambda
\in\mathbf{E\cap C},$ there exists rank-one operators $T_{\lambda}%
=\mathbf{u}_{\lambda}\otimes\mathbf{e}_{\lambda}$ such that
\begin{equation}
\mathcal{P}\phi\left(  \lambda\right)  =\frac{T_{\lambda}}{\sqrt{\left\vert
\det B\left(  \lambda\right)  \right\vert }},
\end{equation}
and the system $\mathcal{G}\left(  \mathbf{u}_{\lambda},%
\mathbb{Z}
^{d}\times B\left(  \lambda\right)
\mathbb{Z}
^{d}\right)  $ is a Gabor Parseval frame in $L^{2}\left(
\mathbb{R}
^{d}\right)  $. Next, given any vector $\psi\in\mathbf{H}\left(
\mathbf{e,E\cap C}\right)  ,$ we obtain that
\begin{align}
&  \sum_{\gamma\in\Gamma}\left\vert \left\langle \psi,L\left(  \gamma\right)
\phi\right\rangle _{\mathbf{H}\left(  \mathbf{e,E\cap C}\right)  }\right\vert
^{2}\nonumber\\
&  =\sum_{\gamma\in\Gamma}\left\vert \int_{\mathbf{E\cap C}}\left\langle
\mathcal{P}\psi\left(  \lambda\right)  ,\pi_{\lambda}\left(  \gamma\right)
\circ\mathcal{P}\phi\left(  \lambda\right)  \right\rangle _{\mathcal{HS}}%
d\mu\left(  \lambda\right)  \right\vert ^{2}\nonumber\\
&  =\sum_{\gamma\in\Gamma}\left\vert \int_{\mathbf{E\cap C}}\left\langle
\mathcal{P}\psi\left(  \lambda\right)  ,\pi_{\lambda}\left(  \gamma\right)
\left(  \left\vert \det B\left(  \lambda\right)  \right\vert ^{-1/2}%
\mathbf{u}_{\lambda}\right)  \otimes\mathbf{e}_{\lambda}\right\rangle
_{\mathcal{HS}}\left\vert \det B\left(  \lambda\right)  \right\vert
d\lambda\right\vert ^{2}\nonumber\\
&  =\sum_{\gamma\in\Gamma}\left\vert \int_{\mathbf{E\cap C}}\left\langle
\mathcal{P}\psi\left(  \lambda\right)  ,\pi_{\lambda}\left(  \gamma\right)
\mathbf{u}_{\lambda}\otimes\mathbf{e}_{\lambda}\right\rangle _{\mathcal{HS}%
}\left\vert \det B\left(  \lambda\right)  \right\vert ^{1/2}d\lambda
\right\vert ^{2}. \label{star}%
\end{align}
Using the fact that
\[
\left\{  e^{2\pi i\left\langle k,\lambda\right\rangle }\chi_{\mathbf{E\cap C}%
}\left(  \lambda\right)  :k\in%
\mathbb{Z}
^{n-2d}\right\}
\]
is a Parseval frame for $L^{2}\left(  \mathbf{E\cap C},d\lambda\right)  ,$ and
letting
\begin{equation}
\mathbf{f}\left(  \lambda\right)  =\left\langle \mathcal{P}\psi\left(
\lambda\right)  ,\pi_{\lambda}\left(  \gamma_{1}\right)  \mathbf{u}_{\lambda
}\otimes\mathbf{e}_{\lambda}\right\rangle _{\mathcal{HS}}\left\vert \det
B\left(  \lambda\right)  \right\vert ^{1/2}, \label{form}%
\end{equation}
we obtain%
\begin{align*}
\sum_{\gamma\in\Gamma}\left\vert \left\langle \psi,L\left(  \gamma\right)
\phi\right\rangle _{\mathbf{H}\left(  \mathbf{e,E\cap C}\right)  }\right\vert
^{2}  &  =\sum_{\gamma_{1}\in\Gamma_{\mathfrak{b}}\Gamma_{\mathfrak{a}}}%
{\displaystyle\sum\limits_{m\in\mathbb{Z}^{n-2d}}}
\left\vert \int_{\mathbf{E\cap C}}e^{-2\pi i\left\langle \lambda
,m\right\rangle }\mathbf{f}\left(  \lambda\right)  \right\vert ^{2}d\lambda\\
&  =\sum_{\gamma_{1}\in\Gamma_{\mathfrak{b}}\Gamma_{\mathfrak{a}}}%
{\displaystyle\sum\limits_{m\in\mathbb{Z}^{n-2d}}}
\left\vert \widehat{\mathbf{f}}\left(  m\right)  \right\vert ^{2}d\lambda\\
&  =\sum_{\gamma_{1}\in\Gamma_{\mathfrak{b}}\Gamma_{\mathfrak{a}}}\left\Vert
\widehat{\mathbf{f}}\right\Vert _{l^{2}\left(
\mathbb{Z}
^{n-2d}\right)  }^{2}\\
&  =\sum_{\gamma_{1}\in\Gamma_{\mathfrak{b}}\Gamma_{\mathfrak{a}}}\left\Vert
\mathbf{f}\right\Vert _{L^{2}\left(  \mathbf{E\cap C,}d\lambda\right)  }^{2}.
\end{align*}
The last equality above is due to the Plancherel Theorem on $l^{2}\left(
{\mathbb{Z}^{n-2d}}\right)  .$ Using Equation (\ref{form}), letting
$\mathcal{P}\psi\left(  \lambda\right)  =\mathbf{w}_{\lambda}^{\psi}%
\otimes\mathbf{e}_{\lambda},$ where $\mathbf{w}_{\lambda}^{\psi}%
=\mathbf{w}_{\lambda}\in L^{2}\left(
\mathbb{R}
^{d}\right)  ,$ and coming back to (\ref{star}), it follows that:
\begin{align*}
&  \sum_{\gamma\in\Gamma}\left\vert \left\langle \psi,L\left(  \gamma\right)
\phi\right\rangle _{\mathbf{H}\left(  \mathbf{e,E\cap C}\right)  }\right\vert
^{2}\\
&  =\sum_{\gamma\in\Gamma}\left\vert \int_{\mathbf{E\cap C}}\left\langle
\mathcal{P}\psi\left(  \lambda\right)  ,\pi_{\lambda}\left(  \gamma\right)
\mathbf{u}_{\lambda}\otimes\mathbf{e}_{\lambda}\right\rangle _{\mathcal{HS}%
}\left\vert \det B\left(  \lambda\right)  \right\vert ^{1/2}d\lambda
\right\vert ^{2}\\
&  =\sum_{\gamma_{1}\in\Gamma_{\mathfrak{b}}\Gamma_{\mathfrak{a}}}%
\int_{\mathbf{E\cap C}}\left\vert \left\langle \mathbf{w}_{\lambda}%
\otimes\mathbf{e}_{\lambda},\pi_{\lambda}\left(  \gamma_{1}\right)
\mathbf{u}_{\lambda}\otimes\mathbf{e}_{\lambda}\right\rangle _{\mathcal{HS}%
}\right\vert ^{2}\left\vert \det B\left(  \lambda\right)  \right\vert
d\lambda\\
&  =\int_{\mathbf{E\cap C}}\sum_{\gamma_{1}\in\Gamma_{\mathfrak{b}}%
\Gamma_{\mathfrak{a}}}\left\vert \left\langle \mathbf{w}_{\lambda}%
\otimes\mathbf{e}_{\lambda},\pi_{\lambda}\left(  \gamma_{1}\right)
\mathbf{u}_{\lambda}\otimes\mathbf{e}_{\lambda}\right\rangle _{\mathcal{HS}%
}\right\vert ^{2}\left\vert \det B\left(  \lambda\right)  \right\vert
d\lambda\\
&  =\int_{\mathbf{E\cap C}}\sum_{\gamma_{1}\in\Gamma_{\mathfrak{b}}%
\Gamma_{\mathfrak{a}}}\left\vert \left\langle \mathbf{w}_{\lambda}%
,\pi_{\lambda}\left(  \gamma_{1}\right)  \mathbf{u}_{\lambda}\right\rangle
_{L^{2}\left(  \mathbb{R}^{d}\right)  }\right\vert ^{2}\left\vert \det
B\left(  \lambda\right)  \right\vert d\lambda.
\end{align*}
Since $\pi_{\lambda}\left(  \Gamma_{\mathfrak{b}}\Gamma_{\mathfrak{a}}\right)
\mathbf{u}_{\lambda}$ is a Parseval Gabor frame for each fixed $\lambda
\in\mathbf{E\cap C}$%
\[
\sum_{\gamma_{1}\in\Gamma_{\mathfrak{b}}\Gamma_{\mathfrak{a}}}\left\vert
\left\langle \mathbf{w}_{\lambda},\pi_{\lambda}\left(  \gamma_{1}\right)
\mathbf{u}_{\lambda}\right\rangle _{L^{2}\left(  \mathbb{R}^{d}\right)
}\right\vert ^{2}=\left\Vert \mathbf{w}_{\lambda}\right\Vert _{L^{2}\left(
\mathbb{R}^{d}\right)  }^{2},
\]
then%
\begin{align*}
\sum_{\gamma\in\Gamma}\left\vert \left\langle \psi,L\left(  \gamma\right)
\phi\right\rangle _{\mathbf{H}\left(  \mathbf{e,E\cap C}\right)  }\right\vert
^{2}  &  =\int_{\mathbf{E\cap C}}\left\Vert \mathbf{w}_{\lambda}\right\Vert
_{L^{2}\left(  \mathbb{R}^{d}\right)  }^{2}\left\vert \det B\left(
\lambda\right)  \right\vert d\lambda\\
&  =\int_{\mathbf{E\cap C}}\left\Vert \mathcal{P}\psi\left(  \lambda\right)
\right\Vert _{\mathcal{HS}}^{2}\left\vert \det B\left(  \lambda\right)
\right\vert d\lambda\\
&  =\left\Vert \psi\right\Vert _{\mathbf{H}\left(  \mathbf{e,E\cap C}\right)
}^{2}.
\end{align*}
Finally, we obtain that $L\left(  \Gamma\right)  \phi$ is a Parseval frame in
$\mathbf{H}\left(  \mathbf{e,E\cap C}\right)  .$
\end{proof}

\begin{lemma}
\label{const} If $L\left(  \Gamma\right)  \phi$ is a Parseval frame in
$\mathbf{H}\left(  \mathbf{e,E\cap C}\right)  $ as described in Lemma
\ref{suff} and if
\[
\int_{\mathbf{E\cap C}}\left\vert \det B\left(  \lambda\right)  \right\vert
d\lambda=1
\]
then $L\left(  \Gamma\right)  \phi$ is an orthonormal basis.

\begin{proof}
Recall from Lemma \ref{suff} that for $\lambda\in\mathbf{E\cap C},$
\[
\mathcal{P}\phi\left(  \lambda\right)  =\left\vert \det B\left(
\lambda\right)  \right\vert ^{-1/2}\mathbf{u}_{\lambda}\otimes\mathbf{e}%
_{\lambda},
\]
such that $\mathcal{G}\left(  \mathbf{u}_{\lambda},%
\mathbb{Z}
^{d}\times B\left(  \lambda\right)
\mathbb{Z}
^{d}\right)  $ is a Gabor Parseval frame in $L^{2}\left(
\mathbb{R}
^{d}\right)  $. Referring to the proof of Theorem $1.3$ in \cite{Han Yang
Wang}, $\left\Vert \mathbf{u}_{\lambda}\right\Vert _{L^{2}\left(
\mathbb{R}
^{d}\right)  }^{2}=\left\vert \det B\left(  \lambda\right)  \right\vert $ for
$\lambda\in\mathbf{E\cap C}$. Now,
\begin{align*}
\left\Vert \phi\right\Vert _{\mathbf{H}\left(  \mathbf{e,E\cap C}\right)
}^{2}  &  =\int_{\mathbf{E\cap C}}\left\Vert \mathcal{P}\phi\left(
\lambda\right)  \right\Vert _{\mathcal{HS}}^{2}\left\vert \det B\left(
\lambda\right)  \right\vert d\lambda\\
&  =\int_{\mathbf{E\cap C}}\left\Vert \left\vert \det B\left(  \lambda\right)
\right\vert ^{-1/2}\mathbf{u}_{\lambda}\otimes\mathbf{e}_{\lambda}\right\Vert
_{\mathcal{HS}}^{2}\left\vert \det B\left(  \lambda\right)  \right\vert
d\lambda\\
&  =\int_{\mathbf{E\cap C}}\left\Vert \mathbf{u}_{\lambda}\otimes
\mathbf{e}_{\lambda}\right\Vert _{\mathcal{HS}}^{2}d\lambda\\
&  =\int_{\mathbf{E\cap C}}\left\Vert \mathbf{u}_{\lambda}\right\Vert
_{L^{2}\left(
\mathbb{R}
^{d}\right)  }^{2}d\lambda\\
&  =\int_{\mathbf{E\cap C}}\left\vert \det B\left(  \lambda\right)
\right\vert d\lambda\\
&  =\mu\left(  \mathbf{E\cap C}\right)  =1.
\end{align*}
Since any unit-norm Parseval frame is an orthonormal basis, the proof is completed.
\end{proof}
\end{lemma}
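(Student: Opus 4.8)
The plan is to reduce the statement to the elementary criterion recorded just above, namely that a Parseval frame all of whose members have unit norm must be an orthonormal basis. Since the left regular representation $L$ is unitary, every vector $L(\gamma)\phi$ in the system $L(\Gamma)\phi$ has the same norm as $\phi$ itself, so the whole matter comes down to showing $\left\Vert \phi\right\Vert _{\mathbf{H}\left(  \mathbf{e,E\cap C}\right)  }=1$.

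To evaluate this norm I would simply unwind the definitions. The space $\mathbf{H}\left(  \mathbf{e,E\cap C}\right)  $ is, via the Plancherel transform, realized as a direct integral over $\mathbf{E\cap C}$ with respect to the measure $\left\vert \det B\left(  \lambda\right)  \right\vert d\lambda$, so
\[
\left\Vert \phi\right\Vert ^{2}=\int_{\mathbf{E\cap C}}\left\Vert \mathcal{P}\phi\left(  \lambda\right)  \right\Vert _{\mathcal{HS}}^{2}\left\vert \det B\left(  \lambda\right)  \right\vert d\lambda .
\]
By Lemma \ref{suff} the integrand involves the rank-one operator $\mathcal{P}\phi\left(  \lambda\right)  =\left\vert \det B\left(  \lambda\right)  \right\vert ^{-1/2}\mathbf{u}_{\lambda}\otimes\mathbf{e}_{\lambda}$, whose Hilbert--Schmidt norm squared equals $\left\vert \det B\left(  \lambda\right)  \right\vert ^{-1}\left\Vert \mathbf{u}_{\lambda}\right\Vert ^{2}\left\Vert \mathbf{e}_{\lambda}\right\Vert ^{2}$; since the $\mathbf{e}_{\lambda}$ are unit vectors this is $\left\vert \det B\left(  \lambda\right)  \right\vert ^{-1}\left\Vert \mathbf{u}_{\lambda}\right\Vert ^{2}$.

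The only step that is not pure bookkeeping is the value of $\left\Vert \mathbf{u}_{\lambda}\right\Vert ^{2}$, and for this I would invoke the second assertion of Lemma \ref{ONB copy(1)}: because $\mathcal{G}\left(  \mathbf{u}_{\lambda},\mathbb{Z}^{d}\times B\left(  \lambda\right)  \mathbb{Z}^{d}\right)  $ is a Gabor Parseval frame for $L^{2}\left(  \mathbb{R}^{d}\right)  $, its generator satisfies $\left\Vert \mathbf{u}_{\lambda}\right\Vert ^{2}=\mathrm{vol}\left(  \mathbb{Z}^{d}\times B\left(  \lambda\right)  \mathbb{Z}^{d}\right)  =\left\vert \det B\left(  \lambda\right)  \right\vert$. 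Hence $\left\Vert \mathcal{P}\phi\left(  \lambda\right)  \right\Vert _{\mathcal{HS}}^{2}=1$ for almost every $\lambda\in\mathbf{E\cap C}$, so $\left\Vert \phi\right\Vert ^{2}=\int_{\mathbf{E\cap C}}\left\vert \det B\left(  \lambda\right)  \right\vert d\lambda$, which equals $1$ by hypothesis. Combining this with the unitarity of $L$ (which transports the unit norm to every frame element) and the unit-norm Parseval-frame criterion completes the argument. There is essentially no obstacle here; the only thing to watch is the interplay of the two measures $d\lambda$ and $\left\vert \det B\left(  \lambda\right)  \right\vert d\lambda$, arranged precisely so that the factor $\left\vert \det B\left(  \lambda\right)  \right\vert ^{-1}$ coming from the normalization of $\mathcal{P}\phi$ cancels the factor $\left\vert \det B\left(  \lambda\right)  \right\vert$ coming from $\left\Vert \mathbf{u}_{\lambda}\right\Vert ^{2}$.
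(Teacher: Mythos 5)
Your proposal is correct and follows essentially the same route as the paper: both arguments reduce to the unit-norm Parseval frame criterion and compute $\left\Vert \phi\right\Vert ^{2}$ by combining the direct-integral formula with the identity $\left\Vert \mathbf{u}_{\lambda}\right\Vert ^{2}=\left\vert \det B\left(  \lambda\right)  \right\vert$ from the Han--Wang result (which you cite via Lemma \ref{ONB copy(1)} rather than directly from the source, an immaterial difference). Your explicit remark that unitarity of $L$ transports the unit norm to every frame element is a small clarification the paper leaves implicit.
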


\begin{remark}
\label{remark}Theorem $3.3$ in \cite{Pfander} guarantees that for each
$\lambda\in\mathbf{E\cap C},$ it is possible to pick
\[
\mathbf{u}_{\lambda}=\left\vert \det B\left(  \lambda\right)  \right\vert
^{1/2}\chi_{E\left(  \lambda\right)  }%
\]
such that $E\left(  \lambda\right)  $ tiles $%
\mathbb{R}
^{d}$ by $%
\mathbb{Z}
^{d}$ and packs $%
\mathbb{R}
^{d}$ by $B\left(  \lambda\right)  ^{-tr}%
\mathbb{Z}
^{d}$ and $\mathcal{G}\left(  \mathbf{u}_{\lambda},%
\mathbb{Z}
^{d}\times B\left(  \lambda\right)
\mathbb{Z}
^{d}\right)  $ is a Gabor Parseval frame in $L^{2}\left(
\mathbb{R}
^{d}\right)  $.
\end{remark}

\section{Proof of Results and Examples}

Recall that
\[
\mathfrak{n=a\oplus b\oplus z},
\]
$\left[  \mathfrak{a},\mathfrak{b}\right]  \subseteq\mathfrak{z,}$
$\mathfrak{a},\mathfrak{b},\mathfrak{z}$ are abelian algebras, $\dim_{%
\mathbb{R}
}\left(  \mathfrak{a}\right)  =\dim_{%
\mathbb{R}
}\left(  \mathfrak{b}\right)  =d,$ and
\[
\det\left(  \left[  \left[  X_{i},Y_{j}\right]  \right]  _{1\leq i,j\leq
d}\right)
\]
is a non-vanishing polynomial in the unknowns $Z_{1},\cdots,Z_{n-2d}.$ Also,
we recall that
\[
B\left(  \lambda\right)  =\left[
\begin{array}
[c]{ccc}%
\lambda\left[  X_{1},Y_{1}\right]  & \cdots & \lambda\left[  X_{1}%
,Y_{d}\right] \\
\vdots & \ddots & \vdots\\
\lambda\left[  X_{d},Y_{1}\right]  & \cdots & \lambda\left[  X_{d}%
,Y_{d}\right]
\end{array}
\right]
\]
and%
\[
d\mu\left(  \lambda\right)  =\left\vert \det B\left(  \lambda\right)
\right\vert d\lambda.
\]
Moreover, the unitary dual of $N$ is parametrized by the smooth manifold
\[
\Sigma=\left\{
\begin{array}
[c]{c}%
\lambda\in\mathfrak{n}^{\ast}:\det\left(  B\left(  \lambda\right)  \right)
\neq0,\lambda\left(  X_{1}\right)  =\cdots\\
=\lambda\left(  X_{d}\right)  =\lambda\left(  Y_{1}\right)  =\cdots
=\lambda\left(  Y_{d}\right)  =0
\end{array}
\right\}
\]
which is naturally identified with a Zariski open subset of $\mathfrak{z}%
^{\ast}.$

\begin{definition}
Let $\left(  \pi,\mathbf{H}_{\pi}\right)  $ denote a strongly continuous
unitary representation of a locally compact group $G.$ We say that the
representation $\left(  \pi,\mathbf{H}_{\pi}\right)  $ is admissible if and
only if the map $W_{\phi}:\mathbf{H\rightarrow}$ $L^{2}\left(  G\right)  ,$
$W_{\phi}\psi\left(  x\right)  =\left\langle \psi,\pi\left(  x\right)
\phi\right\rangle $ defines an isometry of $\mathbf{H}$ into $L^{2}\left(
G\right)  ,$ and we say that $\phi$ is an admissible vector or a continuous wavelet.
\end{definition}

\begin{proposition}
\label{sinc1} Let $\Gamma$ be a discrete subset of $G.$ Let $\phi$ be an
admissible vector for $\left(  \pi,\mathbf{H}_{\pi}\right)  $ such that
$\pi\left(  \Gamma\right)  \phi$ is a Parseval frame for $\mathbf{H}_{\pi}.$
Then $\mathbf{K}=W_{\phi}\left(  \mathbf{H}_{\pi}\right)  $ is a $\Gamma
$-sampling space, and $W_{\phi}\left(  \phi\right)  $ is the associated
sinc-type function for $\mathbf{K.}$
\end{proposition}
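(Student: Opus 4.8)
The plan is to show that $W_\phi$ transports the frame $\pi(\Gamma)\phi$ in $\mathbf{H}_\pi$ to the reproducing kernel structure on $\mathbf{K}$, and that the Parseval property is exactly what forces the restriction map to $\Gamma$ to be an isometry with the stated sampling expansion. First I would recall that since $\phi$ is admissible, $W_\phi:\mathbf{H}_\pi\to L^2(G)$ is an isometry onto its image $\mathbf{K}$, and that $\mathbf{K}$ is left-invariant because $W_\phi(\pi(x)\psi) = L(x)(W_\phi\psi)$; this is the standard intertwining identity $W_\phi(\pi(x)\psi)(y) = \langle \pi(x)\psi, \pi(y)\phi\rangle = \langle \psi, \pi(x^{-1}y)\phi\rangle = (W_\phi\psi)(x^{-1}y)$. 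In particular $\mathbf{K}$ consists of continuous functions (the matrix coefficients are continuous).

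Next I would compute, for $\psi\in\mathbf{H}_\pi$ and $\gamma\in\Gamma$, the pointwise values $(W_\phi\psi)(\gamma) = \langle \psi, \pi(\gamma)\phi\rangle$. Since $\pi(\Gamma)\phi$ is a Parseval frame for $\mathbf{H}_\pi$, we have
\[
\|W_\phi\psi\|_{\ell^2(\Gamma)}^2 = \sum_{\gamma\in\Gamma} |(W_\phi\psi)(\gamma)|^2 = \sum_{\gamma\in\Gamma} |\langle \psi, \pi(\gamma)\phi\rangle|^2 = \|\psi\|_{\mathbf{H}_\pi}^2 = \|W_\phi\psi\|_{L^2(G)}^2,
\]
which shows the restriction map $R_\Gamma:\mathbf{K}\to\ell^2(\Gamma)$ is an isometry. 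For the reconstruction formula, I would use the Parseval frame reconstruction $\psi = \sum_{\gamma\in\Gamma}\langle\psi,\pi(\gamma)\phi\rangle\,\pi(\gamma)\phi$ in $\mathbf{H}_\pi$ and apply the (bounded, hence continuous) operator $W_\phi$ to both sides, using the intertwining relation $W_\phi(\pi(\gamma)\phi)(x) = (W_\phi\phi)(\gamma^{-1}x) = S(\gamma^{-1}x)$ where $S = W_\phi(\phi)$. This yields, with convergence in the norm of $\mathbf{K}$,
\[
(W_\phi\psi)(x) = \sum_{\gamma\in\Gamma} \langle\psi,\pi(\gamma)\phi\rangle\, S(\gamma^{-1}x) = \sum_{\gamma\in\Gamma} (W_\phi\psi)(\gamma)\, S(\gamma^{-1}x),
\]
which is exactly the sampling expansion with sinc-type function $S = W_\phi(\phi)\in\mathbf{K}$. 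Thus $\mathbf{K}$ is a $\Gamma$-sampling space.

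The main obstacle — really the only subtlety — is justifying the interchange of $W_\phi$ with the infinite sum: one must know the frame reconstruction series converges in $\mathbf{H}_\pi$ (which holds for any Parseval frame) and that $W_\phi$ is bounded (which is the isometry property), so continuity of $W_\phi$ lets the sum pass through. I would also note in passing that the value of $S$ at a point is automatically controlled since matrix coefficients of a unitary representation against a fixed vector are bounded, so the pointwise series makes sense, but the convergence asserted in the definition of a sampling space is norm convergence in $\mathbf{K}$, which is what the argument above delivers directly. No claim about surjectivity of $R_\Gamma$ is needed here, since the proposition does not assert the interpolation property.
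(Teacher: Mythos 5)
Your argument is correct and complete. The paper itself offers no proof of this proposition, deferring entirely to Proposition $2.54$ of F\"uhr's monograph; your write-up is precisely the standard argument behind that result: the intertwining identity $W_{\phi}\circ\pi\left(  x\right)  =L\left(  x\right)  \circ W_{\phi}$ gives left-invariance and converts the Parseval frame expansion $\psi=\sum_{\gamma\in\Gamma}\left\langle \psi,\pi\left(  \gamma\right)  \phi\right\rangle \pi\left(  \gamma\right)  \phi$ into the sampling expansion, while the isometry of $W_{\phi}$ plus the Parseval identity yields the isometry of $R_{\Gamma}$. The only points worth making explicit are that $\mathbf{K}$ is closed because it is the isometric image of a Hilbert space, and that $\Gamma$ is countable (being discrete in a second countable group), so the frame sums are well-defined; both are immediate.
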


See Proposition $2.54$ in \cite{Fuhr cont}.

\subsection{Proof of Theorem \ref{main}}

Since $\mathbf{C}\subset\mathfrak{z}^{\ast}=%
\mathbb{R}
^{n-2d}$ is a bounded set such that the system
\[
\left\{  e^{2\pi i\left\langle k,\lambda\right\rangle }\chi_{\mathbf{C}%
}\left(  \lambda\right)  :k\in%
\mathbb{Z}
^{n-2d}\right\}
\]
is a Parseval frame for $L^{2}\left(  \mathbf{C,}d\lambda\right)  $ and%
\[
\left\{  e^{2\pi i\left\langle k,\lambda\right\rangle }\chi_{\mathbf{E\cap C}%
}\left(  \lambda\right)  :k\in%
\mathbb{Z}
^{n-2d}\right\}
\]
is a Parseval frame for the Hilbert space $L^{2}\left(  \mathbf{E\cap
C},d\lambda\right)  ,$ then according to Lemma \ref{suff}, there exists a
function $\phi\in\mathbf{H}\left(  \mathbf{e,E\cap C}\right)$ such that
$L\left(  \Gamma\right)  \phi$ is a Parseval frame in $\mathbf{H}\left(
\mathbf{e,E\cap C}\right)  .$ In fact, for $\lambda\in\mathbf{E\cap C},$
$T_{\lambda}=\mathbf{u}_{\lambda}\otimes\mathbf{e}_{\lambda},$ we define
$\phi$ such that
\begin{equation}
\mathcal{P}\phi\left(  \lambda\right)  =\frac{T_{\lambda}}{\sqrt{\left\vert
\det B\left(  \lambda\right)  \right\vert }},
\end{equation}
and the system $\mathcal{G}\left(  \mathbf{u}_{\lambda},%
\mathbb{Z}
^{d}\times B\left(  \lambda\right)
\mathbb{Z}
^{d}\right)  $ is a Gabor Parseval frame in $L^{2}\left(
\mathbb{R}
^{d}\right)  $. Next,
\begin{align*}
\left\Vert \mathcal{P}\left(  \phi\right)  \left(  \lambda\right)  \right\Vert
_{\mathcal{HS}}^{2}  &  =\left\Vert \frac{\mathbf{u}_{\lambda}\otimes
\mathbf{e}_{\lambda}}{\sqrt{\left\vert \det B\left(  \lambda\right)
\right\vert }}\right\Vert _{\mathcal{HS}}^{2}\\
&  =\left\vert \det B\left(  \lambda\right)  \right\vert ^{-1}\left\Vert
\mathbf{u}_{\lambda}\right\Vert _{L^{2}\left(
\mathbb{R}
^{d}\right)  }^{2}\\
&  =1.
\end{align*}
Since $N$ is unimodular, and since $\mu\left(  \mathbf{E\cap C}\right)
<\infty$ then from \cite{Fuhr cont}, Page $127,$ $\left(  L,\mathbf{H}\left(
\mathbf{e,E\cap C}\right)  \right)  $ is an admissible representation of $N.$
Moreover, $\phi$ is an admissible vector for the representation $\left(
L,\mathbf{H}\left(  \mathbf{e,E\cap C}\right)  \right)  .$ Now, appealing to
Proposition \ref{sinc1}, then $\mathbf{K}=W_{\phi}\left(  \mathbf{H}\left(
\mathbf{e,E\cap C}\right)  \right)  $ is a sampling space, and $W_{\phi
}\left(  \phi\right)  $ is the associated sinc-type function for $\mathbf{K.}$
To see that in general, $W_{\phi}\left(  \mathbf{H}\left(  \mathbf{e,E\cap
C}\right)  \right)  $ does not have the interpolation property, it suffices to
observe that the condition
\[
\left\Vert \phi\right\Vert _{\mathbf{H}\left(  \mathbf{e,E\cap C}\right)
}^{2}=\mu\left(  \mathbf{E\cap C}\right)  =1
\]
does not always hold. This completes the proof of Theorem \ref{main}.

\subsection{Proof of Theorem \ref{steps}}

In order to prove Theorem \ref{steps}, we will need a series of lemmas first.
Let us assume throughout this subsection that there exist a basis for the Lie
algebra $\mathfrak{n}$ and a compact subset $\mathbf{R}$ of $\mathfrak{z}%
^{\ast}$ such that $\mu\left(  \mathbf{E}\cap\mathbf{R}\right)  =1.$ Put
$\mathbf{E}^{\circ}=\mathbf{E\cap R}$ and
\[
\mathbf{H}\left(  \mathbf{e,E}^{\circ}\right)  =\mathcal{P}^{-1}\left(
\int_{\mathbf{E}^{\circ}}^{\oplus}L^{2}\left(
\mathbb{R}
^{d}\right)  \otimes\mathbf{e}_{\lambda}\left\vert \det B\left(
\lambda\right)  \right\vert d\lambda\right)  .
\]

\begin{lemma}
Let $\mathbf{R}$ be given such that
\[
\int_{\mathbf{E}^{\circ}}\left\vert \det B\left(  \lambda\right)  \right\vert
d\lambda=1.
\]
Then the set $\mathbf{E}^{\circ}$ cannot be contained in a fundamental domain
of the lattice $%
\mathbb{Z}
^{n-2d}.$
\end{lemma}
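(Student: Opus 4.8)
The plan is to argue by contradiction using the homogeneity of $\det B(\lambda)$ together with a scaling argument. Suppose $\mathbf{E}^{\circ}=\mathbf{E}\cap\mathbf{R}$ were contained in some fundamental domain $\Lambda$ of $\mathbb{Z}^{n-2d}$. First I would recall that $\det B(\lambda)$ is a non-vanishing \emph{homogeneous} polynomial in $\lambda_{1},\dots,\lambda_{n-2d}$, say of degree $d$ (each row of $B(\lambda)$ is linear in the $\lambda_{k}$, so the determinant is homogeneous of degree $d$). Consequently, for any scalar $r>0$,
\[
\left\vert \det B(r\lambda)\right\vert = r^{d}\left\vert \det B(\lambda)\right\vert .
\]
This is the structural fact that makes $\mathbf{E}$ unbounded: if $\lambda\in\mathbf{E}$, i.e.\ $0<|\det B(\lambda)|\le 1$, then $r\lambda\in\mathbf{E}$ for all $0<r\le 1$, and in fact for all $r$ with $r^{d}|\det B(\lambda)|\le1$. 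So $\mathbf{E}$ contains, for every direction in which $\det B$ does not vanish, an entire ray segment emanating from the origin whose length tends to infinity as $|\det B(\lambda)|\to 0$.

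Next I would make the measure computation explicit. Using the change of variables $\lambda\mapsto r\lambda$ on $\mathbb{R}^{n-2d}$, which has Jacobian $r^{n-2d}$, one gets for the dilate $r\mathbf{E}^{\circ}$ (for $r\le 1$, so that $r\mathbf{E}^{\circ}\subseteq\mathbf{E}$):
\[
\mu\bigl(r\mathbf{E}^{\circ}\bigr)=\int_{r\mathbf{E}^{\circ}}\left\vert \det B(\lambda)\right\vert d\lambda
=\int_{\mathbf{E}^{\circ}}\left\vert \det B(r\mu)\right\vert r^{n-2d}\,d\mu
= r^{\,d+n-2d}\int_{\mathbf{E}^{\circ}}\left\vert \det B(\mu)\right\vert d\mu = r^{\,n-d}.
\]
In particular, choosing $r$ slightly larger than $1$ is \emph{not} allowed (it might leave $\mathbf{E}$), but I can instead run the argument the other way: the set $\mathbf{E}$ contains $\bigcup_{0<r\le 1} r\mathbf{E}^{\circ}$, and more usefully it contains large dilates of the piece of $\mathbf{E}^{\circ}$ on which $|\det B|$ is small. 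The cleaner route: since $\det B$ is a nonzero homogeneous polynomial, the set where $|\det B(\lambda)|\le 1$ has infinite Lebesgue measure (it contains a full neighborhood of the zero set of the polynomial scaled out to infinity). But a fundamental domain $\Lambda$ of $\mathbb{Z}^{n-2d}$ has Lebesgue measure exactly $1$. If $\mathbf{E}^{\circ}\subseteq\Lambda$, then $\mathbf{E}^{\circ}$ has Lebesgue measure at most $1$. Combined with the fact that on $\mathbf{E}^{\circ}\subseteq\mathbf{E}$ we have $|\det B(\lambda)|\le 1$, we get the \emph{reverse} inequality
\[
1=\int_{\mathbf{E}^{\circ}}\left\vert \det B(\lambda)\right\vert d\lambda
\le \int_{\mathbf{E}^{\circ}} 1\, d\lambda = \mathrm{Leb}(\mathbf{E}^{\circ})\le \mathrm{Leb}(\Lambda)=1,
\]
which forces $\mathrm{Leb}(\mathbf{E}^{\circ})=1$ and $|\det B(\lambda)|=1$ for a.e.\ $\lambda\in\mathbf{E}^{\circ}$, and moreover $\mathbf{E}^{\circ}$ equals $\Lambda$ up to a null set. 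So the real content is to derive a contradiction from the statement ``$|\det B(\lambda)|=1$ for a.e.\ $\lambda$ in a set of full measure in a fundamental domain.'' This cannot happen: $|\det B|^{2}-1$ would then be a polynomial vanishing on a set of positive measure, hence identically zero, so $\det B$ would be a constant of modulus one — contradicting that it is a nonzero \emph{homogeneous} polynomial of degree $d\ge 1$ in $n-2d\ge 1$ variables (a homogeneous polynomial of positive degree is never constant, and one of degree $0$ cannot have the stated linear-entry form when $d\ge1$). That contradiction completes the proof.

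The main obstacle is getting the logical direction right: one must be careful that $\mathbf{E}^{\circ}\subseteq\Lambda$ gives an \emph{upper} bound $\mathrm{Leb}(\mathbf{E}^{\circ})\le 1$, and then combine it with the pointwise bound $|\det B|\le 1$ on $\mathbf{E}$ to squeeze $\mathrm{Leb}(\mathbf{E}^{\circ})=1$ and $|\det B|\equiv 1$ a.e.\ there; the homogeneity is then what kills the possibility that a nonzero polynomial has constant modulus on a positive-measure set. I would present it exactly in that order: (1) recall homogeneity and that $\det B\not\equiv\text{const}$; (2) assume $\mathbf{E}^{\circ}\subseteq\Lambda$ and note $\mathrm{Leb}(\mathbf{E}^{\circ})\le1$; (3) the chain of inequalities above forcing $|\det B|=1$ a.e.\ on $\mathbf{E}^{\circ}$; (4) invoke that a polynomial agreeing with a constant on a positive-measure set is that constant, contradicting positive degree. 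One minor point to handle cleanly is that $\mathbf{E}^{\circ}$ has positive Lebesgue measure, which is needed in step (4) — but this is immediate since $\mu(\mathbf{E}^{\circ})=1>0$ and $\mu$ is absolutely continuous with respect to Lebesgue measure (with density $|\det B|$, which is finite).
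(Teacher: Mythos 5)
Your proposal is correct and is essentially the paper's own argument: both compare $1=\int_{\mathbf{E}^{\circ}}\left\vert \det B\left(  \lambda\right)  \right\vert d\lambda$ with the Lebesgue measure of $\mathbf{E}^{\circ}$ using the pointwise bound $\left\vert \det B\left(  \lambda\right)  \right\vert \leq1$ on $\mathbf{E}$, and then note that containment in a fundamental domain would force $m\left(  \mathbf{E}^{\circ}\right)  \leq1$. The only difference is that the paper asserts the strict inequality $1<m\left(  \mathbf{E}^{\circ}\right)  $ outright from ``non-constant continuous,'' whereas you rule out the equality case explicitly via the fact that a polynomial of positive degree cannot have constant modulus on a set of positive measure --- a slightly more careful rendering of the same step; the scaling/dilation detour at the start of your write-up is unnecessary and should be cut.
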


\begin{proof}
Assume that%
\[
\mu\left(  \mathbf{E}^{\circ}\right)  =\int_{\mathbf{E}^{\circ}}\left\vert
\det B\left(  \lambda\right)  \right\vert d\lambda=1.
\]
We observe that the function $\lambda\mapsto\left\vert \det B\left(
\lambda\right)  \right\vert $ is a non-constant continuous function which is
bounded above by $1$ on $\mathbf{E}^{\circ}\mathbf{.}$ Therefore,
\[
1=\int_{\mathbf{E}^{\circ}}\left\vert \det B\left(  \lambda\right)
\right\vert d\lambda<\int_{\mathbf{E}^{\circ}}d\lambda=m\left(  \mathbf{E}%
^{\circ}\right)
\]
where $m$ is the Lebesgue measure on $\Sigma.$ By contradiction, let us assume
that $\mathbf{E}^{\circ}$ is contained in a fundamental domain of a lattice $%
\mathbb{Z}
^{n-2d}$. Then
\[
1<m\left(  \mathbf{E}^{\circ}\right)  \leq1
\]
and we reach a contradiction.
\end{proof}

\begin{lemma}
\label{j} There exists a finite partition of $\mathbf{E}^{\circ}$:
\[
\mathbf{P}=\left\{  A_{1},A_{2},\cdots,A_{\mathrm{card}\left(  \mathbf{P}%
\right)  }\right\}
\]
such that

\begin{enumerate}
\item $\mathbf{E}^{\circ}=%
{\displaystyle\bigcup\limits_{A_{j}\in\mathbf{P}}}
A_{j}$ and
\[
\mathbf{H}\left(  \mathbf{e,E}^{\circ}\right)  =%
{\displaystyle\bigoplus\limits_{j=1}^{\mathrm{card}\left(  \mathbf{P}\right)
}}
\mathbf{H}\left(  \mathbf{e,}A_{j}\right)  .
\]

\item For each $j$ where $1\leq j\leq\mathrm{card}\left(  \mathbf{P}\right)
,$ $A_{j}$ is contained in a fundamental domain for $%
\mathbb{Z}
^{n-2d}$.

\item For each $j$ where $1\leq j\leq\mathrm{card}\left(  \mathbf{P}\right)
,$ there exists a Parseval frame of the type $L\left(  \Gamma\right)  \phi
_{j}$ for the Hilbert space
\[
\mathbf{H}\left(  \mathbf{e,}A_{j}\right)  =\mathcal{P}^{-1}\left(
\int_{A_{j}}^{\oplus}L^{2}\left(
\mathbb{R}
^{d}\right)  \otimes\mathbf{e}_{\lambda}\left\vert \det B\left(
\lambda\right)  \right\vert d\lambda\right)  .
\]

\end{enumerate}
\end{lemma}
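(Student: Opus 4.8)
The plan is to construct the partition $\mathbf{P}$ directly from the covering of $\mathbf{E}^\circ$ by translates of a fundamental domain. Fix a fundamental domain $\Lambda$ for the lattice $\mathbb{Z}^{n-2d}$ in $\mathfrak{z}^\ast = \mathbb{R}^{n-2d}$, for instance $\Lambda = [0,1)^{n-2d}$. Since $\mathbf{E}^\circ = \mathbf{E}\cap\mathbf{R}$ is bounded (being contained in a compact set $\mathbf{R}$), only finitely many lattice translates of $\Lambda$ meet $\mathbf{E}^\circ$; call the corresponding lattice points $\kappa_1,\dots,\kappa_M \in \mathbb{Z}^{n-2d}$. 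I would then set $A_j = \mathbf{E}^\circ \cap (\Lambda - \kappa_j)$, discarding any $j$ for which this set is Lebesgue-null, and let $\mathbf{P}=\{A_1,\dots,A_{\operatorname{card}(\mathbf{P})}\}$ be the resulting finite collection. Because the translates $\{\Lambda-\kappa : \kappa\in\mathbb{Z}^{n-2d}\}$ partition $\mathbb{R}^{n-2d}$, the sets $A_j$ are pairwise disjoint and their union is $\mathbf{E}^\circ$, giving item (1)'s set equality; and each $A_j$ lies inside the single translate $\Lambda - \kappa_j$, which is itself a fundamental domain for $\mathbb{Z}^{n-2d}$, giving item (2).

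For the Hilbert space decomposition in item (1), I would invoke the definition of $\mathbf{H}(\mathbf{e},\mathbf{A})$ via the Plancherel transform as a direct integral over $\mathbf{A}$ with the measure $|\det B(\lambda)|\,d\lambda$. Since $\mathcal{P}$ is a unitary equivalence and the direct integral over a disjoint union of measurable sets splits as the orthogonal direct sum of the direct integrals over the pieces, we obtain
\[
\mathbf{H}(\mathbf{e},\mathbf{E}^\circ) = \mathcal{P}^{-1}\!\left(\int_{\mathbf{E}^\circ}^{\oplus} L^2(\mathbb{R}^d)\otimes\mathbf{e}_\lambda\,|\det B(\lambda)|\,d\lambda\right) = \bigoplus_{j=1}^{\operatorname{card}(\mathbf{P})}\mathbf{H}(\mathbf{e},A_j),
\]
and left-invariance of each summand follows from the fact that $\mathcal{P}$ intertwines $L$ with $\int^\oplus \pi_\lambda\otimes\mathbf{1}$, which acts fiberwise and hence preserves each $\mathbf{H}(\mathbf{e},A_j)$.

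For item (3), the key observation is that each $A_j$ is contained in a fundamental domain for $\mathbb{Z}^{n-2d}$, so $\{e^{2\pi i\langle k,\lambda\rangle}\chi_{A_j}(\lambda):k\in\mathbb{Z}^{n-2d}\}$ is in fact an orthogonal system — in particular a Parseval frame — for $L^2(A_j,d\lambda)$. Also, since $A_j\subseteq\mathbf{E}^\circ\subseteq\mathbf{E}$, every $\lambda\in A_j$ satisfies $|\det B(\lambda)|\le 1$, so by the Density Condition (Lemma \ref{dense}) and Remark \ref{remark} one may choose $\mathbf{u}_\lambda = |\det B(\lambda)|^{1/2}\chi_{E(\lambda)}$ so that $\mathcal{G}(\mathbf{u}_\lambda,\mathbb{Z}^d\times B(\lambda)\mathbb{Z}^d) = \pi_\lambda(\Gamma_{\mathfrak{b}}\Gamma_{\mathfrak{a}})\mathbf{u}_\lambda$ is a Gabor Parseval frame in $L^2(\mathbb{R}^d)$. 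Defining $\phi_j$ by $\mathcal{P}\phi_j(\lambda) = |\det B(\lambda)|^{-1/2}\mathbf{u}_\lambda\otimes\mathbf{e}_\lambda$ for $\lambda\in A_j$ and $0$ otherwise, the computation is now verbatim that of Lemma \ref{suff}, with $\mathbf{E}\cap\mathbf{C}$ replaced by $A_j$: the sum over $\gamma_1\in\Gamma_{\mathfrak{b}}\Gamma_{\mathfrak{a}}$ of $|\langle\mathbf{w}_\lambda,\pi_\lambda(\gamma_1)\mathbf{u}_\lambda\rangle|^2$ collapses to $\|\mathbf{w}_\lambda\|^2$ by the Gabor Parseval property, and the sum over $\Gamma_{\mathfrak{z}}$ collapses by Plancherel on $\ell^2(\mathbb{Z}^{n-2d})$ using that the exponentials form a Parseval frame on $L^2(A_j)$. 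Hence $L(\Gamma)\phi_j$ is a Parseval frame for $\mathbf{H}(\mathbf{e},A_j)$.

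The only genuinely delicate point is the finiteness of $\mathbf{P}$ and the need that $\mathbf{E}^\circ$ be bounded; this is exactly why the hypothesis requires $\mathbf{R}$ compact. Everything else is bookkeeping: the disjointification of the lattice tiling, the fiberwise splitting of the direct integral, and the reuse of the Lemma \ref{suff} computation on each piece. I would expect the write-up to be short, with the bulk of the argument being the citation of Lemma \ref{suff}'s computation applied to $A_j$ in place of $\mathbf{E}\cap\mathbf{C}$.
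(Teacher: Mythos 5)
Your proposal is correct and follows essentially the same route as the paper's (much terser) proof: chop the bounded set $\mathbf{E}^{\circ}$ into the finitely many nonnull pieces $A_{j}=\mathbf{E}^{\circ}\cap\left(  \Lambda-\kappa_{j}\right)  $, observe that the direct integral splits over the disjoint union, and rerun the computation of Lemma \ref{suff} on each $A_{j}$ using that the restricted exponentials form a Parseval frame for $L^{2}\left(  A_{j},d\lambda\right)  $. One small correction: when $m\left(  A_{j}\right)  <1$ the functions $e^{2\pi i\left\langle k,\lambda\right\rangle }\chi_{A_{j}}$ are generally \emph{not} an orthogonal system (their pairwise inner products equal $\widehat{\chi_{A_{j}}}\left(  k^{\prime}-k\right)  $, which need not vanish); they form a Parseval frame because they are the orthogonal projections onto $L^{2}\left(  A_{j}\right)  $ of the orthonormal basis $\left\{  e^{2\pi i\left\langle k,\lambda\right\rangle }\right\}  _{k}$ of $L^{2}\left(  \Lambda\right)  $, which is all your argument actually needs.
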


\begin{proof}
Parts $1$, $2$ are obviously true. For the proof for Part $3,$ we observe that
if $A_{j}$ is contained in a fundamental domain of $%
\mathbb{Z}
^{n-2d}$ then
\[
\left\{  e^{2\pi i\left\langle k,\lambda\right\rangle }\chi_{A_{j}}\left(
\lambda\right)  :k\in%
\mathbb{Z}
^{n-2d}\right\}
\]
is a Parseval frame for the Hilbert space $L^{2}\left(  A_{j}\mathbf{,}%
d\lambda\right)  .$ Thus Lemma \ref{suff} gives us Part $3$.
\end{proof}

\begin{lemma}
\label{one}For each $1\leq j\leq\mathrm{card}\left(  \mathbf{P}\right)  ,$ we
can construct a Parseval frame of the type $L\left(  \Gamma\right)  \phi_{j}$,
such that
\[
\left\Vert \sum_{j=1}^{\mathrm{card}\left(  \mathbf{P}\right)  }\phi
_{j}\right\Vert _{\mathbf{H}\left(  \mathbf{e,E}^{\circ}\right)  }^{2}=1.
\]

\begin{proof}
The construction of a Parseval frame for each $\mathbf{H}\left(
\mathbf{e,}A_{j}\right)  ,$ $1\leq j\leq\mathrm{card}\left(  \mathbf{P}%
\right)  $ of the type $L\left(  \Gamma\right)  \phi_{j}$ is given in Lemma
\ref{suff}, and%
\begin{align*}
\left\Vert \sum_{j=1}^{\mathrm{card}\left(  \mathbf{P}\right)  }\phi
_{j}\right\Vert _{\mathbf{H}\left(  \mathbf{e,E}^{\circ}\right)  }^{2}  &
=\sum_{j=1}^{\mathrm{card}\left(  \mathbf{P}\right)  }\left\Vert \phi
_{j}\right\Vert _{\mathbf{H}\left(  \mathbf{e,}A_{j}\right)  }^{2}\\
&  =\int_{\cup_{j=1}^{\mathrm{card}\left(  \mathbf{P}\right)  }A_{j}%
}\left\vert \det B\left(  \lambda\right)  \right\vert d\lambda\\
&  =\int_{\mathbf{E}^{\circ}}\left\vert \det B\left(  \lambda\right)
\right\vert d\lambda\\
&  =1.
\end{align*}

\end{proof}
\end{lemma}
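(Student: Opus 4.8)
The plan is to produce each $\phi_j$ by running the construction from Lemma \ref{suff} on the single block $A_j$ of the partition $\mathbf{P}$ of $\mathbf{E}^{\circ}$, and then to pass from the norm of $\sum_j\phi_j$ to a sum of fibrewise-trivial norms via the orthogonal decomposition $\mathbf{H}\left(\mathbf{e},\mathbf{E}^{\circ}\right)=\bigoplus_{j}\mathbf{H}\left(\mathbf{e},A_j\right)$ recorded in Lemma \ref{j}. First I would invoke Lemma \ref{j}: each $A_j$ is contained in a fundamental domain of $\mathbb{Z}^{n-2d}$, so $\{e^{2\pi i\left\langle k,\lambda\right\rangle}\chi_{A_j}(\lambda):k\in\mathbb{Z}^{n-2d}\}$ is a Parseval frame for $L^2(A_j,d\lambda)$, and therefore the argument of Lemma \ref{suff} (read with $A_j$ in the role of $\mathbf{E}\cap\mathbf{C}$) yields $\phi_j\in\mathbf{H}\left(\mathbf{e},A_j\right)$ with $L(\Gamma)\phi_j$ a Parseval frame for $\mathbf{H}\left(\mathbf{e},A_j\right)$. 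Concretely this $\phi_j$ satisfies $\mathcal{P}\phi_j(\lambda)=\left\vert\det B(\lambda)\right\vert^{-1/2}\mathbf{u}_\lambda\otimes\mathbf{e}_\lambda$ on $A_j$, with $\mathcal{G}(\mathbf{u}_\lambda,\mathbb{Z}^d\times B(\lambda)\mathbb{Z}^d)$ a Gabor Parseval frame, so that $\left\Vert\mathbf{u}_\lambda\right\Vert_{L^2(\mathbb{R}^d)}^2=\left\vert\det B(\lambda)\right\vert$ by Lemma \ref{ONB copy(1)}.

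Next, exactly as in the computation of Lemma \ref{const}, I would observe that $\left\Vert\mathcal{P}\phi_j(\lambda)\right\Vert_{\mathcal{HS}}^2=\left\vert\det B(\lambda)\right\vert^{-1}\left\Vert\mathbf{u}_\lambda\right\Vert^2\left\Vert\mathbf{e}_\lambda\right\Vert^2=1$, whence $\left\Vert\phi_j\right\Vert_{\mathbf{H}\left(\mathbf{e},A_j\right)}^2=\int_{A_j}\left\vert\det B(\lambda)\right\vert\,d\lambda=\mu(A_j)$. Since $\mathbf{P}$ is a partition of $\mathbf{E}^{\circ}$, the subspaces $\mathbf{H}\left(\mathbf{e},A_j\right)$ are pairwise orthogonal inside $\mathbf{H}\left(\mathbf{e},\mathbf{E}^{\circ}\right)$ by Part $1$ of Lemma \ref{j}, so the Pythagorean identity together with additivity of $\mu$ gives
\[
\left\Vert\sum_{j=1}^{\mathrm{card}\left(\mathbf{P}\right)}\phi_j\right\Vert_{\mathbf{H}\left(\mathbf{e},\mathbf{E}^{\circ}\right)}^2=\sum_{j=1}^{\mathrm{card}\left(\mathbf{P}\right)}\left\Vert\phi_j\right\Vert^2=\sum_{j=1}^{\mathrm{card}\left(\mathbf{P}\right)}\mu(A_j)=\mu(\mathbf{E}^{\circ})=\int_{\mathbf{E}^{\circ}}\left\vert\det B(\lambda)\right\vert\,d\lambda,
\]
and this last integral equals $1$ by the standing hypothesis $\mu(\mathbf{E}\cap\mathbf{R})=1$ in force throughout this subsection.

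I do not expect any genuine obstacle: the decomposition supplied by Lemma \ref{j} and the frame construction of Lemma \ref{suff} do all the work, and the proof is essentially bookkeeping. The two points that deserve a sentence of justification are that the $\phi_j$ really lie in mutually orthogonal summands — which is immediate from the direct-integral description of $\mathbf{H}\left(\mathbf{e},\cdot\right)$ over the pairwise disjoint sets $A_j$ — and that the vector delivered by Lemma \ref{suff} has norm exactly $\mu(A_j)$, which is forced by the fact that its Plancherel fibre $\mathcal{P}\phi_j(\lambda)$ is a unit Hilbert--Schmidt vector at (almost) every $\lambda$.
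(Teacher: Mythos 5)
Your proposal is correct and follows the same route as the paper: build each $\phi_{j}$ by applying Lemma \ref{suff} to the block $A_{j}$, observe that $\left\Vert \phi_{j}\right\Vert ^{2}=\int_{A_{j}}\left\vert \det B\left(  \lambda\right)  \right\vert d\lambda$, and add over the orthogonal summands to get $\mu\left(  \mathbf{E}^{\circ}\right)  =1$. The only difference is that you spell out the fibrewise computation $\left\Vert \mathcal{P}\phi_{j}\left(  \lambda\right)  \right\Vert _{\mathcal{HS}}=1$ and the Pythagorean step, which the paper leaves implicit.
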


\begin{lemma}
Let $\phi=\sum_{j=1}^{\mathrm{card}\left(  \mathbf{P}\right)  }\phi_{j}$ such
that for each $1\leq j\leq\mathrm{card}\left(  \mathbf{P}\right)  ,$ $L\left(
\Gamma\right)  \phi_{j}$ is a Parseval frame for $\mathbf{H}\left(
\mathbf{e,}A_{j}\right)  $ and $\left\Vert \phi\right\Vert _{\mathbf{H}\left(
\mathbf{e,E}^{\circ}\right)  }^{2}=1.$ If $L\left(  \Gamma\right)  \left(
\phi\right)  $ is a Parseval frame then $L\left(  \Gamma\right)  \phi$ is an
orthonormal basis for $\mathbf{H}\left(  \mathbf{e,E}^{\circ}\right)  .$
\end{lemma}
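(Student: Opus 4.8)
The plan is to reduce the statement to the elementary fact recorded just after the definition of a Parseval frame in Section 4: a Parseval frame whose every element is a unit vector is automatically an orthonormal basis. Since $L(\Gamma)\phi$ is assumed to be a Parseval frame for $\mathbf{H}\left(\mathbf{e,E}^{\circ}\right)$, the only thing left to check is that each vector $L(\gamma)\phi$, $\gamma\in\Gamma$, has norm one.

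First I would note that, although $\Gamma$ is not a subgroup of $N$, each $\gamma\in\Gamma$ is an element of $N$, so $L(\gamma)$ is a unitary operator on $L^{2}(N)$. By Proposition \ref{analysis}, $\mathbf{H}\left(\mathbf{e,E}^{\circ}\right)$ is a left-invariant closed subspace of $L^{2}(N)$, so $L(\gamma)$ restricts to a unitary operator of $\mathbf{H}\left(\mathbf{e,E}^{\circ}\right)$ onto itself. Hence $\left\Vert L(\gamma)\phi\right\Vert _{\mathbf{H}\left(\mathbf{e,E}^{\circ}\right)}=\left\Vert \phi\right\Vert _{\mathbf{H}\left(\mathbf{e,E}^{\circ}\right)}=1$ for every $\gamma\in\Gamma$, the last equality being the hypothesis (which in turn comes from Lemma \ref{one}).

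Having verified that $L(\Gamma)\phi$ is a Parseval frame of unit vectors, I would conclude as follows. Fix $\gamma_{0}\in\Gamma$; applying the Parseval identity to $f=L(\gamma_{0})\phi$ gives $1=\left\Vert L(\gamma_{0})\phi\right\Vert ^{2}=\sum_{\gamma\in\Gamma}\left\vert \left\langle L(\gamma_{0})\phi,L(\gamma)\phi\right\rangle \right\vert ^{2}=1+\sum_{\gamma\neq\gamma_{0}}\left\vert \left\langle L(\gamma_{0})\phi,L(\gamma)\phi\right\rangle \right\vert ^{2}$, so $\left\langle L(\gamma_{0})\phi,L(\gamma)\phi\right\rangle =0$ whenever $\gamma\neq\gamma_{0}$. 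Thus $L(\Gamma)\phi$ is an orthonormal system, and being a frame it is complete, hence it is an orthonormal basis for $\mathbf{H}\left(\mathbf{e,E}^{\circ}\right)$; alternatively one simply cites the fact from Section 4.

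There is essentially no obstacle here: the entire content is the unitarity of $L(\gamma)$ on the invariant subspace together with the normalization $\left\Vert \phi\right\Vert =1$ supplied by Lemma \ref{one}. The one point worth stating explicitly is that $\Gamma$ need not be a subgroup, so $L(\Gamma)\phi$ is not literally an orbit of a unitary group action; nevertheless each individual $L(\gamma)$ is unitary, which is all that the argument uses.
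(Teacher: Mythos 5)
Your proposal is correct and follows essentially the same route as the paper: both arguments reduce to the fact, recorded in Section 4, that a Parseval frame consisting of unit vectors is an orthonormal basis, with the unit-norm condition supplied by the unitarity of each $L(\gamma)$ together with $\left\Vert \phi\right\Vert _{\mathbf{H}\left(\mathbf{e,E}^{\circ}\right)}=1$. The only difference is that you spell out the unitarity of $L(\gamma)$ on the invariant subspace and re-derive the unit-norm Parseval frame fact, which the paper simply cites.
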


\begin{proof}
If $L\left(  \Gamma\right)  \phi$ is a Parseval frame for $\mathbf{H}\left(
\mathbf{e,E}^{\circ}\right)  $ then $L\left(  \Gamma\right)  \phi$ must be an
orthonormal basis since $\left\Vert \phi\right\Vert _{\mathbf{H}\left(
\mathbf{e,E\cap C}\right)  }^{2}=1.$
\end{proof}

We would like to remark that in general the direct sum of Parseval frames is
not a Parseval frame. Next, let us fix a fundamental domain $\Lambda$ of $%
\mathbb{Z}
^{n-2d}$ such that
\[
\mathbf{E}^{\circ}=%
{\displaystyle\bigcup\limits_{\kappa_{j}\in S}}
\left(  \left(  \Lambda-\kappa_{j}\right)  \cap\mathbf{E}^{\circ}\right)  ,
\]
each $A_{j}=\mathbf{E}^{\circ}\cap\left(  \Lambda-\kappa_{j}\right)  $ is a
set of positive Lebesgue measure for all $\kappa_{j}\in S$ and $S$ is a finite
subset of $%
\mathbb{Z}
^{n-2d}.$ Clearly, the collection of sets
\[
\mathbf{P}=\left\{  A_{j}:1\leq j\leq\mathrm{card}\left(  S\right)  \right\}
\]
provides us with a partition of $\mathbf{E}^{\circ}$ as described in Lemma
\ref{j}.

\begin{lemma}
\label{phi}For each $1\leq j\leq\mathrm{card}\left(  \mathbf{P}\right)  ,$
there exists $\phi_{j}\in\mathbf{H}\left(  \mathbf{e,}A_{j}\right)  $ such
that the following holds.

\begin{enumerate}
\item $L\left(  \Gamma\right)  \phi_{j}$ is a Parseval frame for
$\mathbf{H}\left(  \mathbf{e,}A_{j}\right)  ,$

\item $\mathcal{P}\phi_{j}\left(  \lambda\right)  =\left(  \mathbf{u}%
_{\lambda}\otimes\mathbf{e}_{\lambda}\right)  \left\vert \det B\left(
\lambda\right)  \right\vert ^{-1/2}$ where 
\begin{equation}
\mathbf{u}_{\lambda}=\left\{
\begin{array}
[c]{c}%
\left\vert \det B\left(  \lambda\right)  \right\vert ^{1/2}\chi_{E\left(
\lambda\right)  }\text{ if }\lambda\in A_j\\
0\text{ if }\lambda\notin\Sigma-A_j%
\end{array}
\right.
\end{equation}

such that $E\left(  \lambda\right)  $ tiles $%
\mathbb{R}
^{d}$ by $%
\mathbb{Z}
^{d}$ and packs $%
\mathbb{R}
^{d}$ by $B\left(  \lambda\right)  ^{-tr}%
\mathbb{Z}
^{d}.$
\end{enumerate}
\end{lemma}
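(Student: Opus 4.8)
The plan is to obtain this lemma by specializing Lemma \ref{suff} to the single piece $A_j$ and feeding in the explicit Gabor windows produced by Remark \ref{remark}. The first observation is that since $\Lambda$ is a fundamental domain for the lattice $\mathbb{Z}^{n-2d}$, every translate $\Lambda-\kappa_j$ is again a fundamental domain, so $A_j=\mathbf{E}^{\circ}\cap(\Lambda-\kappa_j)$ is contained in a fundamental domain of $\mathbb{Z}^{n-2d}$. Hence the system $\left\{ e^{2\pi i\left\langle k,\lambda\right\rangle }\chi_{A_j}(\lambda):k\in\mathbb{Z}^{n-2d}\right\}$ is a Parseval frame (in fact an orthonormal system) for $L^{2}(A_j,d\lambda)$. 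This is precisely the structural property of the set $\mathbf{E}\cap\mathbf{C}$ that is used in the proof of Lemma \ref{suff}, so $A_j$ can play the role of $\mathbf{E}\cap\mathbf{C}$ throughout that argument.

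Next, for each $\lambda\in A_j\subseteq\mathbf{E}^{\circ}\subseteq\mathbf{E}$ we have $\det B(\lambda)\neq0$ and $\left\vert \det B(\lambda)\right\vert \leq1$, so Remark \ref{remark} (that is, Theorem $3.3$ in \cite{Pfander}) applies and produces a set $E(\lambda)$ which tiles $\mathbb{R}^{d}$ by $\mathbb{Z}^{d}$ and packs $\mathbb{R}^{d}$ by $B(\lambda)^{-tr}\mathbb{Z}^{d}$, such that $\mathbf{u}_{\lambda}=\left\vert \det B(\lambda)\right\vert ^{1/2}\chi_{E(\lambda)}$ generates a Gabor Parseval frame $\mathcal{G}(\mathbf{u}_{\lambda},\mathbb{Z}^{d}\times B(\lambda)\mathbb{Z}^{d})$ in $L^{2}(\mathbb{R}^{d})$. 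Declaring $\mathbf{u}_{\lambda}=0$ off $A_j$ and defining $\phi_j$ by $\mathcal{P}\phi_j(\lambda)=\left(\mathbf{u}_{\lambda}\otimes\mathbf{e}_{\lambda}\right)\left\vert \det B(\lambda)\right\vert ^{-1/2}$ on $A_j$ and $0$ elsewhere gives item $2$ by construction. That $\phi_j$ genuinely lies in $\mathbf{H}(\mathbf{e},A_j)$ follows from $\left\Vert \mathbf{u}_{\lambda}\right\Vert _{L^{2}(\mathbb{R}^{d})}^{2}=\left\vert \det B(\lambda)\right\vert$ (Lemma \ref{ONB copy(1)}), which yields $\left\Vert \phi_j\right\Vert _{\mathbf{H}(\mathbf{e},A_j)}^{2}=\int_{A_j}\left\vert \det B(\lambda)\right\vert d\lambda=\mu(A_j)<\infty$.

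It remains to verify item $1$, that $L(\Gamma)\phi_j$ is a Parseval frame for $\mathbf{H}(\mathbf{e},A_j)$. For this I would repeat the chain of equalities in the proof of Lemma \ref{suff} verbatim, with $\mathbf{E}\cap\mathbf{C}$ replaced by $A_j$: expand $\sum_{\gamma\in\Gamma}\left\vert \left\langle \psi,L(\gamma)\phi_j\right\rangle \right\vert ^{2}$ via Proposition \ref{analysis}, split $\gamma=\gamma_{\mathfrak{z}}\gamma_{1}$ with $\gamma_{\mathfrak{z}}\in\Gamma_{\mathfrak{z}}$ and $\gamma_{1}\in\Gamma_{\mathfrak{b}}\Gamma_{\mathfrak{a}}$, use the Parseval frame property of the exponentials on $L^{2}(A_j,d\lambda)$ to carry out the sum over $\Gamma_{\mathfrak{z}}$ (here is where the first observation enters), then use that $\pi_{\lambda}(\Gamma_{\mathfrak{b}}\Gamma_{\mathfrak{a}})\mathbf{u}_{\lambda}$ is a Gabor Parseval frame for a.e.\ fixed $\lambda\in A_j$ to carry out the sum over $\Gamma_{\mathfrak{b}}\Gamma_{\mathfrak{a}}$, and finally collapse the remaining integral to $\int_{A_j}\left\Vert \mathcal{P}\psi(\lambda)\right\Vert _{\mathcal{HS}}^{2}\left\vert \det B(\lambda)\right\vert d\lambda=\left\Vert \psi\right\Vert _{\mathbf{H}(\mathbf{e},A_j)}^{2}$.

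There is no real obstacle here; the lemma is essentially a repackaging of Lemma \ref{suff} together with the windowed-indicator construction of Remark \ref{remark}, the single point worth noting being that the selection $\lambda\mapsto E(\lambda)$ can be made measurably in $\lambda$ so that $\phi_j$ is a bona fide element of $L^{2}(N)$; this measurability is part of the content of the cited Theorem $3.3$ in \cite{Pfander}, and since $\lambda\mapsto B(\lambda)$ is polynomial it causes no trouble.
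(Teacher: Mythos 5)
Your proposal is correct and follows essentially the same route as the paper, whose proof of this lemma is simply a citation of Lemma \ref{suff} and Remark \ref{remark}: you specialize the argument of Lemma \ref{suff} to $A_{j}$ (justified, as you note, by $A_{j}\subseteq\Lambda-\kappa_{j}$ being contained in a fundamental domain of $\mathbb{Z}^{n-2d}$, so the exponentials restricted to $A_{j}$ form a Parseval frame for $L^{2}\left(  A_{j},d\lambda\right)  $, exactly as recorded in Lemma \ref{j}), and you invoke Remark \ref{remark} for the explicit tiling-and-packing form of $\mathbf{u}_{\lambda}$. Your added comment on the measurability of $\lambda\mapsto E\left(  \lambda\right)  $ is a reasonable extra precaution that the paper leaves implicit.
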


\begin{proof}
See Lemma \ref{suff} and Remark \ref{remark}.
\end{proof}

Let us now define
\[
\phi=\phi_{1}+\cdots+\phi_{\mathrm{card}\left(  \mathbf{P}\right)  }%
\]
such that each $\phi_{j}$ is as described in Lemma \ref{phi}. Then clearly,
\begin{equation}
\mathcal{P}\phi\left(  \lambda\right)  =\left\{
\begin{array}
[c]{c}%
\chi_{E\left(  \lambda\right)  }\otimes\mathbf{e}_{\lambda}\text{ if }%
\lambda\in\mathbf{E}^{\circ}\\
0\text{ if }\lambda\notin\Sigma-\mathbf{E}^{\circ}%
\end{array}
\right.  .
\end{equation}

\begin{lemma}
\label{ONB} If for each $1\leq j,j^{\prime}\leq\mathrm{card}\left(
\mathbf{P}\right)  ,$ $j\neq j^{\prime},$ and for arbitrary functions
$\mathbf{f},\mathbf{g}\in L^{2}\left(
\mathbb{R}
^{d}\right)  ,\ \kappa_{j},\kappa_{j^{\prime}}\in S$
\[
\left(  \left\langle \mathbf{f},\pi_{\lambda-\kappa_{j}}\left(  \gamma
_{1}\right)  \mathbf{u}_{\lambda-\kappa_{j}}\right\rangle \right)
_{\gamma_{1}\in\Gamma_{\mathfrak{b}}\Gamma_{\mathfrak{a}}}\perp\left(
\left\langle \mathbf{g},\pi_{\lambda-\kappa_{j^{\prime}}}\left(  \gamma
_{1}\right)  \mathbf{u}_{\lambda-\kappa_{j^{\prime}}}\right\rangle \right)
_{\gamma_{1}\in\Gamma_{\mathfrak{b}}\Gamma_{\mathfrak{a}}}\text{ }\
\]
for $\lambda\in\Lambda$ then $L\left(  \Gamma\right)  \left(  \phi\right)  $
is an orthonormal basis for the Hilbert space $\mathbf{H}\left(
\mathbf{e,E}^{\circ}\right)  .$
\end{lemma}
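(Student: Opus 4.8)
\textbf{Proof proposal for Lemma \ref{ONB}.}

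The plan is to reduce the orthonormal-basis claim to two ingredients already in hand: first, each $L(\Gamma)\phi_j$ is a Parseval frame for $\mathbf{H}(\mathbf{e},A_j)$ (Lemma \ref{phi}), and the $\mathbf{H}(\mathbf{e},A_j)$ decompose $\mathbf{H}(\mathbf{e},\mathbf{E}^\circ)$ orthogonally (Lemma \ref{j}); second, $\|\phi\|^2_{\mathbf{H}(\mathbf{e},\mathbf{E}^\circ)}=1$ (Lemma \ref{one}). By the preceding lemma, it therefore suffices to prove that $L(\Gamma)\phi$ is a Parseval frame for $\mathbf{H}(\mathbf{e},\mathbf{E}^\circ)$; the unit-norm condition then upgrades it automatically to an orthonormal basis. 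So the whole argument is about verifying the Parseval identity $\sum_{\gamma\in\Gamma}|\langle\psi,L(\gamma)\phi\rangle|^2=\|\psi\|^2$ for every $\psi\in\mathbf{H}(\mathbf{e},\mathbf{E}^\circ)$, and seeing precisely where the orthogonality hypothesis enters.

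First I would run the same Plancherel-side computation as in Lemma \ref{suff}. Writing $\mathcal{P}\psi(\lambda)=\mathbf{w}_\lambda\otimes\mathbf{e}_\lambda$ on $\mathbf{E}^\circ$ and $\mathcal{P}\phi(\lambda)=\chi_{E(\lambda)}\otimes\mathbf{e}_\lambda$, I expand the central sum over $\Gamma_{\mathfrak z}=\exp(\mathbb{Z}Z_1+\cdots)$ using that $\{e^{2\pi i\langle k,\lambda\rangle}\chi_\Lambda(\lambda):k\in\mathbb{Z}^{n-2d}\}$ is an orthonormal basis of $L^2(\Lambda,d\lambda)$ — here $\Lambda$ is an exact fundamental domain, so we get an honest Plancherel identity on $\mathbb{Z}^{n-2d}$, not merely a frame inequality. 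The novelty relative to Lemma \ref{suff} is that $\mathbf{E}^\circ$ is \emph{not} contained in $\Lambda$; it is the disjoint union of the translated pieces $A_j=\mathbf{E}^\circ\cap(\Lambda-\kappa_j)$, $\kappa_j\in S$. Translating each $A_j$ back into $\Lambda$ by $\lambda\mapsto\lambda+\kappa_j$, the single integral over $\mathbf{E}^\circ$ becomes, for fixed $\gamma_1\in\Gamma_{\mathfrak b}\Gamma_{\mathfrak a}$ and fixed $m\in\mathbb{Z}^{n-2d}$, an integral over $\Lambda$ of $e^{-2\pi i\langle\lambda,m\rangle}$ times $\sum_{j}\mathbf{1}_{A_j}(\lambda-\kappa_j)\,\langle\mathbf{w}_{\lambda-\kappa_j},\pi_{\lambda-\kappa_j}(\gamma_1)\mathbf{u}_{\lambda-\kappa_j}\rangle|\det B(\lambda-\kappa_j)|^{1/2}$ — because the central character $e^{2\pi i\lambda(\sum z_iZ_i)}$ of $\pi_\lambda$ on $\Gamma_{\mathfrak z}$ is quasi-periodic under integer shifts, the $\kappa_j$-shift only relabels the Fourier coefficient index $m$, which is harmless after summing over all $m$. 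Applying Parseval on $\mathbb{Z}^{n-2d}$ and then summing over $\gamma_1\in\Gamma_{\mathfrak b}\Gamma_{\mathfrak a}$, the left-hand side collapses to
\[
\int_\Lambda\sum_{\gamma_1\in\Gamma_{\mathfrak b}\Gamma_{\mathfrak a}}\Bigl|\sum_{\kappa_j\in S}\mathbf{1}_{A_j}(\lambda-\kappa_j)\,\langle\mathbf{w}_{\lambda-\kappa_j},\pi_{\lambda-\kappa_j}(\gamma_1)\mathbf{u}_{\lambda-\kappa_j}\rangle\,|\det B(\lambda-\kappa_j)|^{1/2}\Bigr|^2\,d\lambda.
\]

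Now the orthogonality hypothesis does its job: for fixed $\lambda\in\Lambda$ the inner sum over $\gamma_1$ of a squared modulus of a sum over $j$ expands into diagonal terms plus cross terms, and the hypothesis says exactly that, for $j\neq j'$, the $\ell^2(\Gamma_{\mathfrak b}\Gamma_{\mathfrak a})$-sequences $(\langle\mathbf{w}_{\lambda-\kappa_j},\pi_{\lambda-\kappa_j}(\gamma_1)\mathbf{u}_{\lambda-\kappa_j}\rangle)_{\gamma_1}$ and $(\langle\mathbf{w}_{\lambda-\kappa_{j'}},\pi_{\lambda-\kappa_{j'}}(\gamma_1)\mathbf{u}_{\lambda-\kappa_{j'}}\rangle)_{\gamma_1}$ are orthogonal, so all cross terms vanish. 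Only the diagonal survives, and since $\pi_{\lambda-\kappa_j}(\Gamma_{\mathfrak b}\Gamma_{\mathfrak a})\mathbf{u}_{\lambda-\kappa_j}=\mathcal{G}(\mathbf{u}_{\lambda-\kappa_j},\mathbb{Z}^d\times B(\lambda-\kappa_j)\mathbb{Z}^d)$ is a Gabor Parseval frame in $L^2(\mathbb{R}^d)$ (Lemma \ref{phi}, Remark \ref{remark}), each diagonal $\gamma_1$-sum equals $\|\mathbf{w}_{\lambda-\kappa_j}\|^2_{L^2(\mathbb{R}^d)}|\det B(\lambda-\kappa_j)|$. Undoing the translations, the expression becomes $\int_{\mathbf{E}^\circ}\|\mathbf{w}_\lambda\|^2|\det B(\lambda)|\,d\lambda=\|\psi\|^2_{\mathbf{H}(\mathbf{e},\mathbf{E}^\circ)}$, which is the Parseval identity. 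Combined with $\|\phi\|^2=1$ and the earlier lemma, $L(\Gamma)\phi$ is an orthonormal basis for $\mathbf{H}(\mathbf{e},\mathbf{E}^\circ)$, and in particular $W_\phi$ is then surjective onto $\ell^2(\Gamma)$, giving the interpolation property in Theorem \ref{steps}.

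I expect the main technical obstacle to be bookkeeping the central-character shift cleanly: one must check that replacing $\lambda$ by $\lambda-\kappa_j$ inside $\pi_\lambda$ genuinely reduces, after the full sum over $m\in\mathbb{Z}^{n-2d}$, to the stated expression over $\Lambda$ without stray phase factors surviving — this uses that $\kappa_j\in\mathbb{Z}^{n-2d}$ so $e^{2\pi i\langle\kappa_j,\ell\rangle}=1$ for $\ell\in\mathbb{Z}^{n-2d}$ and that integer translation of the Fourier-coefficient index is a bijection of $\mathbb{Z}^{n-2d}$. The rest is a routine interchange of sums and integrals, justified by the finiteness of $S$, the boundedness of $\mathbf{E}^\circ$, and the Parseval/Gabor-frame estimates already established.
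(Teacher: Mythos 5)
Your proposal is correct and follows essentially the same route as the paper's proof: translating each piece $A_j=\mathbf{E}^{\circ}\cap(\Lambda-\kappa_j)$ back into the fundamental domain $\Lambda$, using Fourier/Plancherel on $\mathbb{Z}^{n-2d}$ over $\Lambda$, invoking the orthogonality hypothesis to make the sum of squares over $j$ agree with the square of the sum (the paper runs this chain of equalities from $\|\psi\|^2$ toward $\sum_{\gamma}|\langle\psi,L(\gamma)\phi\rangle|^2$, you run it in the reverse direction), and then upgrading the unit-norm Parseval frame to an orthonormal basis. No gaps.
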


\begin{proof}
Let $\psi$ be any arbitrary element in
\[
\mathbf{H}\left(  \mathbf{e,E}^{\circ}\right)  =%
{\displaystyle\bigoplus\limits_{j=1}^{\mathrm{card}\left(  \mathbf{P}\right)
}}
\mathbf{H}\left(  \mathbf{e,}A_{j}\right)
\]
such that $\psi=\sum_{j=1}^{\mathrm{card}\left(  \mathbf{P}\right)  }\psi
_{j},$ for $\psi_{j}\in\mathbf{H}\left(  \mathbf{e,}A_{j}\right)  .$ Let
$r\left(  \lambda\right)  =\left\vert \det B\left(  \lambda\right)
\right\vert .$ Then
\[
\left\Vert \psi_{j}\right\Vert _{\mathbf{H}\left(  \mathbf{e,}A_{j}\right)
}^{2}=\int_{A_{j}}\left\Vert \mathcal{P}\psi_{j}\left(  \sigma\right)
\right\Vert _{\mathcal{HS}}^{2}r\left(  \sigma\right)  d\sigma\text{.}%
\]
Next, it is easy to see that%
\[
\left\Vert \psi_{j}\right\Vert _{\mathbf{H}\left(  \mathbf{e,}A_{j}\right)
}^{2}=\int_{\Lambda\text{ }}\left\Vert \mathcal{P}\psi_{j}\left(
\lambda-\kappa_{j}\right)  \right\Vert _{\mathcal{HS}}^{2}r\left(
\lambda-\kappa_{j}\right)  d\lambda.
\]
Let $\mathcal{P}\psi_{j}\left(  \lambda-\kappa_{j}\right)  =\left(
\mathbf{w}_{\lambda-\kappa_{j}}\otimes\mathbf{e}_{\lambda-\kappa_{j}}\right)
\in L^{2}\left(
\mathbb{R}
^{d}\right)  \otimes\mathbf{e}_{\lambda-\kappa_{j}}$ for $\lambda\in\Lambda.$
Then%
\begin{align}
&  \sum_{j=1}^{\mathrm{card}\left(  \mathbf{P}\right)  }\left\Vert \psi
_{j}\right\Vert _{\mathbf{H}\left(  \mathbf{e,}A_{j}\right)  }^{2}\label{c1}\\
&  =\sum_{j=1}^{\mathrm{card}\left(  \mathbf{P}\right)  }\int_{\Lambda\text{
}}\sum_{\gamma_{1}\in\Gamma_{\mathfrak{b}}\Gamma_{\mathfrak{a}}}\left\vert
\left\langle \mathbf{w}_{\lambda-\kappa_{j}},\pi_{\lambda-\kappa_{j}}\left(
\gamma_{1}\right)  \mathbf{u}_{\lambda-\kappa_{j}}\right\rangle \right\vert
^{2}r\left(  \lambda-\kappa_{j}\right)  d\lambda\\
&  =\int_{\Lambda\text{ }}\sum_{\gamma_{1}\in\Gamma_{\mathfrak{b}}%
\Gamma_{\mathfrak{a}}}\sum_{j=1}^{\mathrm{card}\left(  \mathbf{P}\right)
}\left\vert \left\langle \mathbf{w}_{\lambda-\kappa_{j}},\pi_{\lambda
-\kappa_{j}}\left(  \gamma_{1}\right)  \mathbf{u}_{\lambda-\kappa_{j}%
}\right\rangle \right\vert ^{2}r\left(  \lambda-\kappa_{j}\right)  d\lambda.
\label{c2}%
\end{align}
We would like to be able to state that for $\lambda\in\mathbf{E}^{\circ},$
\begin{align}
&  \sum_{\gamma_{1}\in\Gamma_{\mathfrak{b}}\Gamma_{\mathfrak{a}}}\sum
_{j=1}^{\mathrm{card}\left(  \mathbf{P}\right)  }\left\vert \left\langle
\mathbf{w}_{\lambda-\kappa_{j}},\pi_{\lambda-\kappa_{j}}\left(  \gamma
_{1}\right)  \mathbf{u}_{\lambda-\kappa_{j}}\right\rangle \right\vert
^{2}\label{summations}\\
&  =\sum_{\gamma_{1}\in\Gamma_{\mathfrak{b}}\Gamma_{\mathfrak{a}}}\left\vert
\sum_{j=1}^{\mathrm{card}\left(  \mathbf{P}\right)  }\left\langle
\mathbf{w}_{\lambda-\kappa_{j}},\pi_{\lambda-\kappa_{j}}\left(  \gamma
_{1}\right)  \mathbf{u}_{\lambda-\kappa_{j}}\right\rangle \right\vert ^{2}.
\label{summations2}%
\end{align}
Indeed, letting $\left(  b_{\gamma_{1}}\left(  \lambda\right)  \right)
_{\gamma_{1}\in\Gamma_{\mathfrak{b}}\Gamma_{\mathfrak{a}}}\in l^{2}\left(
\Gamma_{\mathfrak{b}}\Gamma_{\mathfrak{a}}\right)  $ such that $\left(
b_{\gamma_{1}}\left(  \lambda\right)  \right)  _{\gamma_{1}\in\Gamma
_{\mathfrak{b}}\Gamma_{\mathfrak{a}}}$ is a sum of $\mathrm{card}\left(
\mathbf{P}\right)  $-many sequences of the type $\left(  b_{\gamma_{1}}%
^{j}\left(  \lambda\right)  \right)  _{\gamma_{1}\in\Gamma_{\mathfrak{b}%
}\Gamma_{\mathfrak{a}}}$ such that%
\begin{align*}
\left(  b_{\gamma_{1}}\left(  \lambda\right)  \right)  _{\gamma_{1}\in
\Gamma_{\mathfrak{b}}\Gamma_{\mathfrak{a}}}  &  =\sum_{j=1}^{\mathrm{card}%
\left(  \mathbf{P}\right)  }\left(  \left\langle \mathbf{w}_{\lambda
-\kappa_{j}},\pi_{\lambda-\kappa_{j}}\left(  \gamma_{1}\right)  \mathbf{u}%
_{\lambda-\kappa_{j}}\right\rangle \right)  _{\gamma_{1}\in\Gamma
_{\mathfrak{b}}\Gamma_{\mathfrak{a}}}\\
&  =\sum_{j=1}^{\mathrm{card}\left(  \mathbf{P}\right)  }\left(  b_{\gamma
_{1}}^{j}\left(  \lambda\right)  \right)  _{\gamma_{1}\in\Gamma_{\mathfrak{b}%
}\Gamma_{\mathfrak{a}}},
\end{align*}
we compute the norm of the sequence $\left(  b_{\gamma_{1}}\left(
\lambda\right)  \right)  _{\gamma_{1}\in\Gamma_{\mathfrak{b}}\Gamma
_{\mathfrak{a}}}$ in two different ways. First,
\begin{align*}
\left\Vert \left(  b_{\gamma_{1}}\left(  \lambda\right)  \right)  _{\gamma
_{1}\in\Gamma_{\mathfrak{b}}\Gamma_{\mathfrak{a}}}\right\Vert ^{2}  &
=\sum_{\gamma_{1}\in\Gamma_{\mathfrak{b}}\Gamma_{\mathfrak{a}}}\left\vert
b_{\gamma_{1}}\left(  \lambda\right)  \right\vert ^{2}\\
&  =\sum_{\gamma_{1}\in\Gamma_{\mathfrak{b}}\Gamma_{\mathfrak{a}}}\left\vert
\sum_{j=1}^{\mathrm{card}\left(  \mathbf{P}\right)  }\left\langle
\mathbf{w}_{\lambda-\kappa_{j}},\pi_{\lambda-\kappa_{j}}\left(  \gamma
_{1}\right)  \mathbf{u}_{\lambda-\kappa_{j}}\right\rangle \right\vert ^{2}.
\end{align*}
Second
\begin{align*}
&  \left\Vert \left(  b_{\gamma_{1}}\left(  \lambda\right)  \right)
_{\gamma_{1}\in\Gamma_{\mathfrak{b}}\Gamma_{\mathfrak{a}}}\right\Vert ^{2}\\
&  =\left\Vert \sum_{j=1}^{\mathrm{card}\left(  \mathbf{P}\right)  }\left(
b_{\gamma_{1}}^{j}\left(  \lambda\right)  \right)  _{\gamma_{1}\in
\Gamma_{\mathfrak{b}}\Gamma_{\mathfrak{a}}}\right\Vert ^{2}\\
&  =\sum_{j=1}^{\mathrm{card}\left(  \mathbf{P}\right)  }\left\Vert \left(
b_{\gamma_{1}}^{j}\left(  \lambda\right)  \right)  _{\gamma_{1}\in
\Gamma_{\mathfrak{b}}\Gamma_{\mathfrak{a}}}\right\Vert ^{2}\\
&  =\sum_{j=1}^{\mathrm{card}\left(  \mathbf{P}\right)  }\left\Vert \left(
\left\langle \mathbf{w}_{\lambda-\kappa_{j}},\pi_{\lambda-\kappa_{j}}\left(
\gamma_{1}\right)  \mathbf{u}_{\lambda-\kappa_{j}}\right\rangle \right)
_{\gamma_{1}\in\Gamma_{\mathfrak{b}}\Gamma_{\mathfrak{a}}}\right\Vert ^{2}\\
&  =\sum_{j=1}^{\mathrm{card}\left(  \mathbf{P}\right)  }\sum_{\gamma_{1}%
\in\Gamma_{\mathfrak{b}}\Gamma_{\mathfrak{a}}}\left\vert \left\langle
\mathbf{w}_{\lambda-\kappa_{j}},\pi_{\lambda-\kappa_{j}}\left(  \gamma
_{1}\right)  \mathbf{u}_{\lambda-\kappa_{j}}\right\rangle \right\vert ^{2}.
\end{align*}
The second equality above is due to the fact that we assume that for $j\neq
j^{\prime},$%
\begin{equation}
\left(  \left\langle \mathbf{w}_{\lambda-\kappa_{j}},\pi_{\lambda-\kappa_{j}%
}\left(  \gamma_{1}\right)  \mathbf{u}_{\lambda-\kappa_{j}}\right\rangle
\right)  _{\gamma_{1}\in\Gamma_{\mathfrak{b}}\Gamma_{\mathfrak{a}}}%
\perp\left(  \left\langle \mathbf{w}_{\lambda-\kappa_{j^{\prime}}}%
,\pi_{\lambda-\kappa_{j}}\left(  \gamma_{1}\right)  \mathbf{u}_{\lambda
-\kappa_{j^{\prime}}}\right\rangle \right)  _{\gamma_{1}\in\Gamma
_{\mathfrak{b}}\Gamma_{\mathfrak{a}}}\text{ for }\lambda\in\Lambda.
\end{equation}
Thus, the equality given in (\ref{summations}) and (\ref{summations2}) holds.
Next, coming back to (\ref{c2}), we obtain
\[
\sum_{j=1}^{\mathrm{card}\left(  \mathbf{P}\right)  }\left\Vert \psi
_{j}\right\Vert _{\mathbf{H}\left(  \mathbf{e,}A_{j}\right)  }^{2}%
=\sum_{\gamma_{1}\in\Gamma_{\mathfrak{b}}\Gamma_{\mathfrak{a}}}\int
_{\Lambda\text{ }}\left\vert a_{\gamma_{1}}\left(  \lambda\right)  \right\vert
^{2}d\lambda
\]
where
\[
a_{\gamma_{1}}\left(  \lambda\right)  =\sum_{j=1}^{\mathrm{card}\left(
\mathbf{P}\right)  }\left\langle \mathbf{w}_{\lambda-\kappa_{j}}%
,\overset{\mathbf{q}_{j,\gamma_{1}}\left(  \lambda\right)  }{\overbrace
{\pi_{\lambda-\kappa_{j}}\left(  \gamma_{1}\right)  \mathbf{u}_{\lambda
-\kappa_{j}}r\left(  \lambda-\kappa_{j}\right)  ^{1/2}}}\right\rangle .
\]
Writing $\lambda\left(  m\right)  =e^{2\pi i\left\langle \lambda
,m\right\rangle },$ it follows that%
\begin{align*}
\sum_{j=1}^{\mathrm{card}\left(  \mathbf{P}\right)  }\left\Vert \psi
_{j}\right\Vert _{\mathbf{H}\left(  \mathbf{e,}A_{j}\right)  }^{2}  &
=\sum_{\gamma_{1}\in\Gamma_{\mathfrak{b}}\Gamma_{\mathfrak{a}}}\left\Vert
a_{\gamma_{1}}\right\Vert _{L^{2}\left(  \Lambda\right)  }^{2}\\
&  =\sum_{\gamma_{1}\in\Gamma_{\mathfrak{b}}\Gamma_{\mathfrak{a}}}\left\Vert
\widehat{a_{\gamma_{1}}}\right\Vert _{l^{2}\left(
\mathbb{Z}
^{n-2d}\right)  }^{2}\\
&  =\sum_{\gamma_{1}\in\Gamma_{\mathfrak{b}}\Gamma_{\mathfrak{a}}}\sum_{m\in%
\mathbb{Z}
^{n-2d}}\left\vert \widehat{a_{\gamma_{1}}}\left(  m\right)  \right\vert
^{2}\\
&  =\sum_{\gamma_{1}\in\Gamma_{\mathfrak{b}}\Gamma_{\mathfrak{a}}}\sum_{m\in%
\mathbb{Z}
^{n-2d}}\left\vert \int_{\Lambda\text{ }}a_{\gamma_{1}}\left(  \lambda\right)
\lambda\left(  m\right)  d\lambda\right\vert ^{2}\\
&  =\sum_{\gamma_{1}\in\Gamma_{\mathfrak{b}}\Gamma_{\mathfrak{a}}}\sum_{m\in%
\mathbb{Z}
^{n-2d}}\left\vert \sum_{j=1}^{\mathrm{card}\left(  \mathbf{P}\right)  }%
\int_{\Lambda\text{ }}\left\langle \mathbf{w}_{\lambda-\kappa_{j}}%
,\mathbf{q}_{j,\gamma_{1}}\left(  \lambda\right)  \right\rangle \lambda\left(
m\right)  d\lambda\right\vert ^{2}.
\end{align*}
Next, letting
\[
\overline{\lambda\left(  m\right)  }=\overline{e^{2\pi i\left\langle
\lambda,m\right\rangle }}=e^{-2\pi i\left\langle \lambda,m\right\rangle },
\]
we obtain that
\begin{align*}
\sum_{j=1}^{\mathrm{card}\left(  \mathbf{P}\right)  }\left\Vert \psi
_{j}\right\Vert _{\mathbf{H}\left(  \mathbf{e,}A_{j}\right)  }^{2}  &
=\sum_{\gamma_{1}\in\Gamma_{\mathfrak{b}}\Gamma_{\mathfrak{a}}}\sum_{m\in%
\mathbb{Z}
^{n-2d}}\left\vert \int_{\Lambda\text{ }}\sum_{j=1}^{\mathrm{card}\left(
\mathbf{P}\right)  }\left\langle \mathbf{w}_{\lambda-\kappa_{j}}%
,\overline{\lambda\left(  m\right)  }\mathbf{q}_{j,\gamma_{1}}\left(
\lambda\right)  \right\rangle d\lambda\right\vert ^{2}\\
&  =%
{\displaystyle\sum\limits_{\gamma\in\Gamma}}
\left\vert \int_{\mathbf{E}^{\circ}}\left\langle \mathcal{P}\psi\left(
\sigma\right)  ,\pi_{\sigma}\left(  \gamma\right)  \overset{\mathcal{P}%
\phi\left(  \sigma\right)  }{\overbrace{\left\vert \sigma\right\vert
^{-1/2}\mathbf{u}_{\sigma}\otimes\mathbf{e}_{\sigma}}}\right\rangle
_{\mathcal{HS}}\left\vert \sigma\right\vert d\sigma\right\vert ^{2}\\
&  =%
{\displaystyle\sum\limits_{\gamma\in\Gamma}}
\left\vert \int_{\mathbf{E}^{\circ}}\left\langle \mathcal{P}\psi\left(
\sigma\right)  ,\pi_{\sigma}\left(  \gamma\right)  \mathcal{P}\phi\left(
\sigma\right)  \right\rangle _{\mathcal{HS}}\left\vert \sigma\right\vert
d\sigma\right\vert ^{2}\\
&  =%
{\displaystyle\sum\limits_{\gamma\in\Gamma}}
\left\vert \left\langle \psi,L\left(  \gamma\right)  \phi\right\rangle
_{\mathbf{H}\left(  \mathbf{e,E}^{\circ}\right)  }\right\vert ^{2}.
\end{align*}
Finally, we arrive at this fact:
\[
\left\Vert \psi\right\Vert _{\mathbf{H}\left(  \mathbf{e,E}^{\circ}\right)
}^{2}=%
{\displaystyle\sum\limits_{\gamma\in\Gamma}}
\left\vert \left\langle \psi,L\left(  \gamma\right)  \phi\right\rangle
_{\mathbf{H}\left(  \mathbf{e,E}^{\circ}\right)  }\right\vert ^{2}.
\]
Thus, $L\left(  \Gamma\right)  \phi$ is a Parseval frame for $\mathbf{H}%
\left(  \mathbf{e,E}^{\circ}\right)  $. Now, we compute the norm of the vector
$\phi.$ Since
\[
\mathcal{P}\phi\left(  \lambda\right)  =\left(  \mathbf{u}_{\lambda}%
\otimes\mathbf{e}_{\lambda}\right)  \left\vert \det B\left(  \lambda\right)
\right\vert ^{-1/2},\text{ and }\mathbf{u}_{\lambda}=\left\vert \det B\left(
\lambda\right)  \right\vert ^{1/2}\chi_{E\left(  \lambda\right)  }%
\]
then
\[
\mathcal{P}\phi\left(  \lambda\right)  =\left(  \left\vert \det B\left(
\lambda\right)  \right\vert ^{1/2}\chi_{E\left(  \lambda\right)  }%
\otimes\mathbf{e}_{\lambda}\right)  \left\vert \det B\left(  \lambda\right)
\right\vert ^{-1/2}=\chi_{E\left(  \lambda\right)  }\otimes\mathbf{e}%
_{\lambda}.
\]
Since $E\left(  \lambda\right)  $ is a fundamental domain for $%
\mathbb{Z}
^{d},$ it follows that
\[
\left\Vert \phi\right\Vert _{\mathbf{H}\left(  \mathbf{e,E}^{\circ}\right)
}^{2}=1.
\]
Finally, because $L$ is a unitary representation, and using the fact that
$L\left(  \Gamma\right)  \phi$ forms a unit norm Parseval frame, then
$L\left(  \Gamma\right)  \phi$ forms an orthonormal basis in $\mathbf{H}%
\left(  \mathbf{e,E}^{\circ}\right)  .$
\end{proof}

\begin{lemma}
\label{admissible}If $\phi$ satisfies all conditions given in Lemma \ref{ONB},
then $\left\Vert \mathcal{P}\phi\left(  \lambda\right)  \right\Vert
_{\mathcal{HS}}=1$ for $\lambda\in\mathbf{E}^{\circ}$ and $\phi$ is an
admissible vector for the representation $\left(  L,\mathbf{H}\left(
\mathbf{e,E}^{\circ}\right)  \right)  .$
\end{lemma}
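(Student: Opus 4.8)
The plan is to mirror the argument already used in the proof of Theorem \ref{main}, now applied to the set $\mathbf{E}^{\circ}$ in place of $\mathbf{E}\cap\mathbf{C}$. First I would recall from the proof of Lemma \ref{ONB} that for $\phi$ constructed there one has the explicit rank-one description
\[
\mathcal{P}\phi\left(  \lambda\right)  =\chi_{E\left(  \lambda\right)  }\otimes\mathbf{e}_{\lambda}\text{ for }\lambda\in\mathbf{E}^{\circ},
\]
where $E\left(  \lambda\right)  $ tiles $\mathbb{R}^{d}$ by $\mathbb{Z}^{d}$. Since a set that tiles $\mathbb{R}^{d}$ by $\mathbb{Z}^{d}$ is a fundamental domain for $\mathbb{Z}^{d}$, it has Lebesgue measure $1$, so $\left\Vert \chi_{E\left(  \lambda\right)  }\right\Vert _{L^{2}\left(  \mathbb{R}^{d}\right)  }^{2}=1$. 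Using that $\mathbf{e}_{\lambda}$ is a unit vector together with the identity $\left\Vert u\otimes v\right\Vert _{\mathcal{HS}}=\left\Vert u\right\Vert \left\Vert v\right\Vert $, I obtain $\left\Vert \mathcal{P}\phi\left(  \lambda\right)  \right\Vert _{\mathcal{HS}}^{2}=\left\Vert \chi_{E\left(  \lambda\right)  }\right\Vert _{L^{2}\left(  \mathbb{R}^{d}\right)  }^{2}\left\Vert \mathbf{e}_{\lambda}\right\Vert _{L^{2}\left(  \mathbb{R}^{d}\right)  }^{2}=1$ for every $\lambda\in\mathbf{E}^{\circ}$, which is the first assertion.

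For the admissibility claim I would observe that $\mathbf{E}^{\circ}=\mathbf{E}\cap\mathbf{R}$ is contained in the compact set $\mathbf{R}$, hence bounded, and that its Plancherel measure is finite; indeed $\mu\left(  \mathbf{E}^{\circ}\right)  =\int_{\mathbf{E}^{\circ}}\left\vert \det B\left(  \lambda\right)  \right\vert d\lambda=1<\infty$ by the standing assumption of this subsection. Since $N$ is unimodular and the field $\lambda\mapsto\mathcal{P}\phi\left(  \lambda\right)  $ has constant Hilbert-Schmidt norm equal to $1$ on $\mathbf{E}^{\circ}$, the criterion recorded on Page $127$ of \cite{Fuhr cont} — the same one invoked in the proof of Theorem \ref{main} — applies and shows that $\left(  L,\mathbf{H}\left(  \mathbf{e,E}^{\circ}\right)  \right)  $ is an admissible representation and that $\phi$ is an admissible vector for it. This yields the second assertion and completes the proof.

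I do not anticipate a genuine obstacle here; the content is essentially bookkeeping. The only point requiring care is verifying that all hypotheses of the admissibility criterion from \cite{Fuhr cont} are in force — unimodularity of $N$, finiteness of $\mu\left(  \mathbf{E}^{\circ}\right)  $, and the constancy $\left\Vert \mathcal{P}\phi\left(  \lambda\right)  \right\Vert _{\mathcal{HS}}=1$ on the support — exactly as was done for the set $\mathbf{E}\cap\mathbf{C}$ in the proof of Theorem \ref{main}.
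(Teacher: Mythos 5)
Your argument is correct and follows the same route as the paper: the paper computes $\left\Vert \mathcal{P}\phi\left(\lambda\right)\right\Vert_{\mathcal{HS}}^{2}=\left\vert \det B\left(\lambda\right)\right\vert^{-1}\left\Vert \mathbf{u}_{\lambda}\right\Vert^{2}=1$ using $\left\Vert \mathbf{u}_{\lambda}\right\Vert^{2}=\left\vert \det B\left(\lambda\right)\right\vert$, which is exactly your observation that $\chi_{E\left(\lambda\right)}$ has unit norm because $E\left(\lambda\right)$ is a fundamental domain for $\mathbb{Z}^{d}$, and then invokes the same admissibility criterion from F\"uhr via unimodularity of $N$ and $\mu\left(\mathbf{E}^{\circ}\right)<\infty$. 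No gaps.
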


\begin{proof}
For any given $\lambda\in\mathbf{E}^{\circ},$
\begin{align*}
\left\Vert \mathcal{P}\left(  \phi\right)  \left(  \lambda\right)  \right\Vert
_{\mathcal{HS}}^{2}  &  =\left\Vert \mathbf{u}_{\lambda}\otimes\mathbf{e}%
_{\lambda}\left\vert \det B\left(  \lambda\right)  \right\vert ^{-1/2}%
\right\Vert _{\mathcal{HS}}^{2}\\
&  =\left\vert \det B\left(  \lambda\right)  \right\vert ^{-1}\left\Vert
\mathbf{u}_{\lambda}\right\Vert _{L^{2}\left(
\mathbb{R}
^{d}\right)  }^{2}\\
&  =1.
\end{align*}
Since $N$ is unimodular, and since $\mu\left(  \mathbf{E}^{\circ}\right)
<\infty$ then from \cite{Fuhr cont}, Page $126,$ $\left(  L,\mathbf{H}\left(
\mathbf{e,E}^{\circ}\right)  \right)  $ is an admissible representation of $N$
and $\phi$ is an admissible vector for the representation $\left(
L,\mathbf{H}\left(  \mathbf{e,E}^{\circ}\right)  \right)  $.
\end{proof}

\begin{remark}
A proof of Theorem \ref{steps} is derived by applying Proposition \ref{sinc1}, Lemma \ref{ONB} and Lemma \ref{admissible}.
\end{remark}

\subsection{Additional Observations}

Let $m$ be the Lebesgue measure on $%
\mathbb{R}
^{n-2d}.$ Given two measurable sets $A,B\subseteq%
\mathbb{R}
^{n-2d},$
\[
A\Delta B=\left(  A-B\right)  \cup\left(  B-A\right)
\]
is the symmetric difference of the sets. Now, let us assume that there exists
a fundamental domain $\Lambda$ of $%
\mathbb{Z}
^{n-2d}$ such that%
\[
m\left(  \mathbf{E}^{\circ}\Delta\left(
{\displaystyle\bigcup\limits_{j\in S}}
\left(  \Lambda+k_{j}\right)  \right)  \right)  =0\mathbf{.}%
\]
That is, up to a set of Lebesgue measure zero, $\mathbf{E}^{\circ}$ is a
finite disjoint union of sets which are $%
\mathbb{Z}
^{n-2d}$-congruent to a fundamental domain of $%
\mathbb{R}
^{n-2d}.$ We acknowledge that this is a very strong condition to impose.
However, under this condition, we would like to present some simple sufficient
conditions for the statement
\[
\left(  \left\langle \mathbf{f},\pi_{\lambda-\kappa_{j}}\left(  \gamma
_{1}\right)  \mathbf{u}_{\lambda-\kappa_{j}}\right\rangle \right)
_{\gamma_{1}\in\Gamma_{\mathfrak{b}}\Gamma_{\mathfrak{a}}}\perp\left(
\left\langle \mathbf{g},\pi_{\lambda-\kappa_{j^{\prime}}}\left(  \gamma
_{1}\right)  \mathbf{u}_{\lambda-\kappa_{j^{\prime}}}\right\rangle \right)
_{\gamma_{1}\in\Gamma_{\mathfrak{b}}\Gamma_{\mathfrak{a}}}%
\]
given in Lemma \ref{ONB}.

\begin{lemma}
\label{Bl}Let us assume that there exists a fundamental domain $\Lambda$ of $%
\mathbb{Z}
^{n-2d}$ such that
\[
m\left(  \mathbf{E}^{\circ}\Delta\left(
{\displaystyle\bigcup\limits_{j\in S}}
\left(  \Lambda+k_{j}\right)  \right)  \right)  =0.
\]
For $j\neq j^{\prime},\lambda\in\Lambda$ and $\mathbf{u}_{\lambda-\kappa_{j}%
},\mathbf{u}_{\lambda-\kappa_{j^{\prime}}}\in L^{2}\left(
\mathbb{R}
^{d}\right)  $ as given in Lemma \ref{ONB}, if for any fixed $m\in%
\mathbb{Z}
^{d},$%
\[%
{\displaystyle\bigcup\limits_{\kappa_{s}\in S}}
B\left(  \lambda-\kappa_{s}\right)  ^{tr}\left(  E\left(  \lambda-\kappa
_{s}\right)  +m\right)  \text{ is a subset of a fundamental domain for }%
\mathbb{Z}
^{d}%
\]
and if%
\[
B\left(  \lambda-\kappa_{j}\right)  ^{tr}\left(  E\left(  \lambda-\kappa
_{j}\right)  +m\right)  \cap B\left(  \lambda-\kappa_{j^{\prime}}\right)
^{tr}\left(  E\left(  \lambda-\kappa_{j^{\prime}}\right)  +m\right)
\]
is a null set then
\[
\left(  \left\langle \mathbf{f},\pi_{\lambda-\kappa_{j}}\left(  \gamma
_{1}\right)  \mathbf{u}_{\lambda-\kappa_{j}}\right\rangle \right)
_{\gamma_{1}\in\Gamma_{\mathfrak{b}}\Gamma_{\mathfrak{a}}}\perp\left(
\left\langle \mathbf{g},\pi_{\lambda-\kappa_{j^{\prime}}}\left(  \gamma
_{1}\right)  \mathbf{u}_{\lambda-\kappa_{j^{\prime}}}\right\rangle \right)
_{\gamma_{1}\in\Gamma_{\mathfrak{b}}\Gamma_{\mathfrak{a}}}%
\]
for all $\mathbf{f},\mathbf{g}\in L^{2}\left(
\mathbb{R}
^{d}\right)  .$
\end{lemma}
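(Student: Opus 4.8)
The plan is to reduce the orthogonality of the two $\ell^2(\Gamma_{\mathfrak{b}}\Gamma_{\mathfrak{a}})$-sequences to a disjointness-of-supports statement after applying a suitable unitary transform. First I would write out explicitly, using the formulas for $\pi_\lambda$ from Proposition \ref{analysis}, what the inner product $\langle \mathbf{f}, \pi_{\lambda-\kappa_j}(\gamma_1)\mathbf{u}_{\lambda-\kappa_j}\rangle$ looks like when $\gamma_1 = \exp(\sum n_i X_i)\exp(\sum k_i Y_i)$ runs over $\Gamma_{\mathfrak{b}}\Gamma_{\mathfrak{a}}$. Because $\pi_{\lambda-\kappa_j}$ acts on $L^2(\mathbb{R}^d)$ by the Schr\"odinger-type action (modulation by $e^{-2\pi i\langle B(\lambda-\kappa_j)k,\,\cdot\,\rangle}$ and translation by $n$), this inner product is, up to a unimodular constant, a value of the short-time Fourier transform $V_{\mathbf{u}_{\lambda-\kappa_j}}\mathbf{f}$ sampled along the lattice $\mathbb{Z}^d \times B(\lambda-\kappa_j)^{tr}\mathbb{Z}^d$. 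Since $\mathbf{u}_{\lambda-\kappa_j} = |\det B(\lambda-\kappa_j)|^{1/2}\chi_{E(\lambda-\kappa_j)}$ is an indicator of a set that tiles $\mathbb{R}^d$ by $\mathbb{Z}^d$, the ``translation part'' of the Gabor coefficient, for fixed frequency index $m\in\mathbb{Z}^d$, amounts to taking the Fourier coefficients (in the variable dual to $\mathbb{Z}^d$) of the function $\mathbf{f}$ restricted and periodized over $E(\lambda-\kappa_j)+$ shifts; equivalently, after the tiling identification $L^2(\mathbb{R}^d)\cong \bigoplus_{m}L^2(E(\lambda-\kappa_j)+m)$, the sequence indexed by $\Gamma_{\mathfrak{b}}\Gamma_{\mathfrak{a}}$ is (a unitary image of) $\mathbf{f}$ itself, but with the frequency index $m$ recording which translate $E(\lambda-\kappa_j)+m$ we are on, transported to a subset of the torus via the linear map $B(\lambda-\kappa_j)^{tr}$.

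The key step is then the following: the $\ell^2$-inner product of the two sequences (one built from $\mathbf{f}$ and $\kappa_j$, the other from $\mathbf{g}$ and $\kappa_{j'}$), split according to the frequency index $m\in\mathbb{Z}^d$, becomes for each $m$ an integral over the torus $\mathbb{R}^d/\mathbb{Z}^d$ of a product of two functions supported respectively on the images $B(\lambda-\kappa_j)^{tr}(E(\lambda-\kappa_j)+m)$ and $B(\lambda-\kappa_{j'})^{tr}(E(\lambda-\kappa_{j'})+m)$ projected to the torus. The first hypothesis --- that $\bigcup_{\kappa_s\in S} B(\lambda-\kappa_s)^{tr}(E(\lambda-\kappa_s)+m)$ sits inside a fundamental domain of $\mathbb{Z}^d$ --- guarantees that passing to the torus does not create spurious overlaps from the $\mathbb{Z}^d$-folding, so that the torus supports are genuinely the Euclidean sets $B(\lambda-\kappa_s)^{tr}(E(\lambda-\kappa_s)+m)$. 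The second hypothesis --- that $B(\lambda-\kappa_j)^{tr}(E(\lambda-\kappa_j)+m)\cap B(\lambda-\kappa_{j'})^{tr}(E(\lambda-\kappa_{j'})+m)$ is a null set --- then forces each of these $m$-indexed integrals to vanish, hence the full $\ell^2$-inner product vanishes. Summing the vanishing contributions over $m\in\mathbb{Z}^d$ gives the desired orthogonality for all $\mathbf{f},\mathbf{g}\in L^2(\mathbb{R}^d)$.

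I would organize the write-up as: (i) fix $\lambda\in\Lambda$, $j\neq j'$, and $m\in\mathbb{Z}^d$, and compute $\langle \mathbf{f},\pi_{\lambda-\kappa_j}(\exp(\sum n_iX_i)\exp(\sum k_iY_i))\mathbf{u}_{\lambda-\kappa_j}\rangle$ in closed form using the tiling property of $E(\lambda-\kappa_j)$; (ii) recognize the sum over $n\in\mathbb{Z}^d$ (the $\Gamma_{\mathfrak{a}}$-translations) as a Fourier series, identifying the sequence with an $L^2$-function on $B(\lambda-\kappa_j)^{tr}(E(\lambda-\kappa_j)+m)$ pushed to the torus; (iii) invoke the first hypothesis to assert the push-forward to the torus is injective on the relevant union; (iv) compute the $\ell^2$-pairing of the two sequences as $\sum_{m\in\mathbb{Z}^d}$ of torus integrals of products supported on sets with null intersection, and conclude. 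The main obstacle I anticipate is step (ii): carefully tracking how the modulation variable $B(\lambda-\kappa_j)k$ (with $k\in\mathbb{Z}^d$) together with the tiling set $E(\lambda-\kappa_j)$ assembles into a clean ``one function on a transported fundamental-domain piece'' picture, and making sure the measure-theoretic bookkeeping (Parseval on $\ell^2(\mathbb{Z}^d)$ in the frequency variable, Fubini to interchange the $\Gamma_{\mathfrak{b}}\Gamma_{\mathfrak{a}}$-sum with the $m$-sum) is justified; once that identification is in place, the two hypotheses apply essentially verbatim to kill the pairing.
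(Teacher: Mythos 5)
Your proposal is correct and follows essentially the same route as the paper's proof: write out the Gabor coefficient, change variables by $B\left(\lambda-\kappa_{j}\right)^{tr}$ so that, for each fixed translation index $m$, the coefficient sequence becomes the Fourier coefficients of a function supported on $B\left(\lambda-\kappa_{j}\right)^{tr}\left(E\left(\lambda-\kappa_{j}\right)+m\right)$ sitting inside a common fundamental domain for $\mathbb{Z}^{d}$, then apply Parseval and the null-intersection hypothesis termwise in $m$. The only slip is terminological: $m$, which labels the translates $E\left(\lambda-\kappa_{j}\right)+m$, is the $\Gamma_{\mathfrak{a}}$-translation index, and the Fourier series is taken over the $\Gamma_{\mathfrak{b}}$-modulation index, not the other way around as suggested in your step (ii).
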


\begin{proof}
Clearly, in order to compute the inner product of the sequences
\[
\left(  \left\langle \mathbf{f},\pi_{\lambda-\kappa_{j}}\left(  \gamma
_{1}\right)  \mathbf{u}_{\lambda-\kappa_{j}}\right\rangle \right)
_{\gamma_{1}\in\Gamma_{\mathfrak{b}}\Gamma_{\mathfrak{a}}},\left(
\left\langle \mathbf{g},\pi_{\lambda-\kappa_{j^{\prime}}}\left(  \gamma
_{1}\right)  \mathbf{u}_{\lambda-\kappa_{j^{\prime}}}\right\rangle \right)
_{\gamma_{1}\in\Gamma_{\mathfrak{b}}\Gamma_{\mathfrak{a}}}\in l^{2}\left(
\Gamma_{\mathfrak{b}}\Gamma_{\mathfrak{a}}\right)
\]
we need to calculate a formula for the following sum:
\[
\sum_{\gamma_{1}\in\Gamma_{\mathfrak{b}}\Gamma_{\mathfrak{a}}}\left(
\left\langle \mathbf{f},\pi_{\lambda-\kappa_{j}}\left(  \gamma_{1}\right)
\mathbf{u}_{\lambda-\kappa_{j}}\right\rangle \overline{\left\langle
\mathbf{g},\pi_{\lambda-\kappa_{j^{\prime}}}\left(  \gamma_{1}\right)
\mathbf{u}_{\lambda-\kappa_{j^{\prime}}}\right\rangle }\right)  .
\]
\ First,
\begin{align*}
\left\langle \mathbf{f},\pi_{\lambda-\kappa_{j}}\left(  \gamma_{1}\right)
\mathbf{u}_{\lambda-\kappa_{j}}\right\rangle  &  =\int_{\mathbf{%
\mathbb{R}
}}\mathbf{f}\left(  t\right)  \overline{\pi_{\lambda-\kappa_{j}}\left(
\gamma_{1}\right)  \mathbf{u}_{\lambda-\kappa_{j}}\left(  t\right)  }dt\\
&  =\int_{\mathbf{%
\mathbb{R}
}}\mathbf{f}\left(  t\right)  \overline{e^{2\pi i\left\langle l,B\left(
\lambda-\kappa_{j}\right)  ^{tr}t\right\rangle }\mathbf{u}_{\lambda-\kappa
_{j}}\left(  t-m\right)  }dt.
\end{align*}
Put $s=B\left(  \lambda-\kappa_{j}\right)  ^{tr}t.$ We recall that
$\mathbf{u}_{\lambda}=\left\vert \det B\left(  \lambda\right)  \right\vert
^{1/2}\chi_{E\left(  \lambda\right)  }.$ So,
\begin{align*}
&  \left\langle \mathbf{f},\pi_{\lambda-\kappa_{j}}\left(  \gamma_{1}\right)
\mathbf{u}_{\lambda-\kappa_{j}}\right\rangle \\
&  =\int_{B\left(  \lambda-\kappa_{j}\right)  ^{tr}\left(  E\left(
\lambda-\kappa_{j}\right)  +m\right)  }\frac{\mathbf{f}\left(  B\left(
\lambda-\kappa_{j}\right)  ^{-tr}s\right)  }{\left\vert \det B\left(
\lambda-\kappa_{j}\right)  \right\vert ^{1/2}}e^{-2\pi i\left\langle
l,s\right\rangle }ds.
\end{align*}
Similarly,
\begin{align*}
&  \left\langle \mathbf{g},\pi_{\lambda-\kappa_{j^{\prime}}}\left(  \gamma
_{1}\right)  \mathbf{u}_{\lambda-\kappa_{j^{\prime}}}\right\rangle \\
&  =\int_{B\left(  \lambda-\kappa_{j^{\prime}}\right)  ^{tr}\left(  E\left(
\lambda-\kappa_{j^{\prime}}\right)  +m\right)  }\frac{\mathbf{g}\left(
B\left(  \lambda-\kappa_{j^{\prime}}\right)  ^{-tr}s\right)  }{\left\vert \det
B\left(  \lambda-\kappa_{j^{\prime}}\right)  \right\vert ^{1/2}}e^{-2\pi
i\left\langle l,s\right\rangle }ds.
\end{align*}
If for $\lambda-\kappa_{j}\in A_{j},$
\[%
{\displaystyle\bigcup\limits_{\kappa_{s}\in S}}
B\left(  \lambda-\kappa_{s}\right)  ^{tr}\left(  E\left(  \lambda-\kappa
_{s}\right)  +m\right)
\]
is a subset of a fundamental domain of $%
\mathbb{Z}
^{d}$ for distinct $\kappa_{j},\kappa_{j^{\prime}}\in S,$ and if
\[
B\left(  \lambda-\kappa_{j}\right)  ^{tr}\left(  E\left(  \lambda-\kappa
_{j}\right)  +m\right)  \cap B\left(  \lambda-\kappa_{j^{\prime}}\right)
^{tr}\left(  E\left(  \lambda-\kappa_{j^{\prime}}\right)  +m\right)
\]
is a null set then
\[
\left(  \left\langle \mathbf{f},\pi_{\lambda-\kappa_{j}}\left(  \gamma
_{1}\right)  \mathbf{u}_{\lambda-\kappa_{j}}\right\rangle \right)
_{\gamma_{1}\in\Gamma_{\mathfrak{b}}\Gamma_{\mathfrak{a}}}\perp\left(
\left\langle \mathbf{g},\pi_{\lambda-\kappa_{j^{\prime}}}\left(  \gamma
_{1}\right)  \mathbf{u}_{\lambda-\kappa_{j^{\prime}}}\right\rangle \right)
_{\gamma_{1}\in\Gamma_{\mathfrak{b}}\Gamma_{\mathfrak{a}}};
\]
because
\[
\left(  \left\langle \mathbf{f},\pi_{\lambda-\kappa_{j}}\left(  \gamma
_{1}\right)  \mathbf{u}_{\lambda-\kappa_{j}}\right\rangle \right)
_{\gamma_{1}\in\Gamma_{\mathfrak{b}}\Gamma_{\mathfrak{a}}},\text{ and }\left(
\left\langle \mathbf{g},\pi_{\lambda-\kappa_{j^{\prime}}}\left(  \gamma
_{1}\right)  \mathbf{u}_{\lambda-\kappa_{j^{\prime}}}\right\rangle \right)
_{\gamma_{1}\in\Gamma_{\mathfrak{b}}\Gamma_{\mathfrak{a}}}%
\]
are Fourier inverses of the following orthogonal functions
\begin{align}
\theta_{\mathbf{f,}\lambda,j,m}\left(  s\right)   &  =\chi_{B\left(
\lambda-\kappa_{j}\right)  ^{tr}\left(  E\left(  \lambda-\kappa_{j}\right)
+m\right)  }\left(  s\right)  \frac{\mathbf{f}\left(  B\left(  \lambda
-\kappa_{j}\right)  ^{-tr}s\right)  }{\left\vert \det B\left(  \lambda
-\kappa_{j}\right)  \right\vert ^{1/2}},\text{ and }\label{ort1}\\
\theta_{\mathbf{g,}\lambda,j^{\prime},m}\left(  s\right)   &  =\chi_{B\left(
\lambda-\kappa_{j^{\prime}}\right)  ^{tr}\left(  E\left(  \lambda
-\kappa_{j^{\prime}}\right)  +m\right)  }\left(  s\right)  \frac
{\mathbf{g}\left(  B\left(  \lambda-\kappa_{j^{\prime}}\right)  ^{-tr}%
s\right)  }{\left\vert \det B\left(  \lambda-\kappa_{j^{\prime}}\right)
\right\vert ^{1/2}}\text{ respectively.} \label{orth2}%
\end{align}
In fact, we think of the functions above (\ref{ort1}), (\ref{orth2}) as being
elements of $L^{2}\left(  \mathbf{I}_{m,\lambda}\right)  $ such that
$\mathbf{I}_{m,\lambda}$ is a fundamental domain for $%
\mathbb{Z}
^{d}.$ Combining the observations made above, we obtain that for any
$\mathbf{f},\mathbf{g}\in L^{2}\left(
\mathbb{R}
^{d}\right)  ,$
\begin{align*}
&  \sum_{\gamma_{1}\in\Gamma_{\mathfrak{b}}\Gamma_{\mathfrak{a}}}\left\langle
\mathbf{f},\pi_{\lambda-\kappa_{j}}\left(  \gamma_{1}\right)  \mathbf{u}%
_{\lambda-\kappa_{j}}\right\rangle \overline{\left\langle \mathbf{g}%
,\pi_{\lambda-\kappa_{j^{\prime}}}\left(  \gamma_{1}\right)  \mathbf{u}%
_{\lambda-\kappa_{j^{\prime}}}\right\rangle }\\
&  =\sum_{m\in%
\mathbb{Z}
^{d}}\sum_{l\in%
\mathbb{Z}
^{d}}\left(  \int_{\mathbf{I}_{m,\lambda}}\theta_{\mathbf{f,}\lambda
,j,m}\left(  s\right)  e^{-2\pi i\left\langle l,s\right\rangle }ds\right)
\overline{\left(  \int_{\mathbf{I}_{m,\lambda}}\theta_{\mathbf{g,}%
\lambda,j^{\prime},m}\left(  s\right)  e^{-2\pi i\left\langle l,x\right\rangle
}dx\right)  }\\
&  =\sum_{m\in%
\mathbb{Z}
^{d}}\sum_{l\in%
\mathbb{Z}
^{d}}\widehat{\theta_{\mathbf{f,}\lambda,j,m}}\left(  l\right)  \overline
{\widehat{\theta_{\mathbf{g,}\lambda,j^{\prime},m}}\left(  l\right)  }\\
&  =\sum_{m\in%
\mathbb{Z}
^{d}}\overset{=0}{\overbrace{\sum_{l\in%
\mathbb{Z}
^{d}}\widehat{\theta_{\mathbf{f,}\lambda,j,m}}\left(  l\right)  \overline
{\widehat{\theta_{\mathbf{g,}\lambda,j^{\prime},m}}\left(  l\right)  }}}\\
&  =0.
\end{align*}
Thus
\[
\left(  \left\langle \mathbf{f},\pi_{\lambda-\kappa_{j}}\left(  \gamma
_{1}\right)  \mathbf{u}_{\lambda-\kappa_{j}}\right\rangle \right)
_{\gamma_{1}\in\Gamma_{\mathfrak{b}}\Gamma_{\mathfrak{a}}}\perp\left(
\left\langle \mathbf{g},\pi_{\lambda-\kappa_{j^{\prime}}}\left(  \gamma
_{1}\right)  \mathbf{u}_{\lambda-\kappa_{j^{\prime}}}\right\rangle \right)
_{\gamma_{1}\in\Gamma_{\mathfrak{b}}\Gamma_{\mathfrak{a}}}.
\]
This concludes the proof.
\end{proof}

In light of Lemma \ref{ONB} and Lemma \ref{Bl}, the following holds true.

\begin{proposition}
\label{onb}Let us assume that there exists a fundamental domain $\Lambda$ of $%
\mathbb{Z}
^{n-2d}$ such that
\[
m\left(  \mathbf{E}^{\circ}\Delta\left(
{\displaystyle\bigcup\limits_{j\in S}}
\left(  \Lambda+k_{j}\right)  \right)  \right)  =0.
\]
If $%
{\displaystyle\bigcup\limits_{\kappa_{s}\in S}}
B\left(  \lambda-\kappa_{s}\right)  ^{tr}\left(  E\left(  \lambda-\kappa
_{s}\right)  +m\right)  $ is a subset of a fundamental domain of $%
\mathbb{Z}
^{d}$ and if
\[
B\left(  \lambda-\kappa_{j}\right)  ^{tr}\left(  E\left(  \lambda-\kappa
_{j}\right)  +m\right)  \cap B\left(  \lambda-\kappa_{j^{\prime}}\right)
^{tr}\left(  E\left(  \lambda-\kappa_{j^{\prime}}\right)  +m\right)
\]
is a null set for $\lambda\in\Lambda\ $for $m\in%
\mathbb{Z}
^{d}$ and for distinct $\kappa_{j},\kappa_{j^{\prime}}\in S$ then $L\left(
\Gamma\right)  \left(  \phi\right)  $ is an orthonormal basis for the Hilbert
space $\mathbf{H}\left(  \mathbf{e,E}^{\circ}\right)  $.
\end{proposition}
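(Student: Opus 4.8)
The plan is to obtain this proposition as an immediate consequence of the two preceding lemmas, Lemma \ref{ONB} and Lemma \ref{Bl}, after matching the geometric hypothesis on $\mathbf{E}^{\circ}$ with the partition machinery already set up.

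First I would observe that the assumption $m\!\left(\mathbf{E}^{\circ}\Delta\bigcup_{j\in S}(\Lambda+k_{j})\right)=0$ says exactly that, up to a Lebesgue-null set, $\mathbf{E}^{\circ}$ is the finite disjoint union $\bigcup_{\kappa_{j}\in S}\bigl((\Lambda-\kappa_{j})\cap\mathbf{E}^{\circ}\bigr)$ with each piece $A_{j}=(\Lambda-\kappa_{j})\cap\mathbf{E}^{\circ}$ of positive measure. This is precisely the partition $\mathbf{P}=\{A_{j}\}$ fixed just before Lemma \ref{phi}, and it satisfies the requirements of Lemma \ref{j}: each $A_{j}$ sits inside a fundamental domain for $\mathbb{Z}^{n-2d}$, and $\mathbf{H}\left(\mathbf{e},\mathbf{E}^{\circ}\right)=\bigoplus_{j}\mathbf{H}\left(\mathbf{e},A_{j}\right)$. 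Lemma \ref{phi} then furnishes the vectors $\phi_{j}\in\mathbf{H}\left(\mathbf{e},A_{j}\right)$ with $L(\Gamma)\phi_{j}$ a Parseval frame for $\mathbf{H}\left(\mathbf{e},A_{j}\right)$ and $\mathcal{P}\phi_{j}(\lambda)=\left(\mathbf{u}_{\lambda}\otimes\mathbf{e}_{\lambda}\right)\left\vert\det B(\lambda)\right\vert^{-1/2}$ with $\mathbf{u}_{\lambda}=\left\vert\det B(\lambda)\right\vert^{1/2}\chi_{E(\lambda)}$, so that $\phi=\sum_{j}\phi_{j}$ satisfies $\mathcal{P}\phi(\lambda)=\chi_{E(\lambda)}\otimes\mathbf{e}_{\lambda}$ on $\mathbf{E}^{\circ}$.

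Next I would invoke Lemma \ref{Bl}. Its two hypotheses are verbatim the two displayed conditions assumed in this proposition, namely that $\bigcup_{\kappa_{s}\in S}B(\lambda-\kappa_{s})^{tr}\bigl(E(\lambda-\kappa_{s})+m\bigr)$ is contained in a fundamental domain for $\mathbb{Z}^{d}$, and that $B(\lambda-\kappa_{j})^{tr}\bigl(E(\lambda-\kappa_{j})+m\bigr)\cap B(\lambda-\kappa_{j^{\prime}})^{tr}\bigl(E(\lambda-\kappa_{j^{\prime}})+m\bigr)$ is null for distinct $\kappa_{j},\kappa_{j^{\prime}}\in S$, all $m\in\mathbb{Z}^{d}$ and $\lambda\in\Lambda$. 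Lemma \ref{Bl} therefore yields, for all $\mathbf{f},\mathbf{g}\in L^{2}(\mathbb{R}^{d})$,
\[
\left(\left\langle\mathbf{f},\pi_{\lambda-\kappa_{j}}(\gamma_{1})\mathbf{u}_{\lambda-\kappa_{j}}\right\rangle\right)_{\gamma_{1}\in\Gamma_{\mathfrak{b}}\Gamma_{\mathfrak{a}}}\perp\left(\left\langle\mathbf{g},\pi_{\lambda-\kappa_{j^{\prime}}}(\gamma_{1})\mathbf{u}_{\lambda-\kappa_{j^{\prime}}}\right\rangle\right)_{\gamma_{1}\in\Gamma_{\mathfrak{b}}\Gamma_{\mathfrak{a}}}
\]
in $l^{2}\left(\Gamma_{\mathfrak{b}}\Gamma_{\mathfrak{a}}\right)$, which is exactly the orthogonality hypothesis required in Lemma \ref{ONB}. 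Applying Lemma \ref{ONB} then shows that $L(\Gamma)\phi$ is an orthonormal basis for $\mathbf{H}\left(\mathbf{e},\mathbf{E}^{\circ}\right)$, completing the argument.

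The only genuinely delicate point I anticipate is the null-set bookkeeping: one must check that replacing $\mathbf{E}^{\circ}$ by $\bigcup_{j}(\Lambda+k_{j})$ up to measure zero leaves the spaces $\mathbf{H}\left(\mathbf{e},A_{j}\right)$ and the decomposition $\mathbf{H}\left(\mathbf{e},\mathbf{E}^{\circ}\right)=\bigoplus_{j}\mathbf{H}\left(\mathbf{e},A_{j}\right)$ unchanged, and that the exceptional set of $\lambda$ where the two displayed geometric conditions might fail is still negligible against the Plancherel weight $\left\vert\det B(\lambda)\right\vert\,d\lambda$. Since $\det B$ is a nonvanishing polynomial on $\mathbf{E}^{\circ}$, Lebesgue-null and $\mu$-null subsets of $\mathbf{E}^{\circ}$ coincide, so this presents no real difficulty; with it dispatched, the proposition follows immediately by chaining Lemma \ref{Bl} into Lemma \ref{ONB}.
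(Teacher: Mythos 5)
Your proposal is correct and follows exactly the paper's own route: the paper derives Proposition \ref{onb} simply by chaining Lemma \ref{Bl} (the geometric conditions imply the orthogonality of the sequences) into Lemma \ref{ONB} (that orthogonality implies $L\left(\Gamma\right)\phi$ is an orthonormal basis), which is precisely what you do. Your extra remarks on the partition matching Lemma \ref{j}/Lemma \ref{phi} and on the null-set bookkeeping are sound and only make explicit what the paper leaves implicit.
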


\subsection{Examples}

\begin{example}
Let $N$ be a nilpotent Lie group with Lie algebra spanned by the vectors
$Z_{1},Z_{2},Y_{1},Y_{2},X_{1},X_{2}$ with the following non-trivial Lie
brackets:
\[
\left[  X_{1},Y_{1}\right]  =Z_{1},\left[  X_{2},Y_{2}\right]  =Z_{2}.
\]
In this example, the discrete set
\[
\Gamma=\exp\left(
\mathbb{Z}
Z_{1}+%
\mathbb{Z}
Z_{2}\right)  \exp\left(
\mathbb{Z}
Y_{1}+%
\mathbb{Z}
Y_{2}\right)  \exp\left(
\mathbb{Z}
X_{1}+%
\mathbb{Z}
X_{2}\right)
\]
is actually a uniform subgroup of the Lie group $N.$ Moreover, $N$ is a direct
product of two Heisenberg groups and satisfies all properties described in
Condition \ref{conditiion}. Next, we will apply Theorem \ref{steps} to show
that there exists a left-invariant subspace of $L^{2}\left(  N\right)  $ which
is a $\Gamma$-sampling space with the interpolation property. First, it is
easy to check that
\[
\mu\left(  \mathbf{E}\right)  =\mu\left(  \left\{  \left(  \lambda_{1}%
,\lambda_{2}\right)  \in\mathbb{R}^{2}:\lambda_{1}\lambda_{2}\neq0\text{ and
}\left\vert \lambda_{1}\lambda_{2}\right\vert \leq1\right\}  \right)
=\infty.
\]
Now, let $\mathbf{R}=\left[  -1,1\right]  ^{2}.$ Then
\[
\mu\left(  \mathbf{E\cap R}\right)  =\int_{-1}^{1}\int_{-1}^{1}\left\vert
\lambda_{1}\lambda_{2}\right\vert d\lambda_{1}d\lambda_{2}=1.
\]
Next, we observe for each $\lambda\in\mathbf{E\cap R,}$ $\left[  0,1\right)
^{2}$ tiles $\mathbb{R}^{2}$ by $\mathbb{Z}^{2}$ and packs $\mathbb{R}^{2}$
by
\[
B\left(  \lambda\right)  ^{-tr}\mathbb{Z}^{2}=\left[
\begin{array}
[c]{cc}%
\lambda_{1} & 0\\
0 & \lambda_{2}%
\end{array}
\right]  ^{-1}%
\mathbb{Z}
^{2}.
\]
Next, it is not too hard to check that%
\[
\mathbf{E}^{\circ}\text{ }\mathbf{=}\left\{  \left(  \lambda_{1},\lambda
_{2}\right)  \in\left[  -1,1\right]  ^{2}:\lambda_{1}\lambda_{2}\neq0\right\}
,
\]
and
\[
m\left(  \mathbf{E}^{\circ}\Delta\left(
{\displaystyle\bigcup\limits_{j\in S}}
\left(  \left[  0,1\right)  ^{2}+j\right)  \right)  \right)  =0
\]
where%
\[
S=\left\{  \left[
\begin{array}
[c]{c}%
0\\
0
\end{array}
\right]  ,\left[
\begin{array}
[c]{c}%
-1\\
0
\end{array}
\right]  ,\left[
\begin{array}
[c]{c}%
-1\\
-1
\end{array}
\right]  ,\left[
\begin{array}
[c]{c}%
0\\
-1
\end{array}
\right]  \right\}  .
\]
Moreover, for $\lambda\in\left[  0,1\right)  ^{2}$ and for $m\in%
\mathbb{Z}
^{2},$ we define
\begin{align*}
M_{\lambda,1}  &  =\left[
\begin{array}
[c]{cc}%
\lambda_{1} & 0\\
0 & \lambda_{2}%
\end{array}
\right]  ,M_{\lambda,2}=\left[
\begin{array}
[c]{cc}%
\lambda_{1}-1 & 0\\
0 & \lambda_{2}-1
\end{array}
\right]  ,\\
M_{\lambda,3}  &  =\left[
\begin{array}
[c]{cc}%
\lambda_{1}-1 & 0\\
0 & \lambda_{2}%
\end{array}
\right]  ,M_{\lambda,4}=\left[
\begin{array}
[c]{cc}%
\lambda_{1} & 0\\
0 & \lambda_{2}-1
\end{array}
\right]  ,\\
\mathbf{S}_{\lambda,1,j}  &  =M_{\lambda,1}\left(  \left(  0,1\right)
^{2}+j\right)  ,\text{ }\mathbf{S}_{\lambda,2,j}=M_{\lambda,2}\left(  \left(
0,1\right)  ^{2}+j\right) \\
\mathbf{S}_{\lambda,3,j}  &  =M_{\lambda,3}\left(  \left(  0,1\right)
^{2}+j\right)  ,\mathbf{S}_{\lambda,4,m}=M_{\lambda,4}\left(  \left(
0,1\right)  ^{2}+j\right)  .
\end{align*}
We observe that for all $j\in%
\mathbb{Z}
^{2},$
\[%
{\displaystyle\bigcup\limits_{k=1}^{4}}
\mathbf{S}_{\lambda,k,j}%
\]
is a subset of a fundamental domain for $%
\mathbb{Z}
^{2}$ and $\mathbf{S}_{\lambda,\ell_{1},j}\cap\mathbf{S}_{\lambda,\ell_{2},j}$
is a null set for distinct $\ell_{1},\ell_{2}$ and for almost every
$\lambda\in\left(  0,1\right)  ^{2}.$ Let $\phi\in\mathbf{H}\left(
\mathbf{e,E}^{\circ}\right)  ,$ such that
\[
\mathcal{P}\phi\left(  \lambda\right)  =\chi_{\left[  0,1\right)  ^{2}}\left(
t\right)  \otimes\chi_{\left[  0,1\right)  ^{2}}\left(  t\right)  .
\]
According to Theorem \ref{steps} and Proposition \ref{onb} $L\left(
\Gamma\right)  \phi$ is an orthonormal basis for $\mathbf{H}\left(
\mathbf{e,E}^{\circ}\right)  $ and $W_{\phi}\left(  \mathbf{H}\left(
\mathbf{e,E}^{\circ}\right)  \right)  $ is a $\Gamma$-sampling space with the
interpolation property.
\end{example}

\begin{example}
Put $\alpha=\left(  \frac{\pi}{2}\right)  ^{1/4}.$ Let $N$ be a nilpotent Lie
group with Lie algebra spanned by the vectors $Z_{1},Z_{2},Y_{1},Y_{2}%
,X_{1},X_{2}$ with the following non-trivial Lie brackets:
\begin{align*}
\left[  X_{1},Y_{1}\right]   &  =\alpha Z_{1},\left[  X_{1},Y_{2}\right]
=-\alpha Z_{2}\\
\left[  X_{2},Y_{1}\right]   &  =\alpha Z_{2},\left[  X_{2},Y_{2}\right]
=\alpha Z_{1}.
\end{align*}
Here
\[
B\left(  \lambda\right)  =\left[
\begin{array}
[c]{cc}%
\alpha\lambda_{1} & -\alpha\lambda_{2}\\
\alpha\lambda_{2} & \alpha\lambda_{1}%
\end{array}
\right]  ,
\]
and
\[
\Gamma=\exp\left(
\mathbb{Z}
Z_{1}+%
\mathbb{Z}
Z_{2}\right)  \exp\left(
\mathbb{Z}
Y_{1}+%
\mathbb{Z}
Y_{2}\right)  \exp\left(
\mathbb{Z}
X_{1}+%
\mathbb{Z}
X_{2}\right)  .
\]
Next, the Plancherel measure is
\[
\alpha^{2}\left(  \lambda_{1}^{2}+\lambda_{2}^{2}\right)  d\lambda_{1}%
d\lambda_{2}%
\]
and is supported on the manifold
\[
\Sigma=\left\{  \left(  \lambda_{1},\lambda_{2}\right)  \in\mathbb{R}%
^{2}:\left(  \lambda_{1},\lambda_{2}\right)  \neq\left(  0,0\right)  \right\}
.
\]
Now, we have%
\[
\mathbf{E}=\left\{  \left(  \lambda_{1},\lambda_{2}\right)  \in\Sigma:\text{
}\lambda_{1}^{2}+\lambda_{2}^{2}\leq\frac{1}{\alpha^{2}}\right\}
\]
and
\[
\mu\left(  \mathbf{E}\right)  =\int_{0}^{2\pi}\int_{0}^{\left(  \frac{\pi}%
{2}\right)  ^{-1/4}}r^{3}drd\theta=1.
\]
Thus, in this example $\mathbf{E}^{\circ}=\mathbf{E.}$ Next, we partition the
set $\mathbf{E}^{\circ}$ such that
\[
\mathbf{E}^{\circ}=%
{\displaystyle\bigcup\limits_{j\in S}}
\left(  \left(  \left[  0,1\right)  ^{2}+j\right)  \cap\mathbf{E}^{\circ
}\right)
\]
where%
\[
S=\left\{  \left[
\begin{array}
[c]{c}%
0\\
0
\end{array}
\right]  ,\left[
\begin{array}
[c]{c}%
-1\\
0
\end{array}
\right]  ,\left[
\begin{array}
[c]{c}%
-1\\
-1
\end{array}
\right]  ,\left[
\begin{array}
[c]{c}%
0\\
-1
\end{array}
\right]  \right\}  .
\]
For each $j\in S,$ put $A_{j}=\left(  \left[  0,1\right)  ^{2}\cap
\mathbf{E}^{\circ}\right)  +j.$ Next, appealing to Theorem \ref{main}, for
each $j\in S,$ there exists $\phi_{j}\in\mathbf{H}\left(  \mathbf{e,}%
A_{j}\right)  $ such that $W_{\phi_{j}}\left(  \mathbf{H}\left(
\mathbf{e,}A_{j}\right)  \right)  $ is a $\Gamma$-sampling subspace of
$L^{2}\left(  N\right)  $ with sinc-type function $W_{\phi_{j}}(\phi_{j}).$
Also for each $j\in S,$ $W_{\phi_{j}}\left(  \mathbf{H}\left(  \mathbf{e,}%
A_{j}\right)  \right)  $ does not have the interpolation property with respect
to $\Gamma$ since $\mu\left(  A_{j}\right)  <1$ for each $j\in S.$ Although
\[
\left\Vert {\sum\limits_{j\in S}}\phi_{j}\right\Vert ^{2}=1,
\]
we cannot say that $L\left(  \Gamma\right)  \left(  {\sum\limits_{j\in S}}%
\phi_{j}\right)  $ is a Parseval frame in
\[
\mathbf{H}\left(  \mathbf{e,E}^{\circ}\right)  =%
{\displaystyle\bigoplus\limits_{j\in S}}
\mathbf{H}\left(  \mathbf{e,}A_{j}\right)  .
\]
Suppose that we define the map $\lambda\mapsto\mathbf{u}_{\lambda}$ on
$\Sigma$ such that
\[
\mathbf{u}_{\lambda}=\left\{
\begin{array}
[c]{c}%
\left\vert \det B\left(  \lambda\right)  \right\vert ^{1/2}\chi_{E\left(
\lambda\right)  }\text{ if }\lambda\in\mathbf{E}^{\circ}\\
0\text{ if }\lambda\notin\Sigma-\mathbf{E}^{\circ}%
\end{array}
\right.
\]
where $E\left(  \lambda\right)  \subset%
\mathbb{R}
^{2}$ tiles $\mathbb{R}^{2}$ by $\mathbb{Z}^{2}$ and packs $\mathbb{R}^{2}$
by
\[
B\left(  \lambda\right)  ^{-tr}\mathbb{Z}^{2}=\left[
\begin{array}
[c]{cc}%
\dfrac{\lambda_{1}}{\alpha\lambda_{1}^{2}+\alpha\lambda_{2}^{2}} &
-\dfrac{\lambda_{2}}{\alpha\lambda_{1}^{2}+\alpha\lambda_{2}^{2}}\\
\dfrac{\lambda_{2}}{\alpha\lambda_{1}^{2}+\alpha\lambda_{2}^{2}} &
\dfrac{\lambda_{1}}{\alpha\lambda_{1}^{2}+\alpha\lambda_{2}^{2}}%
\end{array}
\right]  \mathbb{Z}^{2}.
\]
Finally, we define $\phi\in L^{2}\left(  N\right)  $ such that
\[
\mathcal{P}\phi\left(  \lambda\right)  =\frac{\mathbf{u}_{\lambda}%
\otimes\mathbf{e}_{\lambda}}{\sqrt{\left\vert \det B\left(  \lambda\right)
\right\vert }}.
\]
According to Theorem \ref{steps}, if for each $1\leq j,j^{\prime}%
\leq\mathrm{card}\left(  S\right)  ,$ $j\neq j^{\prime},$ for $\lambda
\in\left[  0,1\right)  ^{2},$ for arbitrary functions $\mathbf{f}%
,\mathbf{g}\in L^{2}\left(  \mathbb{R}^{2}\right)  ,$ and for distinct
$j,j^{\prime}\in S$
\[
\left(  \left\langle \mathbf{f},\pi_{\lambda-j}\left(  \gamma_{1}\right)
\mathbf{u}_{\lambda-j}\right\rangle \right)  _{\gamma_{1}\in\Gamma
_{\mathfrak{b}}\Gamma_{\mathfrak{a}}}\perp\left(  \left\langle \mathbf{g}%
,\pi_{\lambda-j^{\prime}}\left(  \gamma_{1}\right)  \mathbf{u}_{\lambda
-j^{\prime}}\right\rangle \right)  _{\gamma_{1}\in\Gamma_{\mathfrak{b}}%
\Gamma_{\mathfrak{a}}}\text{,}%
\]
then $W_{\phi}\left(  \mathbf{H}\left(  \mathbf{e,E}^{\circ}\right)  \right)
$ is a $\Gamma$-sampling space with sinc-type function $W_{\phi}\left(
\phi\right)  .$ Moreover, $W_{\phi}\left(  \mathbf{H}\left(  \mathbf{e,E}%
^{\circ}\right)  \right)  $ has the interpolation property as well.
\end{example}

\begin{acknowledgement}
Many thanks go to the anonymous referee for a very thorough reading of this
paper. His suggestions, remarks and corrections greatly improved the quality
of the work. I also dedicate this paper to the loving memory of my mother Olga.
\end{acknowledgement}


\begin{thebibliography}{9}                                                                                                %


\bibitem {Pete}P.G. Casazza, The Art of Frame Theory, Taiwanese Journal of
Math, Vol 4 (2) (2000) 129-202

\bibitem {Corwin}L. Corwin, F. Greenleaf, Representations of Nilpotent Lie
Groups and their Applications. Part I. Basic Theory and Examples, Cambridge
Studies in Advanced Mathematics, 18. Cambridge University Press, Cambridge, (1990)

\bibitem {Currey}B. Currey, A. Mayeli, A Density Condition for Interpolation
on the Heisenberg Group, Rocky Mountain J. Math. Volume 42, Number 4 (2012), 1135-1151

\bibitem {Folland}G. Folland, A Course in Abstract Harmonic Analysis, Studies
in Advanced Mathematics. CRC Press, Boca Raton, FL, 1995

\bibitem {Fuhr cont}H. F\"{u}hr, Abstract Harmonic Analysis of Continuous
Wavelet Transforms, Springer Lecture Notes in Math. 1863, (2005).

\bibitem {Han Yang Wang}D. Han and Y. Wang, Lattice Tiling and the Weyl
Heisenberg Frames, Geom. Funct. Anal. 11 (2001), no. 4, 742--758

\bibitem {oussa1}V. Oussa, Sinc Type Functions on a Class of nilpotent Lie
groups, to appear in Advances in Pure and Applied Mathematics, (2014)

\bibitem {oussa}V. Oussa, Bandlimited Spaces on Some 2-step Nilpotent Lie
Groups With One Parseval Frame Generator, to appear in Rocky Mountain Journal
of Mathematics

\bibitem {Pfander}G. Pfander, P. Rashkov, Y. Wang, A Geometric Construction of
Tight Multivariate Gabor Frames with Compactly Supported Smooth Windows, J.
Fourier Anal. Appl. 18 (2012), no. 2, 223--239. 42C15
\end{thebibliography}
\end{document}